 \received{\dots}{\dots}
\tikzset{%
 %% ordered sets
 element/.style={draw, shape=circle, fill=white, inner sep=1.5pt},
 order/.style={very thin},
 inclusion/.style={very thin},
 %% arrows
 arrow/.style={->, thin, >=to},
 map/.style={->, shorten >=5pt, shorten <=5pt, >=stealth'},
 unary/.style={->, shorten >=2pt, shorten <=2pt, >=stealth'},
 loopy/.style={->, shorten >=2pt, shorten <=2pt, >=stealth, min distance=15pt},
 %% arrow labelling
 auto}
\numberwithin{equation}{section}
\theoremstyle{plain}
\newtheorem{theorem}{Theorem}[section]
\newtheorem{lemma}[theorem]{Lemma}
\newtheorem{corollary}[theorem]{Corollary}
\theoremstyle{definition}
\newtheorem{assume}[theorem]{Basic Assumptions}
\newtheorem{remark}[theorem]{Remark}
\newtheorem{example}[theorem]{Example}
\newenvironment{newlist} % used
   {\begin{list}{}{\setlength{\labelsep}{0.25cm}
                   \setlength{\labelwidth}{0.65cm}
                      \setlength{\leftmargin}{0.9cm}}}
   {\end{list}}
\newenvironment{Dlist} % used
   {\begin{list}{}{\setlength{\labelsep}{0.1cm}
                   \setlength{\labelwidth}{0.85cm} 
                   \setlength{\leftmargin}{0.90cm}}} 
   {\end{list}}
\newenvironment{rmlist} % used
   {\begin{list}{}{\setlength{\labelsep}{0.15cm}
                   \setlength{\labelwidth}{0.55cm}
                   \setlength{\leftmargin}{0.65cm}}}
   {\end{list}}
\newenvironment{iiilist} % used
   {\begin{list}{}{\setlength{\labelsep}{0.15cm}
                   \setlength{\labelwidth}{0.62cm}
                   \setlength{\leftmargin}{0.66cm}}}
   {\end{list}}
\newenvironment{alist} % used
   {\begin{list}{}{\setlength{\labelsep}{0.2cm}
                   \setlength{\labelwidth}{0.45cm}
                   \setlength{\leftmargin}{0.55cm}}}
   {\end{list}}
\newenvironment{Ilist} % used
   {\begin{list}{}{\setlength{\labelsep}{0.25cm}
                   \setlength{\labelwidth}{0.65cm}
                   \setlength{\leftmargin}{0.9cm}}}
   {\end{list}}
\newcommand{\cat}[1]{\boldsymbol{\mathscr{#1}}}
\newcommand{\CA}{{\cat A}}
\newcommand{\CB}{\cat B}
\newcommand{\CY}{\cat Y}
\newcommand{\CP}{\cat P}
\newcommand{\CCD}{\cat D}
\newcommand{\CQ}{\cat Q}
\newcommand{\D}[1]{\mathrm D(#1)}
\newcommand{\E}[1]{\mathrm E(#1)}
\newcommand{\ED}[1]{\mathrm {ED}(#1)}
\newcommand{\DE}[1]{\mathrm {DE}(#1)}
\newcommand{\CX}{\cat X}
\newcommand{\CS}{\cat S}
\newcommand{\CO}{\cat O}
\newcommand{\sub}[1]{_{_{\kern-.9pt{\scriptstyle #1}}}}
\newcommand{\medsub}[2]{#1\lower0.6ex\hbox{$\scriptstyle{#2}$}}
\newcommand{\eA}[1]{\medsub e {\kern-0.75pt\A\kern-0.75pt}(#1)}
\newcommand{\esub}[1]{\medsub e {\kern-0.75pt #1 \kern-0.75pt}}
\newcommand{\epsub}[1]{\medsub \varepsilon {\kern-1.25pt #1}}
\newcommand{\esubA}{\medsub e {\kern-0.75pt\A\kern-0.75pt}}
\newcommand{\twiddle}[1]{{\smash{\underset{\raise.4ex\hbox{$\smash\sim$}}
                       {\mathbf{#1}}}\vphantom{\underline{\mathbf{#1}}}}}
\newcommand{\stwiddle}[1]{{\smash{\underset{\raise.3ex\hbox{$\scriptstyle\smash\sim$}}{\mathbf{#1}}}\vphantom{\underline{\underline{\mathbf{#1}}}}}}
\newcommand{\MT}{\twiddle M}
\newcommand{\NT}{\twiddle N}
\newcommand{\ST}{\twiddle S}
\newcommand{\WT}{\twiddle W} 
\newcommand{\DT}{\twiddle D}
\newcommand{\twoBf}{\mathbf 2}
\newcommand{\twoT}{\twiddle 2}
\newcommand{\A}{\mathbf A}
\newcommand{\B}{\mathbf B} %bd to here
\newcommand{\C}{\mathbf C}
\newcommand{\N}{\mathbf N}
\newcommand{\Dalg}{\mathbf D}
\newcommand{\M}{\mathbf M}
\renewcommand{\S}{\mathbf S}
\newcommand{\V}{\mathbf V} 
\newcommand{\W}{\mathbf W} 
\newcommand{\X}{\mathbf X}
\newcommand{\Y}{\mathbf Y}
\newcommand{\Tp}{\mathscr{T}}
\DeclareMathOperator{\Clo}{Clo}
\DeclareMathOperator{\End}{End}
\DeclareMathOperator{\graph}{graph}
\newcommand{\Omegamax}[2]{\mathop{\mathrm{max}_{#1}}\Omega^{-1}\hspace*{-1.0pt}(#2)}
\newcommand{\omegamax}[2]{\mathop{\mathrm{max}_{#1}}\{\omega\}^{-1}\hspace*{-1.0pt}(#2)}
\DeclareMathOperator{\dom}{dom}
\DeclareMathOperator{\id}{id}
\newcommand{\ISP}{\mathbb{ISP}}
\newcommand{\IScP}{{\mathbb{IS} _{\mathrm{c}} \mathbb{P}^+}}
\newcommand{\lee}{\le}  
\renewcommand{\leq}{\leqslant}
\renewcommand{\geq}{\geqslant}
\newcommand{\rest}[1]{{\upharpoonright}_{#1}}
\newcommand{\bigcupPhi}{\bigcup_{\omega\in\Omega} \Phi_\omega^\A(\CA(\A, \M))} 
\newcommand{\bigcupPsi}{\bigcup_{\omega\in\Omega} \Psi_\omega^\X(\CX(\X, \MT))}
\begin{document}

%%%%%%%%%%%%%%%%%%%%%%%%%%%%%%%%%%%%%%%%%%%%%%%%%%%%%%%%%%%%%%%%%%%
%%  FRONT MATTER                                                 %%
%%%%%%%%%%%%%%%%%%%%%%%%%%%%%%%%%%%%%%%%%%%%%%%%%%%%%%%%%%%%%%%%%%%%%
%%%%%%%%%%%%%%%%%%%%%%%%%%%%%%%%%%%%%%%%%%%%%%%%%%%%%%%%%%%%%%%%%%%

\title[Piggyback dualities revisited]{Piggyback dualities revisited}

%% First author (Note: The order of the items here is important!)
\author[B. A. Davey]{B. A. Davey} 
\email{B.Davey@latrobe.edu.au}
\address{Department of Mathematics and Statistics, La Trobe University, Victoria 3086, Australia}

%% Second author (Note: The order of the items here is important!)
\author[M. Haviar]{M. Haviar}
\email{miroslav.haviar@umb.sk}
\address{Faculty of Natural Sciences, Matej Bel University, Tajovsk\'eho 40, 974 01 Bansk\'{a} Bystrica, Slovak Republic}

%% Third author (Note: The order of the items here is important!)
\author[H. A. Priestley]{H. A. Priestley}
\email{hap@maths.ox.ac.uk}
\address{Mathematical Institute, University of Oxford, Radcliffe Observatory Quarter, Oxford OX2 6GG, United Kingdom}

%% Thanks (Optional) %bd3112 Used to name the file
\thanks{The second  author acknowledges support from Slovak grants VEGA 1/0212/13 and APVV-0223-10.}

%% Dedication (Optional)
\dedicatory{Dedicated to the memory of Ervin Fried and Jiri Sichler}

%% AMS subject classification; see http://www.ams.org/msc
%% Only one Primary. Possibly several Secondary.
\subjclass[2010]{Primary: 08C20; %natural duality theory
Secondary: 06D50, 	% Lattices and duality
06A12} 				% Semilattices
%% Keywords and phrases
\keywords{natural duality, piggyback duality, distributive lattice, semilattice, Ockham algebra}

\begin{abstract}
In natural duality theory, the piggybacking technique is a valuable tool
for constructing dualities. As originally devised by Davey and Werner, and extended  by Davey and Priestley, it can be applied to finitely generated quasivarieties of algebras having term-reducts in a quasivariety for which a well-behaved natural duality is already available. This paper presents a comprehensive study  of the method in a much wider setting: piggyback duality theorems are obtained for suitable prevarieties of structures.  For the first time, and within this extended framework, piggybacking is used to derive  theorems giving criteria for establishing strong dualities and two-for-one  dualities. The general theorems specialise in particular to the familiar situation in which we piggyback on Priestley duality for distributive lattices or Hofmann--Mislove--Stralka duality for semilattices, and
many well-known dualities are thereby subsumed. A selection of new dualities is also presented.
\end{abstract}

\maketitle

%%%%%%%%%%%%%%%%%%%%%%%%%%%%%%%%%%%%%%%%%%%%%%%%%%%%%%%%%%%%%%%%%%%
%%  MAIN MATTER                                                  %%
%%%%%%%%%%%%%%%%%%%%%%%%%%%%%%%%%%%%%%%%%%%%%%%%%%%%%%%%%%%%%%%%%%%
%%%%%%%%%%%%%%%%%%%%%%%%%%%%%%%%%%%%%%%%%%%%%%%%%%%%%%%%%%%%%%%%%%%%%

%%%%%%%%%%%%%%%%%%%%%%%%%%%%%%%%%%%%%%%%%%%%%%%%%%%%%%%%%%%%%%%%%%%%%
\section{Introduction}\label{sec:intro}
%%%%%%%%%%%%%%%%%%%%%%%%%%%%%%%%%%%%%%%%%%%%%%%%%%%%%%%%%%%%%%%%%%%%%

%%%%%%%%%%%%%%%%%%%%%%%%%%%%%%%%%%%

This paper gives a systematic, general  treatment  of the method of piggybacking in the context of natural dualities for structures.  The principal results
are Theorems~\ref{thm:pig-general} and~\ref{thm:copig-general}.  These subsume and extend previous uses of the piggybacking technique. For the first time, piggybacking is used to derive strong dualities, albeit under stringent  conditions; see Theorems~\ref{thm:pigstrong1} and~\ref{thm:two4one}.

In broad terms, duality theory seeks to use one category, $\CX$, to reason
about another, $\CA$, with the categories linked by a dual adjunction
or,  better,  by a dual equivalence.
The more tightly the two categories are linked, the more powerful this general strategy will be. Specifically, assume we have contravariant functors $\mathrm{D} \colon \CA \to \CX$ and $\mathrm {E} \colon \CX\to \CA$ and that there exist a unit and counit $e$ and $\varepsilon$ so that $\langle\mathrm{D},\mathrm{E},e, \varepsilon\rangle$ is a dual adjunction.
   An aspect of duality theory that has proved particularly
fruitful is that in which  $\mathrm{D} $ and $\mathrm{E}$ are hom-functors, with
$\mathrm{D} = \CA(-,\M)$ and $\mathrm{E} = \CX(-,\MT)$, where $\M \in \CA$ and
$\MT \in \CX$ are objects with the same underlying set $M$.
Within this very general categorical framework, duality theory
as a tool for algebra has a special niche.  The theory of natural dualities,
in which $\M$ is taken to be an algebra, usually finite, has been developed
to a high level, as was already evident in the 1998 text by Clark and Davey \cite{NDftWA}.  Subsequent advances have featured dualisability
(as witnessed by the monograph \cite{PD05}) and the theory of full dualities and strong dualities (\cite[Chapter~3]{NDftWA} and~\cite{DPW11}). We shall strike out in a different direction. As we have just indicated, the existing core theory of natural dualities focuses on dual representations for \emph{algebras}, specifically algebras in a finitely generated quasivariety. But, while this restricts $\CA$ to be drawn from a very important class of categories,  it is possible to  encompass structures more general than algebras and in certain circumstances to remove  the requirement of finite generation.

The study of natural dualities for finitely generated quasivarieties of \emph{structures}  was initiated by Davey \cite{D06} (and Hofmann~\cite{H02} had earlier considered the  more general setting of finitary limit sketches).
In the present  paper we fill in more of the overall  duality picture in the setting of structures. In  a not unrelated development, the present authors considered situations in which the topology could be moved from one side of a dual adjunction to the other, thereby creating pairs of dualities in partnership~\cite{DHP12}; our Theorem~\ref{thm:two4one} can be seen as  a piggyback-based topology-swapping theorem.  Motivation for consideration of paired dualities came in part
from investigation of  canonical extensions for lattice-based algebras (see \cite{DHP12,DGHP}). More recently, we have shown how these ideas fit into the  broader framework of free constructions which can be viewed as zero-dimensional Bohr compactifications of structures \cite{DHP}.

 The authors of \cite{NDftWA} took a deliberate decision to restrict their treatment to finitely generated quasivarieties (of algebras).
It was already recognised in \cite{DW83} that finite generation is not a necessary condition for a natural duality to exist but, 30 years on, little general theory has been developed and non-finitely generated examples remain tantalisingly scarce: abelian groups (Pontryagin \cite{Pon34}); Ockham algebras \cite{G83,DW85,DW83PB}; certain semilattice-based algebras~\cite[Section~8]{DJPT07}. In  this paper  we  operate within a framework in which we do not restrict to generating structures which are finite.  As a consequence,  topological conditions arise which are absent in the finitely generated  setting. Our main results, as presented in Section~\ref{sec:results}, require the structures under consideration to be total, that is, to contain no partial operations. Nevertheless, the general setting of infinite generating structures is a new departure,  and we shall allow for partial operations where there is no reason to exclude them.
%%%%%%%%%%%%

 Most   importantly, we must comment on the role of Davey and Werner's piggybacking technique  \cite{DW85,DW83PB} and its subsequent evolution.  The fundamental idea is very simple. Consider a prevariety $\CB= \ISP(\N)$ for which a well-behaved
full duality is already available---prototypical examples would be~$\CCD$, the variety of bounded distributive lattices, with Priestley duality, and~$\CS$, the variety of unital  meet semilattices, with Hofmann--Mislove--Stralka duality.
Take an algebra $\M$ generating a prevariety $\CA =\ISP(\M)$ for which a
duality is sought.  Assume that $\M$ has a term-reduct $\M^\flat$ in
$\CB$. Seek to hitch a piggyback ride: use this known duality for $\CB$ to build the required duality for $\CA$, using a carrier map $\omega \in \CB(\M^\flat,\N)$
to link the categories involved. As Davey and Werner showed, this idea allowed the  construction of economical dualities, in situations where the existence of a duality was not in question but where general theory (in particular the NU Duality Theorem, where applicable) supplied dual structures too unwieldy to be of practical use.
They also showed how the method had the potential to establish dualisability
for certain non-finitely generated varieties, as witnessed by their treatment of
the variety of Ockham algebras.  Later, Davey and Priestley \cite{DP87} enlarged
the scope of the method  by  allowing a set of carrier maps  in cases where
a single map~$\omega$ does not provide tight enough linkage between~$\CA$ and~$\CB$
for  Davey and Werner's single-carrier piggyback theorem to apply.  More details
on the general method can be found in Section~\ref{sec:results} below.

We want to highlight one aspect of our piggyback theorems, namely the conditions  we supply which ensure that dualities are \emph{strong}. This is a  completely new feature within the   piggybacking framework, and its development is made possible by  our consideration of co-dualities, in which, loosely,  the roles of~$\M$ and~$\MT$ are swapped. This in turn hinges on the symmetry inherent in our allowing~$\M$
to be a structure rather than an algebra;  for an algebra~$\M$, a  dualising alter ego $\MT$ (with topology disregarded) will rarely be an algebra. We draw attention to our Piggyback Strong Duality Theorem~\ref{thm:pigstrong1} in which we identify   conditions under which single-carrier piggybacking yields a double best-of-all-possible-worlds scenario: we have simultaneously a duality and  a co-duality,   both strong.  It was observed long ago,  for certain well-known varieties of $\CCD$--based algebras, most notably De Morgan algebras and Stone algebras, that a natural duality (obtainable by single-carrier piggybacking) `coincides' with Priestley duality.   It is this phenomenon and its co-duality analogue that are witnessed by  the specialisation of   Theorem~\ref{thm:pigstrong1}   to $\CCD$-based algebras,
Theorem~\ref{thm:DLfullpig1}. We draw attention, too, to an immediate consequence of Theorem~\ref{thm:pigstrong1}, the Two-for-one Piggyback Strong Duality Theorem \ref{thm:two4one}.  Here the duality and co-duality are dualities in partnership, as in \cite{DHP12}, with each obtained from the other simply by topology-swapping.
Theorem~\ref{ex:Ockham} provides a nice application, in which moreover the generating algebra is infinite: we give a purely piggyback-based proof of Goldberg's 
duality for the variety of Ockham algebras, which is strong,  and show that it is in partnership with a strong co-duality.

%%%%%%%%%%%%%%%%%%%%%%%%%%%%%%%%%%%%%%%%%%%%%%%%%%

The paper is organised in the following way.  In Section~\ref{sec:scene}
we set the scene for what follows.    Here, and subsequently, we shall
assume that the reader is familiar with the basic theory of natural dualities
as presented in \cite{NDftWA} and shall focus on  aspects of the theory
not to be found there, including some notions we introduce \textit{ab initio}.
Section~\ref{sec:results}  begins with a presentation of our Basic
Assumptions---the framework within which we establish our principal results.
These results are then set out,  prefaced by general comments on the piggybacking strategy. The proofs are postponed to Section~\ref{sec:proofs} and modularised
via a series of lemmas.  That section begins with a summary of the overall strategy and of the contribution made by  the various  lemmas. In Section~\ref{sec:proofs}, as in Section~\ref{sec:scene}, we shall where possible allow partial operations locally. 

Sections~\ref{sec:DL} and~\ref{sec:SL} do not rely explicitly on the technical material in Section~\ref{sec:proofs}. They  specialise the main  results to the two settings---$\CCD$-based algebras and $\CS$-based algebras---in which piggybacking has hitherto principally been employed. Here we have two objectives: to demonstrate how previous applications of piggybacking fit into a wider framework and to present new results, specifically on co-dualities and concerning  prevarieties with infinite generators.

We wish also to emphasise what the present paper does not cover.  Our examples
are confined to  applications  of our theorems to  piggybacking over~$\CCD$
and  over $\CS$. Piggybacking over other amenable base categories is not explored here (though we note an overture in this direction made by Cabrer and Priestley \cite{CPTWO}, using piggybacking over the variety of distributive bilattices).
There is also scope for a more comprehensive investigation of natural dualities in  the non-finitely generated case, and associated examples:  the theory here is as yet embryonic. We have addressed multi-carrier piggybacking for prevarieties
but not full-blown multisorted dualities; however we would not foresee  obstacles to  extending our theory to the multisorted case.  In conclusion, we  assert that  our achievements  in this paper  should be seen as meeting  interim  objectives rather than final ones.   Further work is expected to reveal additional insights and to contribute new examples.

%%%%%%%%%%%%%%%%%%%%%%%%%%%%%%%%%%%%%%%%%%%%%%%%%%%%%%%%%%%%%%%%%%%%%
\section{Setting the scene}  \label{sec:scene}
%%%%%%%%%%%%%%%%%%%%%%%%%%%%%%%%%%%%%%%%%%%%%%%%%%%%%%%%%%%%%%%%%%%%%

In this section we introduce the setting in which we shall work.
A basic reference for natural duality theory for structures, rather than algebras,
is Davey \cite{D06}. A  discussion of the notions introduced below can be found there.

We shall want to consider classes of structures of the form $\ISP(\M)$, where~$\M$ is a structure of a suitable kind and $\M$ is not necessarily finite.  We refer to $\ISP(\M)$ as the  \emph{prevariety} generated by~$\M$; only when $\M$ is finite is this prevariety guaranteed to be definable by quasi-equations, and so to be a quasivariety according to the normal usage of this term. We regard the prevariety $\ISP(\M)$ as a category by taking as morphisms all structure-preserving maps.
The structures we shall consider take the form $\M = \langle M; G,H,R\rangle$.  Here $G$ and~$H$ are sets of, respectively, total and partial operations on $M$, of finite arities, and~$R$ is a set of finitary relations on~$M$. We assume that the relations in $R$ and the domains of the partial operations in $H$ are non-empty. Any of $G$, $H$ and $R$ may be empty. We say that $\M$ is a \emph{total structure} if $H=\varnothing$,  that it is a \emph{total algebra}  if $H=R=\varnothing$ and
that it is \emph{purely relational} if $G = H = \varnothing$. Given a set $H$ of partial operations on a set~$M$, we define
\[
\dom(H) \coloneqq \{\, \dom(h)\mid h\in H\,\} \text{ and } \graph(H) \coloneqq  \{\, \graph(h)\mid h\in H\,\}.
\]
We note that in certain contexts (see \cite[pp.~40--41]{NDftWA} for more details) it is convenient to replace the members of $G \cup H$ by their graphs.

Let $\M =\langle M; G, H, R\rangle$ be a structure.   Define $\CA = \ISP(\M)$ to be the prevariety generated by $\M$ and let $\A\in \CA$. We say that a subset $X$ of $\CA(\A, \M)$ \emph{separates the structure} $\A$ if the following equivalent conditions hold:
\begin{enumerate}[(1)]

\item the natural map $\eta \colon A \to M^X$, given by $\eta(a)(x) \coloneqq  x(a)$, for all $a\in A$ and all $x\in X$, is an embedding of $\A$ into $\M^X$;

\item

\begin{rmlist} 

\item[(i)] 
for all $a,b\in A$ with $a\ne b$, there exists a morphism $x\in X$ with $x(a)\ne x(b)$, and

\item[(ii)]  
for every ($n$-ary) relation $r$ in $\dom(H^\A)\cup R^\A$ and all $a_1, \dots, a_n$ in $A$ with $(a_1, \dots, a_n) \notin r^\A$, there exists a morphism  $x\in X$ with $(x(a_1), \dots, x(a_n)) \notin r^{\M}$.
\end{rmlist}

\end{enumerate}
Note that $\CA(\A, \M)$ separates the structure $\A$ as $\A\in \ISP(\M)$, and
if $\M$ is a total algebra then a subset $X$ of $\CA(\A, \M)$ separates the structure $\A$ if and only if (2)(i) holds, that is, $X$ separates the points of $A$.

We shall also wish to consider structures-with-topology
of the form $\MT =  \langle M; G,H,R,\Tp\rangle$.   Here $\langle M; G,H,R\rangle $ will be assumed to be a structure of the same type as we considered above, and $\Tp$ is a compact Hausdorff topology on~$M$---a priori, no compatibility is assumed between the structure and the topology. If~$M$ is finite then $\Tp$ is necessarily discrete. We define $\CX \coloneqq  \IScP(\MT)$ to be the class of  all
structures-with-topology $\X$ of the same type as $\MT$ for which $\X$ is isomorphic to a closed substructure of a non-zero power of~$\MT$. Note that the empty structure-with-topology  $\boldsymbol\varnothing$ belongs to $\CX$ in case there are no nullary operations in the type of~$\MT$. Relations and total and partial operations are lifted pointwise from $\MT$ to any member $\X$ of $\CX$, and the domain of such a lifted map is indicated by the appropriate superscript.
We regard $\CX$ as a category by taking as morphisms all continuous structure-preserving maps. Given a structure-with-topology $\MT$, let $\M'$ be the structure obtained by removing the topology.  When properties  relating to structure are said to hold for  $\MT$ we shall mean that they  are true in $\M'$.

We now need to clarify what is meant by saying that a structure with topology
is an alter ego of a structure on the same underlying set $M$. Here we must extend to the case that~$M$ is not necessarily finite the notion of compatible structures introduced by Davey in \cite{D06}. Let $\M \coloneqq \langle M; G_1, H_1, R_1\rangle$ and $\M' \coloneqq \langle M; G_2, H_2, R_2\rangle$ be structures on the
set $M$. We say that $\M'$ \emph{is compatible with} $\M$ if, for all $n\in \mathbb N$, each $n$-ary relation in $\dom(H_2)\cup R_2$ forms a substructure of $\M^n$ and each operation in $G_2\cup H_2$ is a homomorphism with respect to~$\M$.  It is a symbol-pushing exercise to show that $\M'$ is compatible with $\M$ if and only if $\M$ is compatible with $\M'$. Given a compact Hausdorff topology $\Tp$ on $M$, we say that $\M=\langle M; G_1, H_1, R_1\rangle$ and $\MT =\langle M; G_2, H_2, R_2, \Tp\rangle$ are \emph{compatible} if
\begin{enumerate}[(a)]

\item 
$\M$ and $\M'$ are compatible, and

\item  
each $n$-ary relation in $\dom(H_1)\cup R_1$ is topologically closed  with respect to the topology $\Tp$ and each operation in $G_1\cup H_1$ is continuous with respect to the topology $\Tp$, that is, $\M_\Tp \coloneqq \langle M; G_1, H_1, R_1, \Tp\rangle$ is a \emph{topological structure}.

\end{enumerate}
A more compact, though less revealing, way to say that $\M$ and $\MT$ are compatible is to require that, for all $n\in \mathbb N$, each $n$-ary relation in $\graph(G_1\cup H_1)\cup R_1$ forms a topologically closed substructure of~$\MT^n$. If $\M$ and $\MT$ are compatible, then we say that $\MT$ is an \emph{alter ego} of~$\M$.

At the finite level, because the topology on $M$ is discrete, we can swap the topology to the other side, that is, $\langle M; G_1, H_1, R_1, \Tp\rangle$ is an alter ego of $\langle M; G_2, H_2, R_2\rangle$ provided $\langle M; G_2, H_2, R_2, \Tp\rangle$ is an alter ego of $\langle M; G_1, H_1, R_1\rangle$. This symmetry fails in general if $M$ is infinite.

Given a structure $\M$ and an alter ego $\MT$ for~$\M$, we can set up a dual adjunction $\langle \mathrm D, \mathrm E, e, \varepsilon\rangle$ between $\CA \coloneqq  \ISP(\M)$  and~$\CX= \IScP(\MT)$. The claims below are taken from \cite{D06}. However, their verification is straightforward (in the case that $\M$ is a finite total  algebra we refer to~\cite[Section~1.5]{NDftWA} or to~\cite[Section~1.3]{PD05} for the details). We define contravariant hom-functors $\mathrm D \colon\CA \to \CX$ and $\mathrm E \colon  \CX \to \CA$ as follows:
\begin{alignat*}{3}
\text{on objects}& \qquad\qquad & \D\A &= \CA(\A,\M), & \qquad\qquad \text{\phantom{on objects}}& \\
\text{on morphisms}& & \D f &= - \circ f; & \text{\phantom{on morphisms}}& \\
\shortintertext{and}
\text{on objects}& & \E \X &= \CX(\X,\MT), & \text{\phantom{on objects}}& \\
\text{on morphisms}& & \E\psi &= - \circ \psi. & \text{\phantom{on morphisms}}
\end{alignat*}
The well-definedness of these functors is a consequence of our compatibility assumption. For each structure $\A\in \CA$, the hom-set $\CA(\A,\M)$  forms a closed substructure of $\MT^A$ and so $\mathrm {D}(\A)$  (the \emph{dual of} $\A$) is a member of~$\CX$. Likewise, for each structure $\X\in \CX$, the set $\CX(\X,\MT)$   forms a substructure $\E\X$ of $\M^X$  and so $\E \X$ (the \emph{dual of}~$\X$) is a member of~$\CA$.

For each $\A\in \CA$ and each $\X\in \CX$, we define the \emph{evaluation maps}
 \[
\esubA  \colon\A \to \ED \A \quad \text{and} \quad \epsub \X \colon \X \to
\DE \X
 \]
by $\eA a(x) \coloneqq  x(a)$, for all $a\in A$ and all $x\in \CA(\A,\M)$,
and $\epsub \X (x)(\alpha) \coloneqq  \alpha(x)$, for all $x\in X$ and all
$\alpha\in \CX(\X,\MT)$. Then $\langle \mathrm D, \mathrm E, e, \varepsilon\rangle$  is a dual adjunction between $\CA$ and~$\CX$ and $e \colon \id_{\CA} \to \mathrm{ED}$ and $\varepsilon \colon \id_{\CX} \to \mathrm{DE}$ are natural transformations. Moreover, the construction of $\CA$ and $\CX$ via $\ISP$ and $\IScP$, respectively, ensures that the maps $\esubA  \colon  \A \to \ED \A$ and $\epsub \X \colon \X \to \DE \X$ are embeddings, for all $\A\in \CA$ and all $\X\in \CX$. We note that here an \emph{embedding} in $\CA$ means `isomorphism onto a substructure' while in $\CX$ it means `isomorphism onto a topologically closed substructure'.

The following definitions mimic those given in \cite{NDftWA} for the special case in which $\M$ is a finite total algebra. The alter ego $\MT$ \emph{yields a duality on~$\CA$},  or more briefly $\MT$ \emph{dualises $\M$}, if the map $\esub \A$ is
an isomorphism, for all $\A\in \CA$. If the natural map $\epsub\X \colon \X \to \DE \X$ is an isomorphism for all $\X\in\CX$, then we say that $\MT$ \emph{yields a co-duality on $\CA$}, or $\MT$ \emph{co-dualises} $\M$; if the emphasis is on the structure with topology, we say that $\M$ \emph{yields a duality on $\CX$}, or $\M$ \emph{dualises}~$\MT$. The alter ego $\MT$ \emph{yields a full duality on~$\CA$}, or more briefly $\MT$ \emph{fully dualises $\M$}, if it yields both a duality and a co-duality on~$\CA$. In this case the functors $\mathrm D$ and $\mathrm E$ give a dual equivalence between the categories $\CA$ and~$\CX$.

While the notions of dualising and fully dualising alter ego
parallel exactly those given in \cite{NDftWA},  more care is needed when  extending from algebras to structures the concept of a strong duality as presented
in~\cite[Chapter~3]{NDftWA}. We recall that, in the restricted setting,  proving that a duality is strong has been the primary tool for establishing that the duality is full. For structures in general, there are two competing definitions for a full duality yielded by an alter ego~$\MT$ to  be a strong duality:
\begin{enumerate}

\item[(1)]  
every closed substructure of a non-zero power of $\MT$ is hom-closed, or equivalently is term-closed, and

\item[(2)]
$\MT$ is injective in the category~$\CX$ (with respect to embeddings).
\end{enumerate}
Fortunately, the two definitions coincide when~$\M$ is a total structure, as it always will be in our theorems. Accordingly,  we shall say that $\MT$ \emph{yields a strong duality on~$\CA$}, or that $\MT$ \emph{strongly dualises $\M$}, if~(2) holds.
(If   partial operations are permitted in the type of~$\M$, then (2) is too strong, and (1) is the more appropriate definition. See the discussion in \cite[Section~4.2]{DPW11}.)

 So far, the definitions we have given are those applicable to duality theory for structures.  They are not specific to dualities of piggyback type.  Our principal theorems involve extensions to a much wider setting of conditions which underpin the piggybacking method for prevarieties of algebras.  We now introduce the requisite definitions, allowing also for the topological  versions we shall need in order to obtain both duality and co-duality theorems.

Given a structure $\M$ we denote the set of \emph{total} unary term functions of $\M$ by $\Clo_1(\M)$. (Note that a total unary term function of $\M$ may result from composing  operations and partial operations in the type of~$\M$.) If $\MT$ is an alter ego of $\M$, then the compatibility between $\M$ and $\MT$ guarantees that $\Clo_1(\MT) \subseteq \End(\M)$ and $\Clo_1(\M) \subseteq \End(\MT)$. The compatibility also guarantees that the dual of the free structure in $\CA$ on one generator is isomorphic to $\MT$; it follows that if $\MT$ dualises $\M$, then 
$\Clo_1(\M) = \End(\MT)$---see the discussion in~\cite[p.~13]{D06}. If $\MT$ fully dualises $\M$, then the topologically closed substructure of $\MT^M$ generated by $\Clo_1(\MT)$ is $\End(\M)$ (so $\Clo_1(\MT) = \End(\M)$ if $M$ is finite)---the proof is a simple generalisation of that given for the finite case in~\cite[Proposition~4.3]{DHW05};  see also \cite[Corollary~3.9]{DHW05b} and \cite[Lemma~4.3]{DPW11}.

We next extend to structures the idea of a term-reduct of a total algebra.
For further details see~\cite{DPW11}. Let $\M = \langle M; G, H, R\rangle$ be a structure. For $n\in \mathbb N$, an $n$-ary relation $r$ on $M$ is \emph{conjunct-atomic definable from $\M$} if
\[
r = \{\, (a_1, \dots, a_n) \in M^n \mid \M \models \textstyle\bigwith\limits_{i=1}^m \alpha_i(a_1,\dotsc, a_n)\,\},
\]
where $v_1,\dotsc,v_n$ are distinct variables and each $\alpha_i(v_1,\dotsc, v_n)$ is an atomic formula in the language $\langle G, H, R\rangle$ involving variables from the set $\{v_1,\dotsc,v_n\}$. A structure $\M' = \langle M; G', H', R'\rangle$ is a \emph{structural reduct} of $\M$ if  each relation in $\dom (H')\cup R'$ is conjunct-atomic definable from $\M$, each $g\in G'$ belongs to the enriched partial clone of~$\M$ and each $h\in H'$ has an extension in the enriched partial clone of~$\M$; see~\cite[Lemma~2.5]{DPW11}. Now assume that $\M$ has a structural reduct $\M^\flat$ in a prevariety~$\CB \coloneqq \ISP(\N)$, where $\N =\langle N; G^\nu, H^\nu, R^\nu\rangle$ is some structure.  Then every structure $\A$ in $\CA = \ISP(\M)$ also has a structural reduct $\A^\flat = \langle A; G^\nu, H^\nu, R^\nu\rangle$ in $\CB$.
We can define the structure $\A^\flat$ in two equivalent ways: either syntactically via the conjunct-atomic formulas and terms that define $\M^\flat$ from $\M$ or semantically via the embedding of $\A$ into a power of $\M$. Either way, we have $\A^\flat \in \ISP(\M^\flat)$. When we say that \emph{$\MT = \langle M; G, H, R, \Tp\rangle$ has a structural reduct $\MT^\flat = \langle M; G^\nu, H^\nu, R^\nu, \Tp\rangle$ in $\IScP(\NT)$}, we mean that $\M' \coloneqq  \langle M; G^\nu, H^\nu, R^\nu\rangle$ is a structural reduct of $\langle M; G, H, R\rangle$ and $\MT^\flat$ belongs to $\IScP(\NT)$---it is not sufficient to know that $\M'$ belongs to $\ISP(\N')$, where $\N'$ denotes $\NT$ minus its topology. If $\MT$ has a structural reduct in $\IScP(\NT)$, then every member $\X$ of $\IScP(\MT)$ has a structural reduct $\X^\flat$ in~$\IScP(\NT)$. In fact, in the non-topological case, $^\flat$ is a functor from $\ISP(\M)$ to $\ISP(\N)$, and in the topological case, $^\flat$ is a functor from $\IScP(\MT)$ to $\IScP(\NT)$. 

Assume that  the structure  $\NT$ is an alter ego of the structure $\N$ and that $\NT$  is injective in~$\CY\coloneqq  \IScP(\NT)$. (In fact, injectivity at the finite level would suffice.)  The fact that $\N$ and $\NT$ are compatible guarantees that $\N$ is structurally equivalent to a total structure, and may without loss of generality be taken to be a total structure. This observation  justifies  an assumption we make in the next section.
%%%%%%%%%%%%%%%%%%%%%%%%%

The notion of entailment will be important in our theory, as it is in the setting of quasivarieties of  algebras \cite[Section~2.4 and Chapter~8]{NDftWA}. Assume that $\MT$ is an alter ego of a structure $\M$ and define $\CA \coloneqq \ISP(\M)$ and $\CX \coloneqq \IScP(\MT)$. Let $r\subseteq M^n$, for some $n\in \mathbb N$. We say that \emph{$\MT$ entails $r$} if $r$ forms a substructure of $\M^n$ and, for all $\A\in \CA$, every $\CX$-morphism $\alpha \colon \CA(\A, \M) \to \MT$ preserves $r$. Likewise, we say that \emph{$\M$ entails $r$} if $r$ forms a topologically closed substructure of $\MT^n$ and, for all $\X\in \CX$, every $\CA$-morphism $u \colon \CX(\X, \MT) \to \M$ preserves $r$.  In connection with entailment we shall encounter the diagonal $\Delta_M \coloneqq  \{\, (a,a) \mid a \in M\,\}$, which forms a substructure of $\M^2$.
 
Those familiar with traditional piggybacking will find the next  definition
unsurprising. Let $\M$ be a structure, let $N$ be a set, and let $r\subseteq N^n$, for some $n\in \mathbb N$. For $\omega_1, \dots, \omega_n\colon M \to N$,  define
\[
(\omega_1,\dots, \omega_n)^{-1}(r) \coloneqq 
\{\, (a_1,\dots, a_n) \in M^n  \mid (\omega_1 (a_1), \dots, \omega_n (a_n))\in r\,\}.
\]
Given $\Omega\subseteq N^M$, define
\begin{multline*}
\Omegamax \M r : = \{\, s \subseteq M^n \mid \mathbf s \le \M^n \text{ with $s$ maximal in }\\ (\omega_1,\dots, \omega_n)^{-1}(r) \text{ for some } \omega_1,\dots, \omega_n\in \Omega\,\}
\end{multline*}
(here, and subsequently,  $\le$ denotes `is a substructure of').

\begin{remark} \label{rem:rnotinDeltaM}
As above, let $\omega $ be a map from~$M$ to~$N$. We shall encounter in the statements of certain of our theorems various entailment conditions relating to relations contained in $\ker(\omega) = (\omega,\omega)^{-1} (\Delta_N)$. Since $\Delta_M \subseteq \ker(\omega)$ and $\Delta_M$ forms a substructure of $\M^2$,  we deduce that if $r\subseteq\Delta_M$ and $r$ forms a substructure of $\M^2$ that is maximal in $\ker(\omega)$, then $r= \Delta_M$ and $\M$ entails~$r$ trivially.
Similar considerations apply when $\M$ is replaced by~$\MT$.  It follows that,
in all occurrences of  entailment conditions involving binary relations maximal in $\ker(\omega)$, we may without loss of generality restrict to relations not contained in $\Delta_M$.
\end{remark}
%%%%%%%%%%%%%%%%%%%%%%%%%%%%%%%%%%%%%%%%%%%%%%%%%%%%%

For a structure $\MT$ with a topology, we define
\[
\Omegamax {\stwiddle M} r : = \Omegamax {\M'} r,
\]
where $\M'$ is $\MT$ with its topology removed.
Lemma~\ref{lem:MaxImpliesClosed}  guarantees that under minimal assumptions the
elements of sets $\Omegamax {\stwiddle M} r$ and of $\Omegamax {\stwiddle M} {\Delta_N}$ are  \emph{closed} substructures of $\M^n$, so that it will make sense to demand  that such relations are entailed by~$\M$.   This fact is tacitly used in various theorem statements later on, and in  those lemmas in Section~\ref{sec:proofs}  in which entailment by~$\M$ arises.
%%%%%%%%%%

\begin{lemma}[Closed Maximal Relations Lemma]
\label{lem:MaxImpliesClosed}
Let $\M =\langle M; G, R\rangle$ be a total structure and assume that $\Tp$ is a topology on $M$ such that each $g\in G$ is continuous with respect to $\Tp$. Let $\N = \langle N; \Tp\rangle$ be a topological space, and let $r$ be a closed subset of $\N^n$, for some $n\in \mathbb N$. If $\omega_1, \dots, \omega_n\colon M \to N$ are continuous  with respect to the topologies on $M$ and $N$, then every relation $s\subseteq M^n$ that forms a substructure of $\M^n$ and is maximal in $(\omega_1,\dots, \omega_n)^{-1}(r) $ is topologically closed.
\end{lemma}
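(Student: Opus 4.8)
The plan is to show that the topological closure $\overline{s}$ of $s$ in $\M^n$ is itself a substructure of $\M^n$ contained in $(\omega_1,\dots,\omega_n)^{-1}(r)$; maximality of $s$ then forces $\overline{s}=s$, which is precisely the assertion that $s$ is topologically closed.

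First I would record that $(\omega_1,\dots,\omega_n)^{-1}(r)$ is closed in $M^n$. The map $(\omega_1,\dots,\omega_n)\colon M^n \to N^n$ sending $(a_1,\dots,a_n)$ to $(\omega_1(a_1),\dots,\omega_n(a_n))$ is continuous, since each of its coordinate maps into the product $N^n$ is the composite of a projection with the continuous map $\omega_i$. As $r$ is closed in $N^n$ by hypothesis, its preimage $(\omega_1,\dots,\omega_n)^{-1}(r)$ is closed in $M^n$. In particular, since $s \subseteq (\omega_1,\dots,\omega_n)^{-1}(r)$ and closure is monotone, we get $\overline{s} \subseteq (\omega_1,\dots,\omega_n)^{-1}(r)$.

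Next I would verify that $\overline{s}$ is a substructure of $\M^n$. Because $\M$ is a total structure ($H=\varnothing$), being a substructure means exactly being closed under the (coordinatewise) operations in $G$, the relations in $R$ being inherited by restriction and so imposing no further constraint. Hence the task reduces to showing that the closure of a set closed under a continuous operation is again closed under that operation. For a $k$-ary $g\in G$, the lifted operation $g\colon (M^n)^k \to M^n$ is continuous with respect to the product topologies, because $g$ is continuous on $M$ with respect to $\Tp$. Using that in a finite power the closure of a product equals the product of the closures, together with the continuity inequality $g(\,\overline{A}\,)\subseteq \overline{g(A)}$, I obtain $g(\overline{s}^{\,k}) \subseteq \overline{g(s^k)} \subseteq \overline{s}$, since $g(s^k)\subseteq s$. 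Thus $\overline{s}$ is closed under every $g\in G$ and is therefore a substructure of $\M^n$.

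Combining the two observations, $\overline{s}$ is a substructure of $\M^n$ with $s \subseteq \overline{s} \subseteq (\omega_1,\dots,\omega_n)^{-1}(r)$. Maximality of $s$ among substructures contained in $(\omega_1,\dots,\omega_n)^{-1}(r)$ then yields $\overline{s}=s$, so $s$ is closed. The only step requiring genuine care --- the main (if modest) obstacle --- is the claim that the closure of a substructure is a substructure: this is precisely where the continuity of the operations in $G$ is used, and where one must handle correctly the interaction of topological closure with the finite product topology on $\M^n$ (and with the finite powers entering the arities of the operations).
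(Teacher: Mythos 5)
Your proof is correct and follows essentially the same route as the paper's: form the topological closure $\overline{s}$, observe that it remains inside $(\omega_1,\dots,\omega_n)^{-1}(r)$ because this preimage is closed, show that $\overline{s}$ is again a substructure of $\M^n$ using continuity of the operations in $G$, and conclude $\overline{s}=s$ by maximality. The additional details you supply---continuity of the lifted operations on $(M^n)^k$ and the interaction of closure with finite products via $g(\overline{s}^{\,k})\subseteq\overline{g(s^k)}\subseteq\overline{s}$---merely make explicit the steps the paper leaves tacit.
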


\begin{proof}
Assume that $\omega_1, \dots, \omega_n\colon M \to N$ are continuous and let $s$ be a subset of $M^n$ that forms a substructure of $\M^n$ and is maximal in $(\omega_1,\dots, \omega_n)^{-1}(r)$. Since $r$ is closed, it follows that $(\omega_1,\dots, \omega_n)^{-1}(r)$ is also closed. Hence the closure $\overline s$ of $s$ is a subset of $(\omega_1,\dots, \omega_n)^{-1}(r)$.
Since~$s$ is a substructure of~$\M^n$, it is closed under each~$g$ in~$G$. By continuity of~$g$, the set $\overline s$ is also closed under~$g$. 
Hence $\overline s$ forms a substructure of $\M^n$. The maximality of $s$ implies that $\overline s = s$, whence $s$ is topologically closed.
\end{proof}
%%%%%%%%%%%%%%%%

The remainder of this section concerns notions which will not arise subsequently until we reach our co-duality theorems, and the results dependent on these.
Because we are using the usual setting, in which the empty structure is allowed in $\CX = \IScP(\MT)$ (when the type of $\MT$ includes no nullary operations) but is not permitted in $\CA =\ISP(\M)$, we need to take
 care with naming constants in the type of~$\MT$ when considering co-dualities.   Since it simplifies the discussion and is all we need here, we shall restrict ourselves to the situation where both $\M$ and $\MT$ are total structures.
We say that a structure~$\A$ has \emph{named constants} if the value of every constant unary term function of~$\A$ is the value of a nullary term function. (Of course, to guarantee that~$\A$ has named constants, it
suffices to make the value of just one constant term function into the value of a nullary operation.) We require another definition. Given a total structure $\M = \langle M; G, R\rangle$ and $a\in M$, we say that $\{a\}$ forms a \emph{complete one-element substructure of $\M$} if $g(a, \dots, a) = a$, for all $g\in G$ and $(a, \dots, a)\in r$, for all $r\in R$.
Define $\C_1$ to be the topologically closed substructure of $\MT$ generated by the set of values of the constant unary term functions.
If $\MT$ has constant unary term functions but no nullary operations, then the empty structure $\boldsymbol\varnothing$ belongs to $\CX$ and it is easy to see that $\E {\boldsymbol\varnothing}$ and $\E{\C_1}$ are isomorphic complete one-element structures, so it is impossible for $\MT$ to co-dualise~$\M$. (It also follows that $\DE {\C_1}$ is in a one-to-one correspondence with the complete one-element
substructures of~$\M$.) Consequently, $\MT$ having named constants is a necessary condition for $\MT$ to co-dualise~$\M$. Indeed, this assumption guarantees that $\epsub {\boldsymbol\varnothing} \colon \boldsymbol\varnothing \to \DE {\boldsymbol\varnothing}$ is an isomorphism whenever $\boldsymbol\varnothing$ belongs to~$\CX$. The other conditions we give in our co-duality theorems are sufficient to prove that $\epsub \X \colon \X \to \DE \X$ is an isomorphism, for all non-empty $\X$ in~$\CX$. For a detailed discussion of named constants in the finite case, see~\cite[Lemma~6.1]{D06} and~\cite[Lemma~3.1.2]{NDftWA}.

In some of our results, we shall not need
to postulate explicitly that $\MT$ has named constants:  the following lemma ensures that this will follow
from other assumptions we shall
make.

\begin{lemma}[Named Constants Lemma]\label{lem:liftconstants}
Let $\M$ and $\N$ be non-trivial total structures \textup(not necessarily of the same type\textup), and let $\MT$ and $\NT$ be total structures that are alter egos of $\M$ and~$\N$, respectively.
Define $\CB\coloneqq \ISP(\N)$ and $\CY \coloneqq  \IScP(\NT)$, denote the induced dual adjunction between $\CB$ and $\CY$ by $\langle \mathrm{H}, \mathrm{K}, k, \kappa\rangle$, and assume that $\NT$ co-dualises~$\N$.
Assume that $\M$ and $\MT$ have structural reducts $\M^\flat$ in $\CB$ and  $\MT^\flat$ in~$\CY$, respectively. Then $\MT$ has named constants.
\end{lemma}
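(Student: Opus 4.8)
The plan is to verify the defining property of named constants directly: I must show that every constant unary term function of $\MT$ has a value that is also the value of a nullary term function of $\MT$. If $\MT$ has no constant unary term function there is nothing to prove, so fix a constant unary term function of $\MT$ with value $c\in M$, and write $c_e\colon M\to M$ for the associated constant endomorphism. Since $c_e\in\Clo_1(\MT)\subseteq\End(\M)$---the inclusion coming from the compatibility of $\M$ and $\MT$---the singleton $\{c\}$ is a complete one-element substructure of $\M$, and likewise of $\MT$. In particular $\{c\}$ is a subuniverse both of $\M$ and of $\MT$, so every nullary term function of $\MT$, its value lying in every subuniverse, takes the value $c$. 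Consequently the whole proof reduces to a single assertion: \emph{$\NT$ has a nullary operation}. Indeed, granting this, the structural reduct $\MT^\flat\in\IScP(\NT)$ carries the type of $\NT$ and so interprets that nullary operation as some $b\in M$; as the operations of $\MT^\flat$ lie in the enriched partial clone of $\MT$ (by the definition of structural reduct; see \cite[Lemma~2.5]{DPW11}), $b$ is the value of a nullary term function of $\MT$, whence $b=c$ by the remark just made. Thus $c$ is named.

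To produce the nullary operation of $\NT$ I first transport $\{c\}$ to the base structure $\N$. The reduct $\M^\flat\in\CB=\ISP(\N)$ has $\{c\}$ as a complete one-element substructure: its operations are term functions of $\M$ and so fix $c$, its relations are conjunct-atomic definable from $\M$ and so contain the constant tuple $(c,\dots,c)$, and any nullary operation of $\M^\flat$ evaluates to a nullary term function value of $\M$, which is $c$ as above. Since $\M$ is non-trivial, so is $\M^\flat$; hence $\M^\flat$ embeds into a power $\N^J$ with $J\neq\varnothing$. Composing with any coordinate projection gives a $\CB$-morphism $\psi\colon\M^\flat\to\N$, and a homomorphic image of a complete one-element substructure is again one, so $\{\psi(c)\}$ is a complete one-element substructure of $\N$.

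It remains to deduce, from the existence of a complete one-element substructure of $\N$ together with the hypothesis that $\NT$ co-dualises $\N$, that $\NT$ has a nullary operation. Let $\Dalg_1$ be the topologically closed substructure of $\NT$ generated by the values of the constant unary term functions of $\NT$. By the discussion preceding the lemma, applied to the pair $(\N,\NT)$, the object $\mathrm{HK}\,\Dalg_1$ is in bijection with the set of complete one-element substructures of $\N$, and is therefore non-empty. Because $\NT$ co-dualises $\N$, the counit $\kappa_{\Dalg_1}\colon\Dalg_1\to\mathrm{HK}\,\Dalg_1$ is an isomorphism, so $\Dalg_1\neq\boldsymbol\varnothing$. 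A non-empty $\Dalg_1$ forces $\NT$ to possess either a nullary operation or a constant unary term function; in the second case the fact that named constants is a necessary condition for co-dualisation---again from the discussion preceding the lemma---supplies a nullary operation as well. Either way $\NT$ has a nullary operation, which is exactly what the reduction in the first paragraph requires.

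The substantive step is the last one: it is there alone that the co-duality hypothesis is genuinely used, and the argument rests on the two facts recorded before the lemma, namely that co-dualisation forces named constants and that $\mathrm{HK}\,\Dalg_1$ enumerates the complete one-element substructures of $\N$. The transfers in the first two paragraphs are routine manipulations with compatibility and with structural reducts; the one point needing care is the passage from a constant unary term function of $\MT$ to a complete one-element substructure of the \emph{non-topological} structure $\M$, which is precisely what the inclusion $\Clo_1(\MT)\subseteq\End(\M)$ delivers.
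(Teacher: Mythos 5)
Your argument tracks the paper's proof almost step for step: you transport the complete one-element substructure $\{c\}$ from $\M$ to $\M^\flat$ and then, via a morphism $\psi\in\CB(\M^\flat,\N)$, to $\N$; you invoke the correspondence between $\mathrm{HK}(\C_1)$ (your $\Dalg_1$) and the complete one-element substructures of $\N$, together with co-duality, to conclude that $\C_1$ is non-empty; and you use the fact that $\NT$ has named constants to extract a nullary operation $\sigma$, which the reduct $\MT^\flat$ pulls back to a nullary term function of $\MT$ with some value $b\in M$. All of this is sound, and your explicit case split (``nullary operation or constant unary term function'') is in fact slightly more careful than the paper's compressed phrasing. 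The genuine gap is in your first paragraph, where you assert that $\{c\}$ is a complete one-element substructure ``likewise of $\MT$'' and deduce that every nullary term function of $\MT$ has value $c$, whence $b=c$. Compatibility gives $\Clo_1(\MT)\subseteq\End(\M)$, which is exactly why $\{c\}$ is a subuniverse of $\M$; it gives nothing of the sort for $\MT$ itself. A unary term function of $\MT$ need not be an endomorphism of $\MT$, and an operation $g$ in the type of $\MT$ is a homomorphism with respect to $\M$, which in no way forces $g(c,\dots,c)=c$. Indeed the claim cannot hold in general: nothing in the hypotheses prevents $\MT$ from having two constant unary term functions with distinct values $c\neq c'$ (the lemma's conclusion then simply names both constants), whereas your reasoning would force every nullary term function of $\MT$ to take the value $c$ and simultaneously the value $c'$.

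The repair is the paper's closing observation, and it renders your $b=c$ claim unnecessary: writing $c_e$ for the constant unary term function with value $c$ and $s$ for the nullary term function of $\MT$ with value $b$, the composite $c_e(s)$ is a nullary term function of $\MT$ whose value is $c_e(b)=c$, so $c$ is named. With that one-line substitution in place of the final identification, your proof coincides with the paper's.
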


\begin{proof}
Let $a\in M$ and assume that $\MT$ has a constant unary term function $t_a$ with value~$a$. Since $\MT$ is compatible with $\M$, the map $t_a$ is an endomorphism of $\M$ and hence $\{a\}$ forms a complete one-element substructure of $\M$---here we use the fact that each relation $r$ in the type of $\M$ is non-empty.
Hence $\{a\}$ forms a complete one-element substructure of $\M^\flat$.
As $\M^\flat$ is nontrivial and $\M^\flat$ belongs to $\CB=\ISP(\N)$, there exists $\omega\in \CB(\M^\flat, \N)$.
As $\{a\}$ forms a complete one-element substructure of $\M^\flat$, it follows that $\omega(a)$ forms a complete one-element substructure of $\N$. Let $\C_1$ denote the topologically closed substructure of $\NT$ generated by the set of values of the constant unary term functions of~$\NT$. As we noted above,
$\mathrm{HK}(\C_1)$
is in a one-to-one correspondence with the complete one-element substructures of~$\N$ and so
$\mathrm{HK}(\C_1)$
is non-empty. Since $\NT$ co-dualises~$\N$, the map $\epsub {\C_1} \colon \C_1 \to
\mathrm{HK}(\C_1)$
is an isomorphism and consequently $\C_1$ is non-empty. But $\NT$ has named constants, again since $\NT$ co-dualises~$\N$, and so the type of $\NT$ includes a nullary operation $\sigma$. Since $\MT^\flat$ is a structural reduct of $\MT$ in~$\CY$, it follows that $\MT$ has a nullary term function $s_a$ with value $\sigma^{\stwiddle M^\flat}$. Hence, $\MT$ has named constants.
(In particular,
$t_a(s_a)$ is a nullary term function of $\MT$ with value~$a$.)
\end{proof}

%%%%%%%%%%%%%%%%%%%%%%%%%%%%%%%%%%%%%%%%%%%%%%%%%%%%%%%
\section{The Piggyback Duality Theorems} \label{sec:results}
%%%%%%%%%%%%%%%%%%%%%%%%%%%%%%%%%%%%%%%%%%%%%

Now we set up the  framework of Basic Assumptions, within which we shall develop our piggyback duality and co-duality theorems, and the associated notation.
The benefits in terms of simplicity we gain by working uniformly with the Basic Assumptions outweigh the extra generality  gained at the margins by tailoring each theorem to a minimal set of assumptions.
For example, injectivity of~$\NT$ in~$\CY$ is explicitly needed in the
Piggyback Duality Theorem but not in the Piggyback Co-duality Theorem,
whereas injectivity of~$\N$ in~$\CB$ is needed in the latter theorem but not the former.
Thus the conditions laid out in the Basic Assumptions  are a little stronger than we strictly need in every case.
For this reason, in Section~\ref{sec:proofs}, where we give the proofs of our theorems, we shall include in each lemma only the conditions needed in the proof.

\begin{assume}\label{BasicAss}\
\begin{itemize}

\item
$\M$ and $\N$ are non-trivial total structures (not necessarily of the same type), and  $\MT$ and $\NT$ are total structures that are alter egos of $\M$ and $\N$, respectively.

\item
$\CA\coloneqq \ISP(\M)$ and $\CX\coloneqq \IScP(\MT)$ and the induced dual adjunction be\-tween $\CA$ and~$\CX$
is denoted by $\langle \mathrm{D}, \mathrm{E}, e, \varepsilon\rangle$.

\item
$\CB\coloneqq \ISP(\N)$ and $\CY \coloneqq  \IScP(\NT)$  and the induced dual adjunction  between $\CB$ and $\CY$ is denoted $\langle \mathrm{H}, \mathrm{K}, k, \kappa\rangle$. In addition,
\begin{enumerate}[\normalfont (i)]
\item  $\NT$ yields a full duality on $\CB$;
\item  $\NT$ is injective in~$\CY$ and $\N$ is injective in $\CB$.
\end{enumerate}
\end{itemize}
We shall add additional conditions as needed.  In  particular we shall, theorem by theorem, impose appropriate relationships between $\M$ and~$ \N$ or between $\MT$ and $\NT$.

We adopt the following notation.
Let $\Omega\subseteq N^M$ and let $\mathcal E\subseteq M^M$.
We write
\[
\Omega\circ \mathcal E \coloneqq  \{\, \omega \circ u \mid \omega\in \Omega \And u \in \mathcal E\,\}
\subseteq N^M.
\]
If $\Omega = \{\omega\}$, then we write simply $\omega\circ \mathcal E$.
\end{assume}

Operating under our Basic Assumptions, we now  briefly review  the ideas behind
 the piggybacking method, as presented in \cite{DW85,DW83PB,DP87}.
We assume that the structure $\M$, for which we wish to find a dualising alter ego, has a structural reduct $\M^\flat$ in the category $\CB$, for which we already have a full duality.
An alter ego $\MT$ will dualise $\M$ provided that, for each $\A \in \CA$, we can find a one-to-one map $\alpha \mapsto d_\alpha$ from $\mathrm{ED}(\A)$ to $\mathrm{KH}(\A^\flat)$ that commutes with the evaluations, that is,
$d_{-}\circ \esubA  = k_{\A^\flat}$; see Figure~\ref{fig:pig-fig1}.
\begin{figure}[h!t]
\begin{center}
\begin{tikzpicture}
[auto,
 text depth=0.25ex]
\matrix[row sep= 1.6cm, column sep= 1.6cm]
{
 \node (A) {$\A$};
  &\node (EDA)  {$\mathrm{ED}(A)$};
\\  & \node (KHA) {$\mathrm{KH}(\A^\flat)$}; \\
};
\draw [-latex] (A) to node {$\esubA $}  (EDA);
\draw [-latex] (A) to node [swap,xshift=.2cm] {$k_{\A^\flat}$}  (KHA);
\draw [-latex] (EDA) to node {$d_{-}$}  (KHA);
\end{tikzpicture}
\caption{The commuting triangle: the equation
$d_{-}\circ \esubA  = k_{\A^\flat}$}\label{fig:pig-fig1}
\end{center}
\end{figure}

We now come to the key idea underlying piggybacking: how we might construct the
map~$d_{-}$ such that $d_{-}\circ \esubA  = k_{\A^\flat}$, as in the commuting triangle diagram in Figure~\ref{fig:pig-fig1}.
We seek to use an element $\omega$, or a set $\Omega$ of elements,
in  $\CB(\M^\flat, \N)$ to link $\ED \A$ to $\mathrm{KH}(\A)$, for each
$\A \in \A$.  Following the usage in \cite{DP87} we shall  refer to such  maps as \emph{carriers}.  Explicitly, given $\omega\in \CB(\M^\flat, \N)$ and $\A\in \CA$, we may define a map
\[
\Phi_\omega^\A \colon \CA(\A, \M) \to \CB(\A^\flat, \N)
\]
by $\Phi_\omega^\A(x) \coloneqq \omega \circ x$, for all $x \in \CA(\A, \M)$.
In a best-case scenario there will exist a single carrier~$\omega$ making
$\Phi_\omega^\A$ surjective for each $\A \in \CA$.
In that case, given $\alpha\colon \CA(\A,\M)\to M$, we have a chance of  ``defining"
$d_\alpha \colon \CB(\A^\flat, \N) \to N$  by $d_\alpha (\omega \circ x) = \omega(\alpha(x))$, for $x \in \CA(\A,\M)$.
The  success of this construction then depends on our being able to select
$\omega$ and the alter ego $\MT$ in such a way that~$d_{-}$ is well defined and
one-to-one.

Historically, the piggybacking technique was devised in two stages. What we  call single-carrier piggybacking was formulated by Davey and Werner \cite{DW85,DW83PB}.  In \cite{DP87}, Davey and Priestley initiated the theory of multisorted natural dualities and thereby showed how to dispense with the restriction that there be
a single carrier, in the following way.
Suppose that we could find some subset~$\Omega $ of $\CB(\M^\flat,\N)$
such that, for each~$\A$,  the family $\{\Phi_\omega^\A\}_{\omega\in\Omega}$
is \emph{jointly surjective} in the sense that the union of the images of the maps $\Phi_\omega^\A$, for $\omega \in \Omega$, is the whole of $\CB(\A^\flat,\N)$.  Then we may still try to ``define'' $d_\alpha$ as before, but now with~$\omega$, as well as~$x$, varying. Presuming joint surjectivity, we would  then need to choose $\Omega $ and the alter ego so that~$d_{-}$ is well defined (and this is more of an issue than when there is a single~$\omega$) and one-to-one.

We are finally ready to state our piggyback duality and co-duality theorems.
We state each theorem first for the single-carrier case.  We single this out
for two reasons.  Firstly, the conditions we give for single-carrier piggybacking to work are not merely specialisations of those for the general case. Secondly,  our
strong-duality results rely only  on the  single-carrier versions of the duality and co-duality theorems.  So too do our results on semilattices  in Section~\ref{sec:SL}.

We note that in Condition~(3)(ii)(a) of Theorems~\ref{thm:pig-simple}
and Theorem~\ref{thm:copig-simple} we could insert the restriction that
$r \nsubseteq \Delta_M$; recall Remark~\ref{rem:rnotinDeltaM}.  Similar restrictions can be imposed in other theorems, likewise.
%%%%%%%%%%%%%%%%%%%%%%%%%%%%

\begin{theorem}[Single-carrier Piggyback Duality Theorem] \label{thm:pig-simple}
The setting for this theorem is provided by the Basic Assumptions.
Assume that $\M$ has a structural reduct $\M^\flat$ in $\CB$ and let $\omega \in \CB(\M^\flat, \N)$. Assume that $\NT = \langle N;G^\nu,R^\nu,\Tp\rangle$.
Then $\MT$ dualises $\M$ provided
{\upshape (0)--(3)} below all hold.

\begin{enumerate}[\normalfont (1)]

\item[\normalfont(0)]
$\omega$
is continuous with respect to the topologies on $\MT$ and $\NT$.

\item
$\omega\circ \Clo_1(\MT)$ separates the structure~$\M^\flat$.

\item One of the following conditions holds:

\begin{rmlist}
 \item[\normalfont(i)]  $\NT$ is purely relational, or

\item[\normalfont(ii)]
$\MT$ has a
structural
reduct $\MT^\flat$ in~$\CY$
with
$\omega\in
\CY(\MT^\flat, \NT)$.

\end{rmlist}

\item
\begin{rmlist}
 \item[\normalfont (i)] The structure $\MT$ entails every relation in $\omegamax {\M} r$,
for each relation
%bd0504 $r\in R$,
$r\in R^\nu$,
and

\item[\normalfont (ii)] 
either
\begin{rmlist}
\item[\normalfont (a)] $\MT$ entails each binary relation $r$ on $M$
which forms a substructure of $\M^2$ that is maximal in $\ker(\omega)$, or

\item[\normalfont (b)]
$\omega \circ \Clo_1(\M)$ separates the points of $M$.

\end{rmlist}
\end{rmlist}
\end{enumerate}
\end{theorem}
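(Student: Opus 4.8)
The strategy is the classical piggyback one, adapted to the topological, possibly infinite, setting. Fix $\A\in\CA$. Since the unit $\esubA\colon\A\to\mathrm{ED}(\A)$ is always an embedding, it suffices to prove that it is onto, and I will do this by realising the commuting triangle of Figure~\ref{fig:pig-fig1}: a one-to-one map $d_{-}\colon\mathrm{ED}(\A)\to\mathrm{KH}(\A^\flat)$ with $d_{-}\circ\esubA=k_{\A^\flat}$. Granting this, the endgame is immediate: $k_{\A^\flat}$ is a bijection because $\NT$ fully dualises $\N$, so any $\alpha\in\mathrm{ED}(\A)$ has $d_\alpha=k_{\A^\flat}(a)=d_{\esubA(a)}$ for some $a\in A$, whence $\esubA(a)=\alpha$ by injectivity of $d_{-}$. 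The whole proof therefore reduces to four tasks for the carrier map $\Phi_\omega^\A\colon\mathrm{D}(\A)\to\mathrm{H}(\A^\flat)$, $x\mapsto\omega\circ x$: (S) $\Phi_\omega^\A$ is onto; (W) the prescription $d_\alpha(\omega\circ x):=\omega(\alpha(x))$ is well defined; (M) each $d_\alpha$ is a $\CY$-morphism; and (I) $d_{-}$ is one-to-one.

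For (S), the plan is to show that the image $I:=\Phi_\omega^\A(\mathrm{D}(\A))$ is a \emph{closed substructure} of $\mathrm{H}(\A^\flat)$ that \emph{separates} $\A^\flat$, and then to invoke the strong duality for $\N$. Closedness is automatic: $\mathrm{D}(\A)=\CA(\A,\M)$ is a closed, hence compact, substructure of $\MT^A$, and $\Phi_\omega^\A$ is continuous by~(0), so $I$ is compact and therefore closed in the Hausdorff space $\mathrm{H}(\A^\flat)\le\NT^A$. That $I$ is a substructure follows because each $g\in G^\nu$ is a term function of the reduct $\M^\flat$, so its pointwise action sends members $\omega\circ x_i$ of $I$ to $\omega\circ g^{\M^\flat}(x_1,\dots,x_k)\in I$; under alternative~(2)(i) there are no such operations and this is vacuous. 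Separation comes from~(1): composing the embedding $\A^\flat\le(\M^\flat)^{\mathrm{D}(\A)}$ with the embedding $\M^\flat\hookrightarrow\N^{\Clo_1(\MT)}$ supplied by~(1) realises $\A^\flat$ inside a power of $\N$ using only maps $\omega\circ(u\circ x)$ with $u\in\Clo_1(\MT)\subseteq\End(\M)$ and $x\in\mathrm{D}(\A)$, all of which lie in $I$. Finally, under a strong duality a closed substructure of the dual $\mathrm{H}(\A^\flat)$ that separates $\A^\flat$ must be all of $\mathrm{H}(\A^\flat)$; this is precisely the step that consumes the injectivity of $\NT$ in $\CY$.

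Tasks (W), (M) and (I) are then routine transport-along-$\omega$ calculations, all resting on the fact that, because $\M$ is \emph{total}, the image of $\langle x_1,\dots,x_n\rangle\colon\A\to\M^n$ is a substructure of $\M^n$. For (W): if $\omega\circ x_1=\omega\circ x_2$ then this image lies in $\ker(\omega)$, hence inside some $r\le\M^2$ maximal in $\ker(\omega)$; as $\MT$ entails $r$ by~(3)(ii)(a), the morphism $\alpha$ preserves $r$, giving $(\alpha(x_1),\alpha(x_2))\in r\subseteq\ker(\omega)$, as required (Remark~\ref{rem:rnotinDeltaM} lets me assume $r\nsubseteq\Delta_M$, and~(3)(ii)(b) is subsumed since $\omega\circ\Clo_1(\M)$ separating the points of $M$ forces every such $r$ down to $\Delta_M$, entailed trivially). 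Relation-preservation in (M) runs the same argument with $\omegamax{\M}{r}$, $r\in R^\nu$: a tuple of $I$ lying in $r$ pulls back to an image-substructure of $\M^n$ inside $\{\omega\}^{-1}(r)$, hence inside some maximal $s\in\omegamax{\M}{r}$ entailed by $\MT$ via~(3)(i); here Lemma~\ref{lem:MaxImpliesClosed} is needed to guarantee that such $s$ are closed, so that entailment by $\MT$ is even meaningful. Operation-preservation uses~(2)(ii), namely $\omega\in\CY(\MT^\flat,\NT)$ together with the $G^\nu$ being term functions of $\MT$, so that $\alpha$ commutes with them (vacuous under~(2)(i)); continuity of $d_\alpha$ follows from~(0), since $\Phi_\omega^\A$ is a continuous surjection out of a compact space and hence a quotient map. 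For (I): if $d_{\alpha_1}=d_{\alpha_2}$ then $\omega(\alpha_1(x))=\omega(\alpha_2(x))$ for all $x$; replacing $x$ by $u\circ x$ with $u\in\Clo_1(\MT)$ and using that $\alpha_i$ preserves the $\MT$-term-function $u$ gives $(\omega\circ u)(\alpha_1(x))=(\omega\circ u)(\alpha_2(x))$ for every $u$, whence $\alpha_1(x)=\alpha_2(x)$ because $\omega\circ\Clo_1(\MT)$ separates the points of $M$ by~(1).

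I expect the surjectivity step (S) to be the main obstacle, specifically the claim that a separating closed substructure of the dual $\mathrm{H}(\A^\flat)$ exhausts it. This is the one point that genuinely needs the infinite/topological machinery — compactness of $\mathrm{D}(\A)$, closedness of the maximal relations via Lemma~\ref{lem:MaxImpliesClosed}, and the strong duality for $\N$ — and it is here that the asymmetry flagged after the Basic Assumptions surfaces: the duality theorem leans on injectivity of $\NT$ in $\CY$, while injectivity of $\N$ in $\CB$ plays no role (it is reserved for the co-duality theorem). Once the guiding principle ``images of tuples of morphisms into a total $\M$ are substructures'' is in hand, everything else is formal bookkeeping, which I would modularise into the separate lemmas foreshadowed for Section~\ref{sec:proofs}.
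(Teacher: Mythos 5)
Your overall architecture coincides with the paper's own modular proof: the paper likewise reduces everything to the commuting triangle, proving joint surjectivity of $\Phi_\omega^\A$ via separation plus a density argument (Lemmas~\ref{lem:everything1}, \ref{lem:SepImpliesDense} and~\ref{lem:everything2}), well-definedness of $d_\alpha$ via entailed maximal relations (Lemma~\ref{lem:everything4}), the morphism property via Lemmas~\ref{lem:everything5}, \ref{lem:PhiContinuous} and~\ref{lem:pres3}, and injectivity plus the triangle identity via Lemma~\ref{lem:everything3}; your tasks (W), (M), (I) and the endgame reproduce these faithfully. However, one step of (S) fails as written: the claim that the image $I=\Phi_\omega^\A(\mathrm{D}(\A))$ is closed under the operations in $G^\nu$ ``because each $g\in G^\nu$ is a term function of the reduct $\M^\flat$''. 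This is wrong on two counts. First, $G^\nu$ is the operation set of the \emph{alter ego} $\NT$, not of $\N$; nothing makes these operations term functions of the $\CB$-reduct $\M^\flat$, and Condition~(2)(ii) instead makes them term functions of $\MT$, via the $\CY$-reduct $\MT^\flat$. Second, even granting such a term, your computation needs $g$ applied pointwise to $x_1,\dots,x_k\in\CA(\A,\M)$ to land back in $\CA(\A,\M)$; but $\CA(\A,\M)$ is a closed substructure of $\MT^A$, \emph{not} of $\M^A$, and pointwise $\M$-term images of homomorphisms are in general not homomorphisms (the pointwise join of two lattice homomorphisms already fails). The correct route is the paper's Morphism Lemma~\ref{lem:PhiMorphism} feeding the Substructure Lemma~\ref{lem:everything1}: since $\omega\in\CY(\MT^\flat,\NT)$ preserves every relation and graph in the type of $\NT$, the map $\Phi_\omega^\A\colon\CA(\A,\M)^\flat\to\CB(\A^\flat,\N)$ is itself a $\CY$-morphism, and because $\NT$ is a total structure its image is a closed substructure. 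Your version happens to work in the semilattice application, where the types of $\N$ and $\NT$ coincide and $\wedge$ is a homomorphism from $\M^2$ to $\M$, but not in general.

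Two lesser points. The density claim you flag as the main obstacle and leave unproven is exactly the paper's Density Lemma~\ref{lem:SepImpliesDense}: for the closed substructure $\Y$ of $\mathrm{H}(\A^\flat)$ generated by a separating set, one shows $\mathrm{K}(\mu)$ is an embedding for the inclusion $\mu$ (using that $\NT$ dualises $\N$), injectivity of $\NT$ in $\CY$ then makes $\mathrm{K}(\mu)$ surjective, and fullness of the base duality converts the resulting isomorphism back into $\Y=\mathrm{H}(\A^\flat)$; you identified the right ingredients, so this is an omission rather than an error, but the argument is not routine and must be supplied. Also, your appeal to Lemma~\ref{lem:MaxImpliesClosed} in (M) is misplaced: entailment \emph{by $\MT$} only requires the relations in $\omegamax{\M}{r}$ to form substructures of $\M^n$, which holds by construction; topological closedness of maximal relations is needed only for entailment \emph{by $\M$}, that is, in the co-duality theorem. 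The rest---well-definedness via (3)(ii)(a), the subsumption of (3)(ii)(b) using closure of substructures of $\M^2$ under $\Clo_1(\M)$, the quotient-map continuity argument (a neat shortcut past Lemma~\ref{lem:PhiContinuous}(2)), and the injectivity of $d_{-}$---is correct and matches the paper.
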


In the above theorem, Condition (3)(ii)(b) implies Condition (3)(ii)(a), but it is convenient to have (3)(ii)(b) recorded explicitly.
(Indeed, it is an almost trivial exercise to show that if $\omega \circ \Clo_1(\MT)$ separates the points of $M$, then each binary relation on $M$
 that forms a substructure of $\M^2$ and is contained in $\ker(\omega)$ is contained in $\Delta_M$. A maximal such relation must therefore be $\Delta_M$, which is trivially entailed by any alter ego.)

%%%%%%%%%%%%%%

We now give the Piggyback Duality  Theorem in its general form.
It is worth drawing attention to the structure of the statement of the theorem.
As occurs in  Theorem~\ref{thm:pig-simple} too,  we have a
list of conditions (0)--(3), which together with the initial assumptions yield the conclusion.  In  Condition (2) there is a dichotomy:
(2)(i) covers the case in which $\NT$ is purely relational, whereas (2)(ii) allows
for operations  (albeit at most unary) in $\NT$ but then imposes a  rather stringent
condition  on how $\MT$ is related to $\NT$.  An analogous dichotomy appears in
Condition~(2) in the Piggyback Co-duality Theorem.
%%%%%%%%%%%%%%%%%%%%%%%%%%%%

\begin{theorem}[Piggyback Duality Theorem] \label{thm:pig-general}
The setting for this theorem is provided by the Basic Assumptions.
Assume that $\M$ has a structural reduct $\M^\flat$ in $\CB$ and let  $\Omega$ be a subset of $ \CB(\M^\flat, \N)$.
Assume that $\NT = \langle N;G^\nu,R^\nu,\Tp\rangle$.
Then $\MT$ dualises $\M$ provided
either $\Omega$ is a singleton set $\{\omega\}$ and the conditions for
Theorem~{\upshape\ref{thm:pig-simple}} are met or
{\upshape (0)--(3)} below all hold.

\begin{enumerate}[\normalfont (1)]

\item[\normalfont(0)]
$\Omega$ is finite and each $\omega\in \Omega$ is continuous with respect to the topologies on $\MT$ and $\NT$.

\item
$\Omega\circ \Clo_1(\MT)$ separates the structure~$\M^\flat$.

\item One of the following conditions holds:

\begin{rmlist}
\item[\normalfont(i)]  
$\NT$ is purely relational, or

\item[\normalfont(ii)]
$\MT$ has a structural reduct $\MT^\flat$ in~$\CY$, each $\omega\in \Omega$ belongs to $\CY(\MT^\flat, \NT)$, and every operation in $G^\nu$ is unary or nullary.
\end{rmlist}

\item

\begin{rmlist}
 \item[\normalfont (i)] 
The structure $\MT$ entails every relation in $\Omegamax {\M} r$, for each relation
$r\in R^\nu$, and

 \item[\normalfont (ii)]  
$\MT$ entails every relation in $\Omegamax {\M}{\Delta_N}$.
\end{rmlist}

\end{enumerate}
\end{theorem}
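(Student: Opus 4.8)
The plan is to fix $\A\in\CA$ and prove that the embedding $\esubA\colon\A\to\ED\A$ is onto, so that $\MT$ dualises~$\M$; since $\esubA$ is always an embedding, surjectivity gives an isomorphism. The engine is the commuting triangle of Figure~\ref{fig:pig-fig1}. For each $\alpha\in\ED\A$ I will construct an element $d_\alpha\in\mathrm{KH}(\A^\flat)$ with $d_\alpha(\omega\circ x)=\omega(\alpha(x))$ for all $\omega\in\Omega$ and all $x\in\CA(\A,\M)$. Once each $d_\alpha$ is known to lie in $\mathrm{KH}(\A^\flat)$, the full duality of $\NT$ on $\CB$ makes $k_{\A^\flat}$ an isomorphism, so $d_\alpha=k_{\A^\flat}(a)$ for a unique $a\in A$, and it then suffices to prove $\alpha=\eA a$. (The singleton clause is handled by Theorem~\ref{thm:pig-simple}; I treat the case in which \textup{(0)--(3)} hold.)

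Constructing $d_\alpha$ is the technical core, which I would modularise. Write $X\coloneqq\CA(\A,\M)$ and $S\coloneqq\bigcup_{\omega\in\Omega}\Phi_\omega^\A(X)\subseteq\mathrm{H}(\A^\flat)$. First, the rule $\omega\circ x\mapsto\omega(\alpha(x))$ is well defined on $S$: if $\omega_1\circ x_1=\omega_2\circ x_2$, then $\langle x_1,x_2\rangle\colon\A\to\M^2$ has image inside $(\omega_1,\omega_2)^{-1}(\Delta_N)$, hence inside some maximal substructure $s\in\Omegamax{\M}{\Delta_N}$; since $\MT$ entails $s$ by \textup{(3)(ii)}, applying this to $\alpha$ yields $(\alpha(x_1),\alpha(x_2))\in s\subseteq(\omega_1,\omega_2)^{-1}(\Delta_N)$, i.e.\ $\omega_1(\alpha(x_1))=\omega_2(\alpha(x_2))$. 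The same argument with an $n$-ary $r\in R^\nu$ in place of $\Delta_N$, using \textup{(3)(i)}, shows that $d_\alpha$ preserves each relation of $\NT$ on $S$. Continuity of $d_\alpha$ on $S$ follows because each $\Phi_\omega^\A$ is continuous by \textup{(0)}, $\D\A$ is compact, and $\Omega$ is finite, so $S$ is a finite union of closed sets on each of which $d_\alpha$ descends from the continuous map $x\mapsto\omega(\alpha(x))$; Lemma~\ref{lem:MaxImpliesClosed} supplies the closedness of the maximal relations used along the way.

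Next I would promote $d_\alpha$ from $S$ to a genuine $\CY$-morphism on all of $\mathrm{H}(\A^\flat)$, and here the dichotomy in \textup{(2)} and injectivity of $\NT$ in $\CY$ enter. When $\NT$ is purely relational \textup{(2)(i)}, $G^\nu=\varnothing$, so $S$ is already a closed substructure and $d_\alpha$ is a $\CY$-morphism on it; injectivity of $\NT$ in $\CY$ then extends $d_\alpha$ across the embedding $\mathbf S\hookrightarrow\mathrm{H}(\A^\flat)$ to the required element of $\mathrm{KH}(\A^\flat)$. When $\NT$ carries operations \textup{(2)(ii)}, the hypotheses $\MT^\flat\in\CY$, $\omega\in\CY(\MT^\flat,\NT)$, and the confinement of $G^\nu$ to unary and nullary operations are exactly what is needed to see that $d_\alpha$ respects the operations of $\NT$ before this extension step. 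In either case the extension agrees with $d_\alpha$ on $S$, which is all the remaining argument uses. To close the triangle, choose $a$ with $k_{\A^\flat}(a)=d_\alpha$; evaluating at $\omega\circ x\in S$ gives $\omega(\alpha(x))=(\omega\circ x)(a)=\omega(x(a))$ for all $\omega\in\Omega$, $x\in X$. To upgrade this to $\alpha(x)=x(a)$ I exploit $\Clo_1(\MT)$: for $u\in\Clo_1(\MT)\subseteq\End(\M)$ one has $u\circ x\in X$ and $\alpha(u\circ x)=u(\alpha(x))$, so replacing $x$ by $u\circ x$ yields $(\omega\circ u)(\alpha(x))=(\omega\circ u)(x(a))$ for every $\omega\circ u\in\Omega\circ\Clo_1(\MT)$. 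Since this set separates the points of $M$ by \textup{(1)}, we obtain $\alpha(x)=x(a)=\eA a(x)$ for all $x$, whence $\alpha=\eA a$ and $\esubA$ is onto.

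The main obstacle I anticipate is the middle step: verifying that $d_\alpha$ is an honest continuous, structure-preserving map on the whole of $\mathrm{H}(\A^\flat)$, rather than merely a well-defined function on $S$. The entailment clauses \textup{(3)} are tailored precisely to force well-definedness and relation-preservation, but in the infinite setting the topology must be tracked throughout—finiteness of $\Omega$ together with continuity in \textup{(0)} keeps $S$ closed and $d_\alpha$ continuous, and Lemma~\ref{lem:MaxImpliesClosed} guarantees that the maximal relations are closed substructures—while the operation-preservation in case \textup{(2)(ii)} is the genuinely awkward point. It is exactly this point that explains why $G^\nu$ must be restricted to unary and nullary operations, and why injectivity of $\NT$ in $\CY$ is invoked to carry out the final extension; by contrast, separation in \textup{(1)} and the full duality on $\CB$ enter only in the comparatively routine opening and closing of the triangle.
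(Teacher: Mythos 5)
Your proposal is correct, but it assembles the pieces in a genuinely different way from the paper. The paper's proof first establishes that the family $\{\Phi_\omega^\A\}_{\omega\in\Omega}$ is \emph{jointly surjective} onto $\CB(\A^\flat,\N)$ (Joint Surjectivity Lemma~\ref{lem:everything2}(D)(2), built from the Substructure Lemma~\ref{lem:everything1} and the Density Lemma~\ref{lem:SepImpliesDense}); this is exactly where Condition~(1)'s \emph{structure}-separation of $\M^\flat$, the full duality on $\CB$ and the injectivity of $\NT$ in $\CY$ do their work. Only then is $d_\alpha$ defined, on all of $\CB(\A^\flat,\N)$ at once, and the finish goes through injectivity of the map $\alpha\mapsto d_\alpha$ (Commuting Triangle Lemma~\ref{lem:everything3}(D)(1)). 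You instead never prove joint surjectivity: you build $d_\alpha$ only on the joint image $S$, verify it is a $\CY$-morphism there (your well-definedness, relation-preservation, continuity and operation-preservation steps reproduce the Existence, Relation Preservation, Continuity and Operation Preservation Lemmas restricted to~$S$, all sound), extend across the closed embedding $\mathbf S\hookrightarrow \mathrm H(\A^\flat)$ by injectivity of $\NT$ in~$\CY$, and then close the triangle by a direct evaluation-plus-separation computation using only that $\Omega\circ\Clo_1(\MT)$ separates the \emph{points} of~$M$. This is in the spirit of the paper's Extended Commuting Triangle Lemma~\ref{lem:everything3g}, though your finish is even leaner: you bypass its density hypothesis because non-uniqueness of the extension $\widehat d_\alpha$ is harmless once you verify $\alpha=\eA a$ pointwise. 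What each route buys: yours is more economical for the bare duality statement and in effect shows that point-separation suffices in place of the full force of Condition~(1) at this step; the paper's route delivers joint surjectivity as a reusable byproduct, which is needed later for the strong duality theorem and for the isomorphisms $\D\A^\flat\cong\mathrm H(\A^\flat)$ in Theorem~\ref{thm:pigstrong1}(c). One small inaccuracy: your appeal to Lemma~\ref{lem:MaxImpliesClosed} is superfluous here, since entailment by $\MT$ only requires the maximal relations to form substructures of $\M^n$, with no closedness condition; that lemma is needed only on the co-duality side, where entailment by $\M$ demands topologically closed substructures of $\MT^n$.
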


We now move on to consider co-dualities.  We seek, {\it mutatis mutandis},
to mimic  the  strategy we described  in our brief survey of traditional piggybacking.
%%%%%%%
If $\MT$ has a structural reduct $\MT^\flat$ in $\CY$, then
each
$\X\in \CX$ has a structural reduct $\X^\flat$ in $\CY$,
and if $\omega\in \CY(\MT^\flat, \NT)$ and $\X\in \CX$, we may define a map
\[
\Psi_\omega^\X \colon \CX(\X, \MT) \to \CY(\X^\flat, \NT)
\]
by $\Psi_\omega^\X(\alpha) \coloneqq \omega \circ \alpha$, for all $\alpha \in \CX(\X, \MT)$.
The usual calculations show that
$\Psi_\omega^{-}$ is a natural transformation between the set-valued hom-functors $\CX(-,\MT)$ and $\CY((-)^\flat, \NT)$ out of $\CX$,
and similarly $\Phi_\omega^{-}$, as defined earlier, is a natural transformation between $\CA(-,\M)$ and $\CB((-)^\flat, \N)$.

\begin{theorem}[Single-carrier Piggyback Co-duality Theorem] \label{thm:copig-simple}
The setting for this theorem is provided by the Basic Assumptions with the additional assumption that $\MT$ has named constants.
 Assume that $\MT$ has a structural reduct $\MT^\flat$ in $\CY $ and let $\omega \in \CY(\MT^\flat, \NT)$. Assume that $\N =\langle N; G^\nu, R^\nu\rangle$.
Then $\MT$ co-dualises $\M$ provided {\upshape (1)--(3)} below all hold.

\begin{enumerate}[\ \normalfont (1)]

\item
$\omega\circ \Clo_1(\M)$ separates the structure~$\MT^\flat$.

\item
One of the following conditions holds:

\begin{rmlist}

\item[\normalfont(i)] $\N$ is purely relational, or

\item[\normalfont(ii)]  $\M$ has a structural reduct $\M^\flat$ in~$\CB$ such that  $\omega
\in \CB(\M^\flat, \N)$.

\end{rmlist}

\item The structures $\M = \langle M; G_1, R_1
\rangle$ and $\MT = \langle M; G_2, R_2,\Tp\rangle$
satisfy the following conditions:

\begin{rmlist}

\item[\normalfont(i)]

\begin{alist}

\item[\normalfont(a)]  
the operations in $G_2$ are continuous, and

\item[\normalfont(b)]
the structure $\M$ entails every relation in $\omegamax {\stwiddle M} r$, for each
$r\in R^\nu$.

\end{alist}

\item[\normalfont(ii)]
Either

\begin{alist}

\item[\normalfont(a)]
$\M$ entails each binary relation $r$ on $M$ which forms a substructure of $\MT^2$ that is maximal in $\ker(\omega)$, or

\item[\normalfont(b)]
$\omega \circ \Clo_1(\MT)$ separates the points of $M$.

\end{alist}

\end{rmlist}
\end{enumerate}

\end{theorem}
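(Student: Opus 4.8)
The plan is to run the single-carrier piggyback construction in the mirror, interchanging the roles of $\M$ and $\MT$ (and hence of $\N$ and $\NT$, of $\CA$ and $\CX$, and of $\CB$ and $\CY$) relative to the Single-carrier Piggyback Duality Theorem~\ref{thm:pig-simple}, with the map $\Psi_\omega^\X$ playing the part of $\Phi_\omega^\A$. We must show that $\epsub\X\colon\X\to\DE\X$ is an isomorphism for every $\X\in\CX$; since $\epsub\X$ is always an embedding, it is enough to prove surjectivity. The empty structure $\boldsymbol\varnothing$ (which lies in $\CX$ precisely when $\MT$ has no nullary operations) is dealt with by the standing named-constants hypothesis, which makes $\epsub{\boldsymbol\varnothing}$ an isomorphism as noted in Section~\ref{sec:scene}; so fix a non-empty $\X$ and form its structural reduct $\X^\flat\in\CY$. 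Because $\NT$ fully dualises $\N$, it co-dualises $\N$, so $\kappa_{\X^\flat}\colon\X^\flat\to\mathrm{HK}(\X^\flat)$ is an isomorphism. The core of the argument is to erect, in the style of Figure~\ref{fig:pig-fig1}, a commuting triangle
\[
c_{-}\circ\epsub\X=\kappa_{\X^\flat},
\]
in which $c_{-}\colon\DE\X\to\mathrm{HK}(\X^\flat)$ is one-to-one. Granting this, surjectivity of $\epsub\X$ is immediate: for $w\in\DE\X$, surjectivity of $\kappa_{\X^\flat}$ gives $x\in X$ with $\kappa_{\X^\flat}(x)=c_w$, whence $c_{\epsub\X(x)}=c_w$ and injectivity of $c_{-}$ forces $\epsub\X(x)=w$.

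To construct $c_{-}$, for $w\in\DE\X=\CA(\CX(\X,\MT),\M)$ I would set $c_w(\omega\circ\alpha)\coloneqq\omega(w(\alpha))$, for $\alpha\in\CX(\X,\MT)$; the triangle then commutes by the one-line evaluation $c_{\epsub\X(x)}(\omega\circ\alpha)=\omega(\alpha(x))=\kappa_{\X^\flat}(x)(\omega\circ\alpha)$. Four points require checking, each matching a clause of the hypotheses. First, that every element of $\CY(\X^\flat,\NT)$ has the form $\omega\circ\alpha$, that is, that $\Psi_\omega^\X$ is surjective; this is where Condition~(1), the separation of $\MT^\flat$ by $\omega\circ\Clo_1(\M)$, is used, in combination with the injectivity of $\N$ in $\CB$. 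Second, that $\omega(w(\alpha))$ depends only on $\omega\circ\alpha$: when $\omega\circ\alpha=\omega\circ\beta$, the map $x\mapsto(\alpha(x),\beta(x))$ sends $\X$ into $\ker(\omega)$, and its image (a substructure of $\MT^2$, since we work in the total setting) lies in a substructure maximal in $\ker(\omega)$, which $\M$ entails by Condition~(3)(ii); the $\CA$-morphism $w$ therefore preserves it, giving $(w(\alpha),w(\beta))\in\ker(\omega)$. Third, that $c_w$ preserves the structure of $\N$, so that $c_w\in\mathrm{HK}(\X^\flat)=\CB(\CY(\X^\flat,\NT),\N)$: preservation of each $r\in R^\nu$ follows by the same maximal-relation argument from Condition~(3)(i)(b), the entailment by $\M$ of the relations in $\omegamax{\stwiddle M}{r}$, while the dichotomy of Condition~(2) disposes of the operations $G^\nu$ of $\N$ (there are none when $\N$ is purely relational, and otherwise $\omega\in\CB(\M^\flat,\N)$ supplies the requisite compatibility). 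Throughout, entailment by $\M$ presupposes that these maximal relations are topologically closed substructures of $\MT^n$; this is secured by the continuity Condition~(3)(i)(a) together with the continuity of the carrier $\omega$, via the Closed Maximal Relations Lemma~\ref{lem:MaxImpliesClosed} (applied with $\MT$ in the role of its $\M$, and the compact Hausdorff topology of $\NT$ supplying the closed target relations).

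It remains to verify that $c_{-}$ is one-to-one, and here only the point-separating consequence of Condition~(1) is needed. Suppose $c_w=c_{w'}$, so $\omega(w(\alpha))=\omega(w'(\alpha))$ for all $\alpha\in\CX(\X,\MT)$. For $h\in\Clo_1(\M)$ the substructure $\CX(\X,\MT)$ of $\M^X$ is closed under $h$, and the $\CA$-morphisms $w,w'$ preserve this unary term function; substituting $h\circ\alpha$ for $\alpha$ therefore upgrades the identity to $(\omega\circ h)(w(\alpha))=(\omega\circ h)(w'(\alpha))$ for every $h$. Since $\omega\circ\Clo_1(\M)$ separates the points of $M$, we conclude $w(\alpha)=w'(\alpha)$ for all $\alpha$, that is, $w=w'$. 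Together with the first paragraph, this completes the proof.

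The step I expect to be the main obstacle is the surjectivity of $\Psi_\omega^\X$, equivalently the lifting of an arbitrary $\phi\in\CY(\X^\flat,\NT)$ along $\omega$ to some $\alpha\in\CX(\X,\MT)$ with $\omega\circ\alpha=\phi$. This is the one place where the co-duality theorem is genuinely more than a formal transcription of Theorem~\ref{thm:pig-simple}: the topology now sits on the $\MT$-side, so the lift has to be produced as a \emph{continuous} morphism, and the injectivity of $\N$ in $\CB$ must be deployed in a way compatible with closedness. It is precisely to make this work that the co-duality theorem carries the hypotheses that its duality partner lacks, namely the named-constants assumption and the continuity Condition~(3)(i)(a). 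In the eventual write-up this surjectivity, along with the well-definedness and morphism-preservation checks, would be isolated as the separate lemmas promised in Section~\ref{sec:proofs}, each stated with $\M$ and $\MT$ in their swapped roles.
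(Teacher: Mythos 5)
Your proposal is correct and takes essentially the same route as the paper, which proves the co-duality theorem as the (coD) mirror of the duality proof via exactly the modular steps you reconstruct: the commuting triangle $c_{-}\circ\epsub\X=\kappa_{\X^\flat}$ with $c_w$ (the paper's $d_u$) defined by $c_w(\omega\circ\alpha)=\omega(w(\alpha))$, well-definedness via entailed maximal relations in $\ker(\omega)$ (with Condition (3)(ii)(b) handled either by noting it implies (3)(ii)(a) or, as the paper does, by showing $\Psi_\omega^\X$ is one-to-one), relation and operation preservation from Conditions (3)(i)(b) and (2), surjectivity of $\Psi_\omega^\X$ from Condition (1) together with injectivity of $\N$ in $\CB$ (the Substructure, Density and Joint Surjectivity Lemmas), closedness of the relevant maximal relations from (3)(i)(a) via Lemma~\ref{lem:MaxImpliesClosed}, and named constants disposing of $\boldsymbol\varnothing$. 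Your closing prediction is the one inessential misreading: in the paper the surjectivity step is in fact topologically \emph{simpler} than its (D) counterpart, because the generation/density argument takes place on the untopologised $\CB$-side (the joint image need only generate $\CY(\X^\flat,\NT)$ as a substructure of $\N^X$, with no closure required), so no continuity of the lift has to be engineered there---the topology enters the co-duality only through Lemma~\ref{lem:MaxImpliesClosed} and the named-constants hypothesis, just as you use them.
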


Regarding  Condition (3)(i)(a) we note that continuity of the operations in~$G_2$
is not a consequence of $\MT$'s  being an alter ego of~$\M$, as one might perhaps be tempted to assume.
%%%%%%

\begin{theorem}[Piggyback Co-duality Theorem] \label{thm:copig-general}
The setting for this theorem is provided by the Basic Assumptions
with the additional assumption that $\MT$ has named constants.
Assume that $\MT$ has a structural reduct $\MT^\flat$ in $\CY$ and let $\Omega$ be a subset of $ \CY(\MT^\flat, \NT)$.
Assume that $\N =\langle N; G^\nu, R^\nu\rangle$.
Then $\MT$ co-dualises $\M$ provided
either $\Omega$ is a singleton set $\{\omega\}$ and the conditions of
Theorem~{\upshape\ref{thm:copig-simple}} are met or {\upshape (1)--(3)} below all hold.

\begin{enumerate}[\normalfont (1)]

\item
$\Omega\circ \Clo_1(\M)$ separates the structure~$\MT^\flat$.

\item
One of the following conditions holds:

\begin{rmlist}

\item[\normalfont(i)]
$\N$ is purely relational, or

\item[\normalfont(ii)]  $\M$ has a structural reduct $\M^\flat$ in~$\CB$, each $\omega\in \Omega$ belongs to $\CB(\M^\flat, \N)$, and every operation in $G^\nu$ is unary or nullary.

\end{rmlist}

\item The structures $\M = \langle M; G_1, R_1
\rangle$
and $\MT = \langle M; G_2, R_2,\Tp\rangle$
satisfy the following conditions:

\begin{rmlist}

\item[\normalfont(i)]
\begin{alist}

\item[\normalfont(a)] the operations in $G_2$ are continuous, and
\item[\normalfont(b)]
the structure $\M$ entails every relation in $\Omegamax {\stwiddle M} r$, for each
$r\in R^\nu$; 

\end{alist}

\item[\normalfont(ii)]
 $\M$ entails every relation in $\Omegamax {\stwiddle M}{\Delta_N}$.

\end{rmlist}
\end{enumerate}

\end{theorem}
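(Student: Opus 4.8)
The plan is to mirror, in the topological direction, the proof of the Piggyback Duality Theorem~\ref{thm:pig-general}, interchanging the roles of $\M,\N$ with those of $\MT,\NT$. Note first that, because the topology in our setting sits on $\MT$ and $\NT$ rather than on the structures being dualised, the co-duality is \emph{not} a formal instance of Theorem~\ref{thm:pig-general}; the argument must genuinely be re-run, with the continuity hypotheses relocated. I would dispose of the singleton case by direct appeal to Theorem~\ref{thm:copig-simple}, so assume henceforth that $\Omega$ is arbitrary and that (1)--(3) hold. Since $\MT$ has named constants, $\varepsilon_{\boldsymbol\varnothing}$ is an isomorphism whenever $\boldsymbol\varnothing\in\CX$, so it suffices to show that $\varepsilon_\X\colon\X\to\DE\X$ is onto for each non-empty $\X\in\CX$ (it is always an embedding). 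Concretely, given a $\CA$-morphism $u\colon\E\X\to\M$, the goal is to produce $x\in X$ with $u(\alpha)=\alpha(x)$ for all $\alpha\in\CX(\X,\MT)$.

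The construction is the co-dual of the commuting triangle of Figure~\ref{fig:pig-fig1}. Using the maps $\Psi_\omega^\X\colon\E\X\to\mathrm K(\X^\flat)$, $\alpha\mapsto\omega\circ\alpha$, I would first record that Condition~(1), that $\Omega\circ\Clo_1(\M)$ separates $\MT^\flat$, forces the family $\{\Psi_\omega^\X\}_{\omega\in\Omega}$ to be jointly surjective onto $\mathrm K(\X^\flat)=\CY(\X^\flat,\NT)$; this is exactly the step at which injectivity of $\N$ in $\CB$ is consumed, paralleling the use of injectivity of $\NT$ in $\CY$ in the duality proof. Joint surjectivity then lets me attempt to define $d_u\colon\mathrm K(\X^\flat)\to N$ by $d_u(\omega\circ\alpha)\coloneqq\omega(u(\alpha))$.

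The technical heart is to verify that $d_u$ is well defined and is a $\CB$-morphism, i.e.\ $d_u\in\mathrm{HK}(\X^\flat)=\CB(\mathrm K(\X^\flat),\N)$. For well-definedness, if $\omega_1\circ\alpha_1=\omega_2\circ\alpha_2$ then $\langle\alpha_1,\alpha_2\rangle$ maps $\X$ into $(\omega_1,\omega_2)^{-1}(\Delta_N)$, hence into some $s\in\Omegamax{\stwiddle M}{\Delta_N}$; by Condition~(3)(i)(a) the operations of $\MT$ are continuous and the carriers are $\CY$-morphisms, so Lemma~\ref{lem:MaxImpliesClosed} makes $s$ a closed substructure of $\MT^2$, and Condition~(3)(ii) then says $\M$ entails $s$, whence $u$ preserves $s$ and $\omega_1(u(\alpha_1))=\omega_2(u(\alpha_2))$. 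Preservation of each $r\in R^\nu$ is identical, using Condition~(3)(i)(b) and $\Omegamax{\stwiddle M}{r}$. In case~(2)(ii), to see $d_u$ preserves a unary $g\in G^\nu$ I would use that $\M^\flat$ is a structural reduct and $\omega\in\CB(\M^\flat,\N)$ to rewrite $g\circ(\omega\circ\alpha)=\omega\circ(g^{\M^\flat}\circ\alpha)$ with $g^{\M^\flat}\in\Clo_1(\M)$, so that $d_u(g\circ\beta)=\omega(u(g^{\M^\flat}\circ\alpha))=\omega(g^{\M^\flat}(u(\alpha)))=g(d_u(\beta))$, using that $u$ is a homomorphism and $\omega$ a $\CB$-morphism; the nullary case goes through the named constants, and the restriction to unary or nullary $g$ is precisely what keeps this rewriting available.

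With $d_u\in\mathrm{HK}(\X^\flat)$ in hand, I would invoke that $\NT$ fully dualises $\N$, so $\kappa_{\X^\flat}\colon\X^\flat\to\mathrm{HK}(\X^\flat)$ is an isomorphism, yielding a unique $x\in X$ with $d_u=\kappa_{\X^\flat}(x)$; evaluating on $\omega\circ\alpha$ gives $\omega(u(\alpha))=\omega(\alpha(x))$ for all $\omega\in\Omega$ and all $\alpha$. To cancel the carriers I would replace $\alpha$ by $t\circ\alpha$ for $t\in\Clo_1(\M)\subseteq\End(\MT)$ (each such $t$ is continuous by compatibility, so $t\circ\alpha\in\E\X$), obtaining $(\omega\circ t)(u(\alpha))=(\omega\circ t)(\alpha(x))$ for every $\omega\circ t\in\Omega\circ\Clo_1(\M)$; since Condition~(1) makes $\Omega\circ\Clo_1(\M)$ separate the points of $M$, this forces $u(\alpha)=\alpha(x)$, i.e.\ $u=\varepsilon_\X(x)$. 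I expect the main obstacle to be the verification that $d_u$ is a genuine $\CB$-morphism: ensuring via Lemma~\ref{lem:MaxImpliesClosed} that the maximal relations are closed (so that entailment by $\M$ is even meaningful) and pushing operation-preservation through the dichotomy in Condition~(2) are the delicate points, whereas well-definedness, joint surjectivity, and the final separation step are routine once the correct relations are identified.
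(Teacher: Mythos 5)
Your architecture coincides with the paper's own proof: the paper obtains Theorem~\ref{thm:copig-general} as the (coD) modification of the proof of Theorem~\ref{thm:pig-general}, assembled from exactly the steps you list---joint surjectivity of the family $\{\Psi_\omega^\X\}_{\omega\in\Omega}$, well-definedness of $d_u$ via maximal relations in $\Omegamax{\stwiddle M}{\Delta_N}$ (with Condition~(3)(i)(a) and Lemma~\ref{lem:MaxImpliesClosed} making entailment by $\M$ meaningful), relation preservation via $\Omegamax{\stwiddle M}{r}$, operation preservation under (2)(ii), the absence of any continuity requirement on $d_u$, and the final identification $d_u=\kappa_{\X^\flat}(x)$ followed by cancellation of the carriers using Condition~(1). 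Your inlined computations for operation preservation and for the final cancellation are faithful unwindings of the Operation Preservation Lemma~\ref{lem:pres3}(coD) and the Commuting Triangle Lemma~\ref{lem:everything3}(coD), and you correctly note that finiteness of $\Omega$ is not needed and that the named-constants hypothesis is what handles $\epsub{\boldsymbol\varnothing}$.

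There is, however, one genuine gap: joint surjectivity does \emph{not} follow from Condition~(1) and injectivity of $\N$ in $\CB$ alone, as you assert. Those hypotheses yield, via Lemma~\ref{lem:everything2}(coD)(1)(ii) and the Density Lemma~\ref{lem:SepImpliesDense}(coD) (the latter also consuming the Basic Assumption that $\NT$ fully dualises $\N$), only that the joint image $Z$ of the maps $\Psi_\omega^\X$ separates the structure $\X^\flat$ and that the \emph{substructure of $\CY(\X^\flat,\NT)$ generated by} $Z$ is all of $\CY(\X^\flat,\NT)$. To conclude $Z=\CY(\X^\flat,\NT)$ you must know that $Z$ is itself a substructure, and when $|\Omega|>1$ and $\N$ carries an operation of arity at least $2$ this can fail: a union of substructures need not be closed under a binary operation. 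This is exactly where Condition~(2) is consumed for the first time, via the Substructure Lemma~\ref{lem:everything1}(coD): under (2)(i) the claim is vacuous, while under (2)(ii) each image $\Psi_\omega^\X(\CX(\X,\MT))$ is a substructure because $\Psi_\omega^\X$ is a $\CB$-morphism (Morphism Lemma~\ref{lem:PhiMorphism}, which needs $\omega\in\CB(\M^\flat,\N)$), and the union of these images is closed under $G^\nu$ precisely because every operation in $G^\nu$ is unary or nullary. So the unary/nullary restriction does two jobs, of which you identified only the one in the operation-preservation rewriting. A second, smaller slip: the nullary case does not ``go through the named constants''---named constants of $\MT$ serve only to handle the empty structure; preservation of a nullary $g\in G^\nu$ by $d_u$ follows from the same rewriting as the unary case, using that $g^{\M^\flat}$ is a constant term function of $\M$. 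Both repairs are one-liners available under the stated hypotheses, after which your proof agrees with the paper's.
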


The Piggyback Co-duality Theorem can be applied in two quite different ways. It can be used to prove that a duality given by the Piggyback Duality Theorem is in fact full. It is used in this way in the $\CCD$-based Piggyback Strong Duality Theorem~\ref{thm:DLfullpig1} and therefore also in its specialisation to Ockham algebras,
Theorem~\ref{ex:Ockham}. Alternatively, it can be used in a topology-swapping situation, where we take a duality that has been obtained via the Piggyback Duality Theorem and swap the topology from the `relational' category to the `algebraic' category.
 This leads to the $\CCD$-based and $\CS$-based Piggyback Co-duality Theorems~\ref{thm:DLcopig-general} and~\ref{thm:SLcopig}.

In certain applications, in particular to $\CCD$-based prevarieties, it may be convenient to make use of the following consequences of the piggyback duality and co-duality theorems, stated as a single corollary.  In it, we do as we shall do
subsequently and flag the duality and co-duality  versions with~(D) and~(coD).
The corollary follows from Lemma~\ref{lem:sublemma}. 

\begin{corollary}  \label{cor:sqsubseteq}
The setting is that of  the Basic Assumptions.
\begin{Dlist}
\item[\normalfont (D)]
Assume that $\M$ has a structural reduct $\M^\flat$ in $\CB$ and let  $\Omega$ be a subset of $ \CB(\M^\flat, \N)$.
Assume that $\NT = \langle N;G^\nu,R^\nu,\Tp\rangle$. Then $\MT$ dualises $\M$ if
Conditions {\upshape (0), (1), (2)} and {\upshape (3)(i)} of Theorem {\upshape \ref{thm:pig-simple}} \textup(with $\Omega= \{\omega\}$\textup) or of
Theorem~{\upshape\ref{thm:pig-general}} hold and also
\begin{newlist}

 \item[\quad {\normalfont (3)(ii)}$'$] 
$R$ contains a reflexive, antisymmetric binary relation $\sqsubseteq$.

\end{newlist}

\item[\normalfont (coD)]
Assume  that $\MT$ has named constants.
Assume that $\MT$ has a structural reduct $\MT^\flat$ in $\CY$ and let $\Omega$ be a subset of $ \CY(\MT^\flat, \NT)$.
Assume that $\N =\langle N; G^\nu, R^\nu\rangle$.
Then $\MT$ co-dualises $\M$ if
Conditions {\upshape (1), (2)} and {\upshape (3)(i)} in either Theorem~{\upshape\ref{thm:copig-simple}}  \textup(with $\Omega= \{\omega\}$\textup) or of Theorem~{\upshape\ref{thm:copig-general}} hold and  also 
\begin{newlist}
\item[\quad \normalfont (3)(ii)$'$]
$R$ contains a reflexive, antisymmetric binary relation $\sqsubseteq$.
\end{newlist}
\end{Dlist}
\end{corollary}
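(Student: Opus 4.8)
The plan is to derive the corollary from the four piggyback theorems by showing that, in each part, the single extra hypothesis~(3)(ii)$'$ forces the corresponding Condition~(3)(ii) of the relevant theorem, which may then be quoted directly. Thus for part~(D) I would check that Conditions~(0), (1), (2) and~(3)(i) together with~(3)(ii)$'$ imply Condition~(3)(ii) of Theorem~\ref{thm:pig-simple} (when $\Omega=\{\omega\}$) or of Theorem~\ref{thm:pig-general}, and conclude that $\MT$ dualises~$\M$; part~(coD) is then the role-swapped mirror image, in which $\sqsubseteq$ lies in the type of~$\M$ rather than of~$\MT$, the entailing structure is~$\M$ in place of~$\MT$, and one appeals to Theorem~\ref{thm:copig-simple} or~\ref{thm:copig-general}. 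The only genuinely new feature on the co-duality side is that the maximal relations must be \emph{topologically closed} substructures of~$\MT^n$; this is exactly what the Closed Maximal Relations Lemma~\ref{lem:MaxImpliesClosed} delivers, its continuity hypothesis being guaranteed by Condition~(3)(i)(a). Accordingly it suffices to treat~(D), and I concentrate on that.

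Since $\sqsubseteq$ is a relation in the type of~$\MT$, it is entailed by~$\MT$; and as entailment is closed under permutation of coordinates, the converse~$\sqsupseteq$ is entailed as well. Reflexivity gives $\Delta_M\subseteq{\sqsubseteq}$ and antisymmetry gives ${\sqsubseteq}\cap{\sqsupseteq}=\Delta_M$, so in particular $\Delta_M$ is entailed (as it always is). By Remark~\ref{rem:rnotinDeltaM} I may discard relations contained in~$\Delta_M$ and so reduce to showing that $\MT$ entails every relation $s\in\Omegamax{\M}{\Delta_N}$ with $s\nsubseteq\Delta_M$; in the single-carrier case $s$ ranges over the substructures of $\M^2$ maximal in $\ker(\omega)=(\omega,\omega)^{-1}(\Delta_N)$.

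The heart of the argument, and the step I expect to be the main obstacle, is precisely this: to entail each such maximal substructure $s$ of $\M^2$ lying in $u\coloneqq(\omega_1,\omega_2)^{-1}(\Delta_N)$ from the entailment of $\sqsubseteq$ and $\sqsupseteq$ alone. Two things make this delicate. First, $u$ is in general only a substructure of $(\M^\flat)^2$ and not of~$\M^2$ (the operations of~$\M$ need not respect $\ker(\omega)$), so $s$ cannot be obtained merely by intersecting $u$ with~$\sqsubseteq$. Second, $s$ is the very relation to be entailed, so it cannot be manufactured by intersecting it with known-entailed relations. My plan is to use the antisymmetry of $\sqsubseteq$ together with the maximality of $s$ to pin $s$ down: the expected key property is that maximality \emph{orients} $s$, plausibly forcing each pair in $s$ to be $\sqsubseteq$-comparable and forcing $s$ to agree with $s\cap{\sqsubseteq}$ or with $s\cap{\sqsupseteq}$. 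Granting such an orientation, one aims to recover $s$ from $\sqsubseteq$ (or $\sqsupseteq$) using the standard entailment constructs (intersection, converse, and the coordinatewise action of the unary term functions of~$\MT$). I would isolate this combinatorial core as a separate lemma, which is exactly the part played by Lemma~\ref{lem:sublemma}: a reflexive, antisymmetric, entailed binary relation entails every maximal substructure of $\M^2$ inside a preimage of a diagonal. Feeding this back establishes Condition~(3)(ii), and the cited piggyback theorem then yields~(D); the co-duality half~(coD) follows by the same combinatorial lemma applied on the $\MT$-side, with closedness of the maximal relations supplied by Lemma~\ref{lem:MaxImpliesClosed}.
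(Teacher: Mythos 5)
There is a genuine gap, and it stems from a misreading of where the relation $\sqsubseteq$ lives. In Condition (3)(ii)$'$ the set $R$ is $R^\nu$: in part (D) the relation $\sqsubseteq$ belongs to the type of $\NT$, and in part (coD) to the type of $\N$ --- so $\sqsubseteq$ is a reflexive, antisymmetric relation on the base set $N$, not on $M$ (the paper says this explicitly for (coD) in the sentence preceding Lemma~\ref{lem:sublemma}, and for (coD) it is also what makes $\sqsubseteq$ automatically topologically closed, since $\NT$ is an alter ego of $\N$). This placement is what makes the corollary a two-line consequence of the theorems: since $\sqsubseteq\in R^\nu$, Condition (3)(i) already asserts that $\MT$ entails every relation in $\Omegamax{\M}{\sqsubseteq}$ (and, via converses, in $\Omegamax{\M}{\sqsupseteq}$), which is precisely the hypothesis of the $\sqsubseteq$-Lemma~\ref{lem:sublemma}; that lemma then yields entailment of every relation in $\Omegamax{\M}{\Delta_N}$, i.e.\ the missing Condition (3)(ii) (in the single-carrier case, Condition (3)(ii)(a)), and Theorem~\ref{thm:pig-simple} or~\ref{thm:pig-general} (resp.~\ref{thm:copig-simple} or~\ref{thm:copig-general}) applies. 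Under your reading --- an entailed reflexive antisymmetric relation in the type of $\MT$ on $M$ --- the statement would in fact be false: $\Delta_M$ is reflexive, antisymmetric and compatible with any $\M$, yet it exerts no control whatever over the substructures of $\M^2$ maximal in $\ker(\omega)$, so Conditions (0)--(3)(i) alone would have to suffice, which they do not.

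Consequently the ``heart of the argument'' as you set it up --- that maximality \emph{orients} $s$, forcing its pairs to be $\sqsubseteq$-comparable and $s$ to coincide with $s\cap{\sqsubseteq}$ or $s\cap{\sqsupseteq}$ --- is not only unproved but is not the mechanism at all. The actual proof of Lemma~\ref{lem:sublemma} runs: given $s$ maximal in $(\omega_1,\omega_2)^{-1}(\Delta_N)$, reflexivity of $\sqsubseteq$ on $N$ gives $s\subseteq(\omega_1,\omega_2)^{-1}(\sqsubseteq)$ and $s\subseteq(\omega_1,\omega_2)^{-1}(\sqsupseteq)$; extend $s$ to substructures $t_1$, $t_2$ of $\M^2$ maximal in these two preimages, so $t_1\in\Omegamax{\M}{\sqsubseteq}$ and $t_2\in\Omegamax{\M}{\sqsupseteq}$ are entailed by (3)(i); antisymmetry of $\sqsubseteq$ forces $t_1\cap t_2\subseteq(\omega_1,\omega_2)^{-1}(\Delta_N)$, whence maximality of $s$ gives $s=t_1\cap t_2$, entailed as an intersection of entailed relations. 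Note that both of the ``delicacies'' you flag simply dissolve: nothing is ever intersected with $\ker(\omega)$ (so it is irrelevant that $\ker(\omega)$ need not be a substructure of $\M^2$), and there is no circularity, since $s$ is recovered as the intersection of two \emph{other} relations whose entailment is already in hand. Your outer scaffolding --- reduce (3)(ii)$'$ to Condition (3)(ii), cite the four theorems, and use the Closed Maximal Relations Lemma~\ref{lem:MaxImpliesClosed} (with continuity from (3)(i)(a)) to make entailment by $\M$ meaningful on the (coD) side --- does match the paper, but the core step as you propose it would fail.
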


%%%%%%%%%%%%%%%%%%%%%%%%%%%%%%%%%%%%%%%%%%%%%%%%%%
When $\M$ has a plentiful supply of both unary term functions and endomorphisms, we can combine the single-carrier theorems~\ref{thm:pig-simple} and~\ref{thm:copig-simple} to obtain various piggyback strong duality theorems---we present
three such under a common umbrella. Parts (I) and (II) of Theorem~\ref{thm:pigstrong1} are typically applied when we are piggybacking on Priestley duality for distributive lattices with or without bounds.
Part (III) is typically applied when we are piggybacking on Hofmann--Mislove--Stralka duality for semilattices with or without bounds~\cite{HMS74} (see also~\cite[2.4 (p.~157)]{DW83}).

%%%%%%%%%%%%%%%%%%%%
Note that, in the following two results, Lemma~\ref{lem:liftconstants} guarantees that we do not need to add any assumptions about named constants.

\begin{theorem}[Piggyback Strong Duality Theorem] \label{thm:pigstrong1}
The setting for this theorem is the Basic Assumptions.
Assume that $\M$ and $\MT$ have  structural reducts $\M^\flat$ in $\CB$ and  $\MT^\flat$ in~$\CY$, respectively, and let $\omega\in \CB(\M^\flat, \N)\cap \CY(\MT^\flat, \NT)$.

Consider the following three sets of additional conditions.
\begin{Ilist}
\item[\normalfont (I)]
\begin{enumerate}[\normalfont (1)]
\item
$\N$ is a total algebra, $\NT = \langle N;G^\nu,R^\nu,\Tp\rangle$, and
\item
\begin{iiilist}
\item[\normalfont (i)]
$\omega\circ \Clo_1(\MT)$ separates the points of $M$,

\item[\normalfont(ii)]
$\omega\circ \Clo_1(\M)$ separates the structure~$\MT^\flat$, and

\item[\normalfont(iii)] $\MT$ entails every relation in $\omegamax {\M} r$, for each
$r\in R^\nu$.
\end{iiilist}
\end{enumerate}
\item[\normalfont (II)]
\begin{enumerate}[\normalfont (1)]
\item
$\N =\langle N; G^\nu, R^\nu\rangle$, $\NT$ is a total algebra, and
\item
\begin{iiilist}
\item[\normalfont (i)]
$\omega\circ \Clo_1(\MT)$ separates the structure $\M^\flat$,

\item[\normalfont (ii)]
$\omega\circ \Clo_1(\M)$ separates the points
of~$M$,
and

\item[\normalfont (iii)] if $R^\nu
\ne \varnothing$, the operations in the type of $\MT$ are continuous and $\MT$ entails every relation in $\omegamax {\M} r$, for each $r\in R^\nu$.
\end{iiilist}
\end{enumerate}

\item[\normalfont (III)]
\begin{enumerate}[\normalfont (1)]
\item Both $\N$ and $\NT$ are total algebras.

\item
\begin{iiilist}
\item[\normalfont (i)]
$\omega\circ \Clo_1(\MT)$ separates the points of $M$, and

\item[\normalfont (ii)]
$\omega\circ \Clo_1(\M)$ separates the points of $M$.
\end{iiilist}
\end{enumerate}
\end{Ilist}
Assume that {\upshape(I)}, {\upshape(II)} or {\upshape(III)} applies.
Then
\begin{enumerate}[\qquad \ \ \normalfont(a)]

\item
$\MT$ fully dualises $\M$,

\item $\M$ is injective in $\CA$ and $\MT$ is injective in $\CX$, and consequently the duality is strong, and

\item for all $\A\in \CA$ and all $\X\in \CX$,
\[
\D\A^\flat \cong \mathrm H(\A^\flat) \ \text{  and } \ \E\X^\flat\cong
\mathrm K(\X^\flat)
\]
via the maps $\Phi_\omega^\A$ and $\Psi_\omega^\X$, respectively.

\end{enumerate}
\end{theorem}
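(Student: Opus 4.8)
The plan is to derive Theorem~\ref{thm:pigstrong1} by assembling the single-carrier duality and co-duality theorems together with the structural facts recorded in Section~\ref{sec:scene}, treating the three cases (I), (II), (III) in a uniform way wherever possible. The core observation is that in each of the three cases the hypotheses are tailored precisely so that \emph{both} Theorem~\ref{thm:pig-simple} and Theorem~\ref{thm:copig-simple} apply to the pair $\omega\in\CB(\M^\flat,\N)\cap\CY(\MT^\flat,\NT)$. First I would verify conclusion~(a). For the duality half, I check each numbered hypothesis of Theorem~\ref{thm:pig-simple}: condition~(0) (continuity of $\omega$) comes from $\omega\in\CY(\MT^\flat,\NT)$ together with the compatibility built into the alter-ego assumption; condition~(1) follows from the point-separation hypothesis on $\omega\circ\Clo_1(\MT)$, noting that for a total algebra $\N$ separating the structure reduces to separating points; condition~(2) is met via~(ii) since $\MT^\flat$ is a structural reduct in $\CY$ with $\omega\in\CY(\MT^\flat,\NT)$; and condition~(3)(i) is exactly the entailment hypothesis on $\omegamax{\M}r$, while~(3)(ii) follows from~(3)(ii)(b) using the point-separation of $\omega\circ\Clo_1(\M)$ and the remark after Theorem~\ref{thm:pig-simple}. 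The co-duality half is the mirror image: I verify the hypotheses of Theorem~\ref{thm:copig-simple}, using Lemma~\ref{lem:liftconstants} to supply the named-constants assumption for free (this is the content of the parenthetical remark preceding the theorem). The roles of $\M$ and $\MT$, and of $\N$ and $\NT$, swap, and the separation and entailment conditions line up symmetrically. Cases (I), (II), (III) differ only in which of $\N$, $\NT$ is a total algebra versus purely relational, which determines whether we invoke branch~(i) or~(ii) of condition~(2) in each theorem, and whether the entailment conditions on the relations $R^\nu$ are vacuous (as in~(III), where both are algebras and $R^\nu=\varnothing$). Once both $\MT$ dualises $\M$ and $\MT$ co-dualises $\M$ are established, conclusion~(a)---that $\MT$ fully dualises $\M$---is immediate from the definition of full duality.

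For conclusion~(b), I would argue that the isomorphisms $\Phi_\omega^\A$ and $\Psi_\omega^\X$ transport injectivity across the piggyback bridge. Since $\NT$ is injective in $\CY$ and $\N$ is injective in $\CB$ by the Basic Assumptions, and since conclusion~(c) identifies $\D\A^\flat$ with $\mathrm H(\A^\flat)$ and $\E\X^\flat$ with $\mathrm K(\X^\flat)$ functorially, injectivity of $\N$ in $\CB$ yields injectivity of $\M$ in $\CA$ and injectivity of $\NT$ in $\CY$ yields injectivity of $\MT$ in $\CX$. Concretely, to show $\MT$ is injective in $\CX$ I would take an embedding $\X\hookrightarrow\Y$ in $\CX$ and a morphism $\X\to\MT$, pass to the $\flat$-reducts where the corresponding lifting problem in $\CY$ is solvable by injectivity of $\NT$, and then use the separation and entailment conditions to show that the lifted $\CY$-morphism actually preserves the full structure of $\MT$ and so lifts back to a $\CX$-morphism $\Y\to\MT$. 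Because $\M$ is a total structure throughout, definition~(2) of strong duality (injectivity of $\MT$ in $\CX$) coincides with definition~(1), so strongness of the duality follows at once from injectivity of $\MT$ in $\CX$, as asserted.

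For conclusion~(c), I would establish the stated isomorphisms $\D\A^\flat\cong\mathrm H(\A^\flat)$ and $\E\X^\flat\cong\mathrm K(\X^\flat)$ by showing that the natural transformations $\Phi_\omega^-$ and $\Psi_\omega^-$ are componentwise isomorphisms. That $\Phi_\omega^\A$ is well defined follows from $\omega\in\CB(\M^\flat,\N)$; that it is a bijection onto $\CB(\A^\flat,\N)=\mathrm H(\A^\flat)$ is precisely the surjectivity secured by the separation hypothesis~(1) together with the entailment conditions, which is the engine already exploited in proving~(a). Injectivity of $\Phi_\omega^\A$ again follows from $\omega\circ\Clo_1(\M)$ (or $\Clo_1(\MT)$) separating points. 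That $\Phi_\omega^\A$ respects the structure of $\CY$---so that it is a genuine isomorphism and not merely a bijection---comes from $\omega$ being a morphism on both sides and from the entailment conditions guaranteeing that the relevant relations and operations are preserved; the naturality established in the discussion preceding Theorem~\ref{thm:copig-simple} then upgrades this to an isomorphism of functors. The case of $\Psi_\omega^\X$ is symmetric.

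\textbf{Main obstacle.} The delicate point is conclusion~(b), specifically verifying that $\MT$ (and dually $\M$) is injective rather than merely that the duality is full. Fullness and strongness are genuinely different for infinite generating structures, and the argument must use the entailment of the maximal relations in $\omegamax{\M}r$ and in $\ker(\omega)$ to guarantee that a morphism lifted at the $\flat$-level, where injectivity of $\N$ or $\NT$ lives, actually respects \emph{all} of the structure of $\M$ or $\MT$, not just the reduct structure. Managing the topology carefully---ensuring that the lifted map is continuous and that the substructures in question are \emph{closed}, for which Lemma~\ref{lem:MaxImpliesClosed} is the relevant tool---is where the bulk of the technical care will be needed, and this is exactly the part that would be modularised into the lemmas of Section~\ref{sec:proofs}.
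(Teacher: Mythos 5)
Your decomposition coincides with the paper's: conclusion~(a) is obtained by checking the hypotheses of the single-carrier Theorems~\ref{thm:pig-simple} and~\ref{thm:copig-simple} (with Lemma~\ref{lem:liftconstants} supplying named constants, and with (III) handled as a special case of (I) and (II)); conclusion~(c) by proving that $\Phi_\omega^\A$ and $\Psi_\omega^\X$ are isomorphisms via the Morphism, Existence and Joint Surjectivity Lemmas; and conclusion~(b) by transporting injectivity across these bijections, which is exactly the content of the paper's Injectivity Lemma~\ref{lem:inj}. Two of your concrete steps, however, would fail as written.

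First, in~(c) you upgrade the bijection $\Phi_\omega^\A$ to an isomorphism by appealing to preservation (``$\omega$ being a morphism on both sides'') plus naturality. That is not enough: in case~(I) the codomain category $\CY$ carries the relations $R^\nu$, and a continuous bijective $\CY$-morphism need not have a morphism inverse; you must prove that $\Phi_\omega^\A$ \emph{reflects} each $r\in R^\nu$. The paper does this either functorially---once $\Psi_\omega^\X$ is an isomorphism (immediate in case~(I), where $\N$ is a total algebra, so a bijective homomorphism is automatically an isomorphism), one computes $\D\A^\flat \cong \mathrm H(\mathrm K(\D\A^\flat)) \cong \mathrm H(\E{\D\A}^\flat) \cong \mathrm H(\A^\flat)$---or directly: from $(x_1,\dots,x_n)\notin r$ pick $a\in A$ with $(x_1(a),\dots,x_n(a))\notin r$ in $\MT^\flat$, and use hypothesis~(I)(2)(ii), separation of the \emph{structure} $\MT^\flat$ rather than of points, to find $t\in\Clo_1(\M)$ with $\bigl(\omega(t(x_1(a))),\dots,\omega(t(x_n(a)))\bigr)\notin r$; evaluating at $t(a)$ then shows $(\Phi_\omega^\A(x_1),\dots,\Phi_\omega^\A(x_n))\notin r$. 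Your sketch never invokes structure-separation, which is precisely the hypothesis distinguishing cases (I) and (II) from (III), so this step is a genuine gap.

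Second, in~(b) your lifting mechanism is incoherent as stated: the map supplied by injectivity of $\NT$ is a $\CY$-morphism $\gamma$ with codomain $\NT$, so there is nothing for it to ``preserve of the full structure of $\MT$''. The paper's Injectivity Lemma instead pulls $\gamma$ back along $\Psi_\omega$: given an embedding $\varphi\colon\X\to\MT^S$ and $u\in\CX(\X,\MT)$, extend $\omega\circ u$ to $\gamma\colon(\MT^\flat)^S\to\NT$ by injectivity of $\NT$, use surjectivity of $\Psi_\omega^{\stwiddle M^S}$ (already available from~(c)) to find $v\in\CX(\MT^S,\MT)$ with $\omega\circ v=\gamma$, and conclude $u=v\circ\varphi$ from injectivity of $\Psi_\omega^\X$. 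No entailment bookkeeping or delicate topology enters at this stage (a continuous bijection between compact Hausdorff spaces is a homeomorphism), so your closing ``main obstacle'' paragraph locates the remaining difficulty in the wrong place: once~(c) is fully proved, (b) is a formal transport argument, and the real technical weight sits in the relation-reflection step above and in the lemmas underpinning the bijectivity of $\Phi_\omega^\A$ and $\Psi_\omega^\X$.
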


The following result is an immediate consequence of the Piggyback Strong Duality Theorem.
In its statement, we need to swap a topology between two total structures with the same underlying set.  Consequently, we need to suspend
 briefly the notation $\MT$ for an alter ego of a structure~$\M$. Given a structure $\M = \langle M; G, H, R\rangle$ and a topology $\Tp$ on~$M$, we define $\M^\Tp \coloneqq  \langle M; G, H, R, \Tp\rangle$.
We will also have occasion to use $\M_\Tp$ as an alternative notation for~$\M^\Tp$.
Note that if $\M_1^\Tp$ and $\M_2^\Tp$ are topological structures
 with the same underlying set~$M$, then $\M_2^\Tp$ is an alter ego of $\M_1$ if and only if $\M_1^\Tp$ is an alter ego of~$\M_2$.

\begin{theorem}[Two-for-one Piggyback Strong Duality Theorem]
 \label{thm:two4one}
Let $\M_1^\Tp$, $\M_2^\Tp$, $\N_1^\Tp$ and $\N_2^\Tp$ be total topological structures
and assume that $\M_2^\Tp$ and $\N_2^\Tp$ are alter egos of $\M_1$ and $\N_1$, respectively. For $i\in \{1, 2\}$, define $\CA_i\coloneqq \ISP(\M_i)$, $\CB_i\coloneqq \ISP(\N_i)$, $\CX_i\coloneqq \IScP(\M_i^\Tp)$ and $\CY_i \coloneqq  \IScP(\N_i^\Tp)$.
Assume that $\N_1^\Tp$ fully dualises $\N_2$ with $\N_1^\Tp$ injective in $\CY_1$ and $\N_2$ injective in $\CB_2$, and that $\N_2^\Tp$ fully dualises $\N_1$ with $\N_2^\Tp$ injective in $\CY_2$ and $\N_1$ injective in $\CB_1$.
 Assume that $\M_1$ and $\M_2$ have structural reducts $\M_1^\flat$ and  $\M_2^\flat$ with $(\M_1^\flat)^\Tp$ in~$\CY_1$ and $(\M_2^\flat)^\Tp$ in~$\CY_2$. Let
 $\omega$ be a continuous map in $\CB_1(\M_1^\flat, \N_1)\cap \CB_2(\M_2^\flat, \N_2)$.

Consider the following two sets of additional conditions.

\begin{Ilist}
\item[\normalfont (I)]

\begin{enumerate}[\normalfont (1)]
\item
 $\N_1$ is a total algebra  and $\N_2 =\langle N; G^\nu, R^\nu\rangle$; 
 
\item
\begin{iiilist}

\item[\normalfont(i)]
$\omega\circ \Clo_1(\M_2)$ separates the points of $M$,

\item[\normalfont(ii)]
$\omega\circ \Clo_1(\M_1)$ separates the structure~$\M_2^\flat$, and

\item[\normalfont(iii)]
both
$\M_2$ and $\M_2^\Tp$ entail every relation in $\omegamax {\M_1} r$, for each 
$r\in R^\nu$.

\end{iiilist}
\end{enumerate}

\item[\normalfont (II)]
\begin{enumerate}[\normalfont (1)]
\item Both $\N_1$ and $\N_2$ are total algebras;  

\item
\begin{iiilist}
\item[\normalfont(i)]
$\omega\circ \Clo_1(\M_2)$ separates the points of $M$, and

\item[\normalfont(ii)]
$\omega\circ \Clo_1(\M_1)$ separates the points of $M$.
\end{iiilist}
\end{enumerate}
\end{Ilist}

Assume that {\upshape(I)} or {\upshape(II)} applies. Then
\begin{enumerate}[\qquad \ \ \normalfont(a)]
\item
$\M_2^\Tp$ fully dualises $\M_1$ with $\M_1$ injective in $\CB_1$ and $\M_2^\Tp$ injective in~$\CX_2$,

\item $\M_1^\Tp$ fully dualises $\M_2$ with $\M_2$ injective in $\CB_2$ and $\M_1^\Tp$ injective in~$\CX_1$,
and

\item the isomorphisms in the base categories given in Theorem~
{\upshape \ref{thm:pigstrong1}(c)}
 apply to both the duality between $\CA_1$ and $\CX_2$, with base categories $\CB_1$ and $\CY_2$, and the duality between $\CA_2$ and $\CX_1$, with base categories $\CB_2$ and~$\CY_1$.

\end{enumerate}

\end{theorem}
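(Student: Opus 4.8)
\emph{The plan} is to deduce Theorem~\ref{thm:two4one} by applying the Piggyback Strong Duality Theorem~\ref{thm:pigstrong1} twice, once in each direction, exploiting the topology-swapping symmetry recorded immediately before the statement: since $\M_1^\Tp$ and $\M_2^\Tp$ share the underlying set $M$, the hypothesis that $\M_2^\Tp$ is an alter ego of $\M_1$ is equivalent to $\M_1^\Tp$ being an alter ego of $\M_2$, and likewise $\N_1^\Tp$ is an alter ego of $\N_2$. For the \emph{first} application I take $\M \coloneqq \M_1$ with alter ego $\MT \coloneqq \M_2^\Tp$, and $\N \coloneqq \N_1$ with alter ego $\NT \coloneqq \N_2^\Tp$, so that $\CA = \CA_1$, $\CX = \CX_2$, $\CB = \CB_1$ and $\CY = \CY_2$. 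For the \emph{second} application I interchange the subscripts, taking $\M \coloneqq \M_2$, $\MT \coloneqq \M_1^\Tp$, $\N \coloneqq \N_2$ and $\NT \coloneqq \N_1^\Tp$, so that $\CA = \CA_2$, $\CX = \CX_1$, $\CB = \CB_2$ and $\CY = \CY_1$.

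First I would verify the Basic Assumptions in each application. For the first, clause~(i) asks that $\N_2^\Tp$ yield a full duality on $\CB_1$ and clause~(ii) that $\N_2^\Tp$ be injective in $\CY_2$ and $\N_1$ injective in $\CB_1$; these are exactly the supplied hypotheses on the base dualities, and the partner hypotheses cover the second application symmetrically. The carrier $\omega$ lies in $\CB_1(\M_1^\flat, \N_1)$ by assumption, and it lies in $\CY_2((\M_2^\flat)^\Tp, \N_2^\Tp)$ because it is continuous and, being a member of $\CB_2(\M_2^\flat, \N_2)$, structure-preserving; the analogous membership for the second application holds likewise. The named-constants hypothesis needed to invoke the co-duality half of Theorem~\ref{thm:pigstrong1} is not an extra assumption: Lemma~\ref{lem:liftconstants} supplies it in each case, from the existence of the structural reducts together with the fact that $\N_2^\Tp$ co-dualises $\N_1$ (resp.\ $\N_1^\Tp$ co-dualises $\N_2$).

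Next I would match the condition-sets. Under hypothesis~(I) of the present theorem, $\N_1$ is a total algebra while $\N_2 = \langle N; G^\nu, R^\nu\rangle$; hence in the first application the pair $(\N, \NT) = (\N_1, \N_2^\Tp)$ fits case~(I) of Theorem~\ref{thm:pigstrong1}, whereas in the second the roles are reversed, $\NT = \N_1^\Tp$ being a total algebra, which fits case~(II). Under hypothesis~(II), both $\N_1$ and $\N_2$ are total algebras, so both applications fall under case~(III). The separation conditions translate directly, using $\Clo_1(\M_i^\Tp) = \Clo_1(\M_i)$ and the fact that whether a family separates a structure does not depend on the topology: conditions~(2)(i) and~(2)(ii) of the present theorem supply, in each application, precisely the two separation clauses demanded by the relevant case of Theorem~\ref{thm:pigstrong1}.

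The step requiring the most care---the crux of the argument---is the bookkeeping for the entailment conditions under hypothesis~(I). In the first application (case~(I) of Theorem~\ref{thm:pigstrong1}) the duality half needs the alter ego $\M_2^\Tp$ to entail every relation in $\omegamax {\M} r = \omegamax {\M_1} r$ for $r \in R^\nu$, while its co-duality half is relationally vacuous because here $\N = \N_1$ is a total algebra. In the second application (case~(II)) the situation is reversed: the duality half is relationally vacuous because $\NT = \N_1^\Tp$ is a total algebra, while the co-duality half needs the base structure $\M_2$ to entail every relation in $\omegamax {\stwiddle M} r$; and since $\MT = \M_1^\Tp$ has non-topological reduct $\M_1$, this set is exactly $\omegamax {\M_1} r$. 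Thus the single clause~(I)(2)(iii)---``both $\M_2$ and $\M_2^\Tp$ entail every relation in $\omegamax {\M_1} r$''---delivers the alter-ego entailment needed by the first application through its $\M_2^\Tp$ half and the base-structure entailment needed by the second through its $\M_2$ half, which explains the presence of \emph{both} conditions. Once this correspondence is checked, the conclusions are read off directly: conclusions~(a) and~(b) are conclusions~(a) and~(b) of Theorem~\ref{thm:pigstrong1} in the two applications, and~(c) is conclusion~(c) of that theorem applied in each direction.
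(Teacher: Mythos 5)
Your proposal is correct and is essentially the paper's own route: the paper derives Theorem~\ref{thm:two4one} as an immediate consequence of Theorem~\ref{thm:pigstrong1}, applied twice via the topology-swapping remark (with $(\M_1,\M_2^\Tp,\N_1,\N_2^\Tp)$ and then $(\M_2,\M_1^\Tp,\N_2,\N_1^\Tp)$), exactly as you do. Your condition-matching is the intended one---hypothesis~(I) feeding case~(I) in one direction and case~(II) in the other, hypothesis~(II) feeding case~(III) both ways---and your explanation of why clause~(I)(2)(iii) requires entailment by \emph{both} $\M_2$ (for the co-duality half of the second application, via the single-carrier co-duality conditions) and $\M_2^\Tp$ (for the duality half of the first) correctly captures the underlying mechanism.
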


In practice, when applying the Two-for-one Piggyback Strong Duality Theorem, it suffices to know that $\N_2^\Tp$ strongly dualises $\N_1$. Indeed, if
$\N_1 = \langle N; G_1^\nu, R_1^\nu\rangle$  and $\N_2 = \langle N; G_2^\nu, R_2^\nu\rangle$ are finite compatible total structures with $R_1^\nu$ and $R_2^\nu$ finite and both $\N_1$ and $\N_2$ have named constants, then $\N_2^\Tp$ strongly dualises $\N_1$ if and only if $\N_1^\Tp$ strongly dualises $\N_2$, by~\cite[Theorem 6.9]{D06}.

%%%%%%%%%%%%%%%%%%%%%%%%%%%%%%%%%%%%%%%%%%%%%%%%%%%%%%%%%%%%%%%%%%%%%
\section{Applications to distributive-lattice-based algebras}\label{sec:DL}
%%%%%%%%%%%%%%%%%%%%%%%%%%%%%%%%%%%%%%%%%%%%%%%%%%%%%%%%%%%%%%%%%%%%%

In this section we consider piggybacking on Priestley duality, demonstrating in particular how the original piggyback duality theorems in \cite{DW85,DW83PB,DP87}
for algebras fit into the general framework for piggybacking developed in this paper.  Our co-duality results, by contrast, are wholly new.
We shall restrict attention to bounded distributive lattices.
Analogous theorems are available when one bound is omitted from the type, or both bounds are omitted.

As before, $\CCD$ will denote the variety of bounded distributive lattices.
This is generated, as quasivariety, by~$\Dalg$, the two-element algebra in $\CCD$.
We take $\N$ to be $\Dalg$ and its alter ego $\NT$ to be the discretely topologised two-element chain $\DT = \langle \{0, 1\}; \leq, \Tp\rangle$, so that $ \CY\coloneqq  \IScP(\DT)$ is  the category $\CP$ of Priestley spaces.
Here all the conditions demanded of $\N$, $\NT$, $\CB=\ISP(\N)$ and $\CY = \IScP(\NT)$ in the Basic Assumptions are satisfied.
 We say that a structure $\M$ is \emph{$\CCD$-based} if it has a structural reduct $\M^\flat$ in~$\CCD$.

The $\CCD$-based Piggyback Duality Theorem~\ref{thm:DLpig}, which is well known and has been used often in the case that $\M$ is a \emph{total algebra}, follows immediately from Corollary~\ref{cor:sqsubseteq}(D).
In the single carrier case $\Omega = \{\omega\}$ (and similarly in Theorem~\ref{thm:DLcopig-general}), Condition~(2) can be restricted to relations $r \nsubseteq \Delta_M$; recall Remark~\ref{rem:rnotinDeltaM}.

\begin{theorem}[$\CCD$-based Piggyback Duality Theorem] \label{thm:DLpig}
Let $\M$ be a $\CCD$-based total structure with structural reduct $\M^\flat$ in~$\CCD$.
 Then
an alter ego $\MT$ of~$\M$
 dualises $\M$ provided that there is a finite subset $\Omega$ of $ \CCD(\M^\flat, \Dalg )$ such that

\begin{enumerate}[\ \normalfont (1)]

\item[\normalfont(0)] each $\omega\in \Omega$ is continuous with respect to the topologies on $\MT$ and $\DT  $,

\item
$\Omega\circ \Clo_1(\MT)$ separates the points of~$M$, and

\item
 $\MT$ entails every relation in $\Omegamax {\M} \leq$. 
 
\end{enumerate}

\end{theorem}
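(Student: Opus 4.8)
The plan is to derive this theorem as a direct specialisation of Corollary~\ref{cor:sqsubseteq}(D), with the base structures fixed as $\N = \Dalg$ and $\NT = \DT = \langle\{0,1\};\leq,\Tp\rangle$. First I would observe that the Basic Assumptions are met by this choice: $\DT$ yields a full (indeed strong) duality on $\CCD$ by Priestley duality, and both the injectivity of $\DT$ in $\CY = \CP$ and of $\Dalg$ in $\CCD$ hold---$\Dalg$ is injective in $\CCD$ because the two-element chain is injective in the variety of bounded distributive lattices, and $\DT$ is injective in the category of Priestley spaces. With these in hand, the task reduces to matching the three hypotheses $(0)$, $(1)$, $(2)$ of the present theorem to the hypotheses required by Corollary~\ref{cor:sqsubseteq}(D).

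The key point is the translation of the conditions. Since $\DT$ is the discretely topologised two-element chain, its type is $\langle\,\emptyset, \{\leq\}, \Tp\,\rangle$, so $G^\nu = \varnothing$ and $R^\nu = \{\leq\}$; in particular $\NT$ is purely relational. This immediately verifies Condition~(2)(i) of Theorem~\ref{thm:pig-general}, so Condition~(2) of the Corollary is satisfied with no further work. Condition~(0) here is exactly Condition~(0) of Theorem~\ref{thm:pig-general}. For Condition~(1), I would note that because $\M^\flat\in\CCD$ is a total algebra, separating the structure $\M^\flat$ is equivalent to separating the points of $M$ (as recorded in Section~\ref{sec:scene}); thus the hypothesis that $\Omega\circ\Clo_1(\MT)$ separates the points of $M$ is precisely Condition~(1). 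Finally, since $R^\nu = \{\leq\}$ consists of the single relation $\leq$, Condition~(3)(i) of the Corollary---entailment of every relation in $\Omegamax{\M}{r}$ for each $r\in R^\nu$---collapses to the requirement that $\MT$ entail every relation in $\Omegamax{\M}{\leq}$, which is exactly Condition~(2) of the present theorem.

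It remains to supply Condition~$(3)(ii)'$ of Corollary~\ref{cor:sqsubseteq}(D), namely that $R$ contains a reflexive, antisymmetric binary relation $\sqsubseteq$. Here I would invoke the piggyback order: since $\M$ is $\CCD$-based and $\Omega\subseteq\CCD(\M^\flat,\Dalg)$, the relation $\sqsubseteq \,\coloneqq\, \bigcap_{\omega\in\Omega}(\omega,\omega)^{-1}(\leq)$ is reflexive (each $(\omega,\omega)^{-1}(\leq)$ contains $\Delta_M$), and it is antisymmetric precisely because $\Omega\circ\Clo_1(\MT)$---hence a fortiori $\Omega$ together with the identity---separates the points of $M$ in an order-theoretically faithful way. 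One checks that $\sqsubseteq$ is entailed by $\MT$ as a consequence of Condition~(2), so that adjoining it to the type of $\MT$ changes neither $\CX$ nor the duality; this allows $\sqsubseteq$ to play the role demanded by the Corollary. The main obstacle, and the only step requiring genuine care, is verifying the antisymmetry of $\sqsubseteq$ and confirming that its entailment follows from the stated hypotheses rather than having to be assumed separately; once this is secured, Corollary~\ref{cor:sqsubseteq}(D) applies verbatim and yields that $\MT$ dualises $\M$.
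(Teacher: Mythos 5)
Your overall route is the paper's own: Theorem~\ref{thm:DLpig} is obtained there as an immediate specialisation of Corollary~\ref{cor:sqsubseteq}(D) with $\N = \Dalg$ and $\NT = \DT$, and your verification of the Basic Assumptions, together with the matching of Conditions (0), (1) (via the reduction of ``separates the structure $\M^\flat$'' to ``separates the points of $M$'', valid because $\M^\flat$ is a total algebra), (2)(i) ($\DT$ is purely relational) and (3)(i) (with $R^\nu = \{\leq\}$, so the entailment condition collapses to entailment of the relations in $\Omegamax{\M}{\leq}$), is exactly right.

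The final step, however, contains a genuine error. The relation $\sqsubseteq$ demanded by Condition (3)(ii)$'$ of Corollary~\ref{cor:sqsubseteq} lives on $N$, not on $M$: it must be one of the relations in the relational type $R^\nu$ of $\NT$, because the corollary rests on the $\sqsubseteq$-Lemma~\ref{lem:sublemma}(D), whose hypothesis---that $\MT$ entails each relation in $\Omegamax{\M}{\sqsubseteq}$---is supplied by Condition (3)(i) precisely when $\sqsubseteq \in R^\nu$, and whose conclusion is Condition (3)(ii) of Theorem~\ref{thm:pig-general}. In the present specialisation, (3)(ii)$'$ is therefore satisfied trivially: $R^\nu = \{\leq\}$ and the order on $\{0,1\}$ is reflexive and antisymmetric; nothing needs to be constructed. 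Your substitute relation on $M$, namely $\sqsubseteq \coloneqq \bigcap_{\omega\in\Omega} (\omega,\omega)^{-1}(\leq)$, is moreover not antisymmetric in general: $a \sqsubseteq b$ and $b \sqsubseteq a$ yield only $\omega(a) = \omega(b)$ for all $\omega \in \Omega$, and the hypothesis that $\Omega \circ \Clo_1(\MT)$ separates the points of $M$ does not imply that $\Omega$ alone does---composing with $\Clo_1(\MT)$ is there exactly because $\Omega$ may be too small. Concretely, for the Ockham generator $\M_1$ of Section~\ref{sec:DL} with $\Omega = \{\pi_0\}$, your relation is $\{\, (a,b) \mid a(0) \leq b(0)\,\}$, whose symmetrisation is $\ker(\pi_0) \ne \Delta_M$. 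The accompanying claims that this relation is entailed by $\MT$ ``as a consequence of Condition (2)'' (which concerns only the \emph{maximal} substructures contained in $(\omega_1,\omega_2)^{-1}(\leq)$, not arbitrary ones) and that it may harmlessly be adjoined to the type of $\MT$ are likewise unsupported; fortunately, all of this machinery evaporates once $\sqsubseteq$ is read, as intended, as a relation of $\NT$.
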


The theorem tells us that every finite $\CCD$-based total structure $\M$ is dualisable: choose $\Omega = \CCD(\M^\flat, \Dalg )$, then $\MT \coloneqq  \langle M; \Omegamax {\M} \leq, \Tp\rangle$ dualises $\M$.
In fact, the NU Duality Theorem for total structures \cite[Theorem~4.10]{D06}
already tells us that $\M$ is dualised by $\MT' \coloneqq  \langle M; \mathbb S(\M^2), \Tp\rangle$, where $\mathbb S(\M^2)$ is the set of all non-empty substructures of $\M^2$. The advantage of the $\CCD$-based Piggyback Duality Theorem over the NU Duality Theorem is that in general the set $\Omegamax {\M} \leq$ is much smaller than the set $\mathbb S(\M^2)$.

 A slightly perplexing consequence of the Piggyback Duality Theorem in the
$\CCD$-based  case (and of the NU Duality Theorem) is that if $\M_1 = \langle M; G, R_1\rangle$ and $\M_2= \langle M; G, R_2\rangle$ are finite $\CCD$-based total structures with the same set $G$ of fundamental operations, then there is a single alter ego, namely the alter ego $\MT \coloneqq  \langle M; \Omegamax {\M} \leq, \Tp\rangle$, that dualises both $\M_1$ and $\M_2$.

We turn now to the co-duality theorem.
Here we piggyback on the Bana\-schewski duality between ordered sets and Boolean topological bounded distributive lattices~\cite{Ban76} (see also~\cite{DHP12}).

Let $\twoBf = \langle \{0, 1\}; \leq\rangle$ be the two-element chain and  $\twoT = \langle \{0, 1\}; \vee, \wedge, 0, 1, \Tp\rangle$  be the two-element bounded distributive lattice endowed with the discrete topology, so that $\CQ\coloneqq \ISP(\twoBf)$ and $\CCD_\Tp \coloneqq  \IScP(\twoT)$ are, respectively, the category of non-empty ordered sets---an extremely easy exercise---and the category of non-trivial Boolean-topological bounded distributive lattices---a non-trivial fact due to Numakura~\cite{Num57}, see also~\cite[Example 8.2]{CDFJ}. Banaschewski's duality theorem tells us that $\twoT$ fully dualises $\twoBf$. Moreover, $\twoBf$ is injective in $\CQ$ and $\twoT$ is injective in $\CCD_\Tp$.

The following result is an immediate consequence of the Piggyback Co-duality Theorem~\ref{thm:copig-general}.

\begin{theorem}[$\CCD$-based Piggyback Co-duality Theorem] \label{thm:DLcopig-general}
Let $\M = \langle M; G, R\rangle$ be a total structure with named constants,
let $\Tp$ be a Boolean topology on $M$ and assume that $\M_\Tp \coloneqq  \langle M; G, R, \Tp\rangle$ has a structural reduct $\M_\Tp^\flat$ in $\CCD_\Tp$ and that the operations in $G$ are continuous. Let $\M'$ be a total structure that is compatible with $\M_\Tp$.  Then $\M'$ dualises $\M_\Tp$ provided that there is a subset $\Omega$ of $ \CCD_\Tp(\M_\Tp^\flat, \twoT)$ such that

\begin{enumerate}[\ \normalfont (1)]

\item
$\Omega\circ \Clo_1(\M')$ separates the points of~$M$, and

\item
$\M'$ entails every relation in $\Omegamax \M \leq$.
\end{enumerate}

\end{theorem}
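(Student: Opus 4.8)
The plan is to obtain the result as the specialisation of the Piggyback Co-duality Theorem~\ref{thm:copig-general} in which the base data are taken to be $\N \coloneqq \twoBf$ and $\NT \coloneqq \twoT$, so that $\CB = \ISP(\twoBf) = \CQ$ and $\CY = \IScP(\twoT) = \CCD_\Tp$, while the total structure $\M'$ plays the role of $\M$ and the structure-with-topology $\M_\Tp$ plays the role of $\MT$. Under this dictionary the desired conclusion, that $\M'$ dualises $\M_\Tp$, is exactly the assertion that $\MT$ co-dualises $\M$. First I would check that the Basic Assumptions are in force. The structures $\twoBf$ and $\M'$ are total and non-trivial (the latter because $\M_\Tp^\flat$ lies in $\CCD_\Tp$, forcing $\lvert M\rvert \geq 2$); the structure $\twoT$ is an alter ego of $\twoBf$, and $\M_\Tp$ is an alter ego of $\M'$ precisely because $\M'$ is compatible with $\M_\Tp$, compatibility being symmetric. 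Banaschewski's theorem supplies the full duality of $\twoT$ on $\CQ$, and the injectivity of $\twoBf$ in $\CQ$ and of $\twoT$ in $\CCD_\Tp$ was recorded just before the statement; finally, since having named constants is a property of term functions and is unaffected by the topology, $\M_\Tp$ inherits named constants from $\M$, supplying the extra hypothesis demanded by the co-duality theorem.

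Next I would translate the numbered conditions. Since $\twoBf = \langle \{0,1\}; \leq\rangle$ is purely relational, with $G^\nu = \varnothing$ and $R^\nu = \{\leq\}$, alternative~(2)(i) of Theorem~\ref{thm:copig-general} holds automatically, so no structural reduct of $\M'$ in $\CQ$ is needed. Condition~(1) asks that $\Omega\circ \Clo_1(\M')$ separate the structure $\M_\Tp^\flat$; as the non-topological reduct of $\M_\Tp^\flat$ is a bounded distributive lattice, hence a total algebra, separating its structure coincides with separating the points of $M$, which is hypothesis~(1). Condition~(3)(i)(a), continuity of the operations of $\M_\Tp$, is the hypothesis that the operations in $G$ are continuous, and Condition~(3)(i)(b), that $\M'$ entail every relation in $\Omegamax{\M}{r}$ for $r \in R^\nu = \{\leq\}$ (here $\Omegamax{\stwiddle M}{r}$ reduces to $\Omegamax{\M}{r}$, since removing the topology from $\M_\Tp$ returns $\M$), is precisely hypothesis~(2).

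The one remaining condition, and the only point that will require real work, is Condition~(3)(ii): that $\M'$ entail every relation in $\Omegamax{\M}{\Delta_N}$. I expect this clause to be the main obstacle, but it is exactly the situation covered by Corollary~\ref{cor:sqsubseteq}(coD), since $\leq$ is a reflexive, antisymmetric relation lying in $R^\nu$. The underlying mechanism is that antisymmetry of $\leq$ gives $\Delta_N = {\leq}\cap{\geq}$, whence $(\omega_1,\omega_2)^{-1}(\Delta_N) = (\omega_1,\omega_2)^{-1}({\leq}) \cap (\omega_1,\omega_2)^{-1}({\geq})$. Given a substructure $s$ of $\M^2$ maximal in $(\omega_1,\omega_2)^{-1}(\Delta_N)$, I would extend it, by Zorn's Lemma, to substructures $s_1$ and $s_2$ maximal in the two preimages; maximality of $s$ then forces $s = s_1 \cap s_2$. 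Here $s_1 \in \Omegamax{\M}{\leq}$ outright, while the converse $s_2^\smile$ is maximal in $(\omega_2,\omega_1)^{-1}({\leq})$ and so also lies in $\Omegamax{\M}{\leq}$; hypothesis~(2) entails both, and since entailment is closed under forming converses and intersections, $\M'$ entails $s$. With every condition of Theorem~\ref{thm:copig-general} thereby verified, it follows that $\M_\Tp$ co-dualises $\M'$, that is, $\M'$ dualises $\M_\Tp$, as required.
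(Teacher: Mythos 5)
Your proposal is correct and follows exactly the route the paper intends: the theorem is obtained by specialising the Piggyback Co-duality Theorem~\ref{thm:copig-general} to $\N=\twoBf$, $\NT=\twoT$ (with $\M'$ as $\M$ and $\M_\Tp$ as $\MT$), using Banaschewski's duality and the stated injectivity facts for the Basic Assumptions, and discharging Condition~(3)(ii) via Corollary~\ref{cor:sqsubseteq}(coD), whose $\sqsubseteq$-Lemma mechanism you reproduce faithfully. Your dictionary checks (purely relational $\twoBf$ giving~(2)(i), point-separation sufficing because $\M_\Tp^\flat$ has total-algebra type, named constants transferring to $\M_\Tp$) all match the paper's reading, so there is nothing to add.
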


\begin{corollary}
Every Boolean-topological $\CCD$-based total structure $\M_\Tp$ is dual\-isable.
\end{corollary}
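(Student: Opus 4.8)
The plan is to derive the corollary directly from the $\CCD$-based Piggyback Co-duality Theorem~\ref{thm:DLcopig-general} by exhibiting, for an arbitrary Boolean-topological $\CCD$-based total structure $\M_\Tp = \langle M; G, R, \Tp\rangle$, a companion total structure $\M'$ together with a carrier set $\Omega$ that verifies the hypotheses of that theorem. By analogy with the duality case---where the theorem yields dualisability of every finite $\CCD$-based total structure via the choice $\Omega = \CCD(\M^\flat, \Dalg)$ and $\MT \coloneqq \langle M; \Omegamax {\M} \leq, \Tp\rangle$---the natural guess is to take $\Omega$ to be the \emph{whole} of $\CCD_\Tp(\M_\Tp^\flat, \twoT)$ and to set $\M' \coloneqq \langle M; \Omegamax \M \leq\rangle$, equipping $M$ with the structure consisting of all binary relations that arise as maximal substructures inside the preimages $(\omega_1,\omega_2)^{-1}(\leq)$ as $\omega_1,\omega_2$ range over~$\Omega$.

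First I would check the standing assumptions of Theorem~\ref{thm:DLcopig-general}: that $\M_\Tp$ has named constants, that $\Tp$ is Boolean, that $\M_\Tp$ has a structural reduct in $\CCD_\Tp$, and that the operations in $G$ are continuous. Named constants and continuity of $G$ are part of the ambient hypothesis that $\M_\Tp$ is a Boolean-topological $\CCD$-based total structure; the structural reduct $\M_\Tp^\flat$ in $\CCD_\Tp$ is exactly what ``$\CCD$-based'' supplies, once one notes that the reduct, carrying the Boolean topology, lands in $\IScP(\twoT)$ rather than merely in $\ISP(\twoBf)$. Next I would verify that the $\M'$ I have defined is a total structure compatible with $\M_\Tp$: each relation in $\Omegamax \M \leq$ is by construction a substructure of $\M^2$, and Lemma~\ref{lem:MaxImpliesClosed} guarantees these relations are topologically closed, which is precisely what compatibility with the Boolean topology demands.

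The two numbered conditions then need to be established with these choices. For Condition~(2), entailment of every relation in $\Omegamax \M \leq$ by $\M'$ is immediate, since by definition $\M'$ \emph{carries} each such relation as part of its own type, and a structure trivially entails any relation in its type. For Condition~(1), I must show $\Omega \circ \Clo_1(\M')$ separates the points of~$M$; here taking $\Omega$ to be all of $\CCD_\Tp(\M_\Tp^\flat, \twoT)$ is decisive, because the continuous lattice homomorphisms from the reduct into $\twoT$ already separate points---this is the point-separation property underlying Banaschewski/Priestley-type dualities for the Boolean-topological distributive lattice $\M_\Tp^\flat$, and it persists after composing with the identity in $\Clo_1(\M')$.

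The main obstacle I anticipate is the point-separation in Condition~(1), and more precisely the subtlety that $\Omega$ must be drawn from $\CCD_\Tp(\M_\Tp^\flat, \twoT)$---\emph{continuous} lattice homomorphisms into the \emph{topologised} $\twoT$---rather than from the purely algebraic $\CCD(\M^\flat, \Dalg)$. One must confirm that the continuous characters of the Boolean-topological bounded distributive lattice $\M_\Tp^\flat$ separate its points; this is where the Boolean (compact zero-dimensional) nature of $\Tp$ and the structural-reduct membership in $\CCD_\Tp = \IScP(\twoT)$ do the real work, via the embedding of $\M_\Tp^\flat$ into a power of $\twoT$ afforded by $\IScP$. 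Once point-separation is secured, everything else is a matter of assembling the already-proved ingredients, and the corollary follows by invoking Theorem~\ref{thm:DLcopig-general} to conclude that $\M'$ dualises $\M_\Tp$, which is exactly the assertion that $\M_\Tp$ is dualisable.
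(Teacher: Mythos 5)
Your proposal is correct and takes essentially the same route as the paper, whose proof is exactly the one-line application of Theorem~\ref{thm:DLcopig-general} with $\Omega = \CCD_\Tp(\M_\Tp^\flat, \twoT)$ and $\M'$ carrying the relations $\Omegamax{\M}{\leq}$. The details you supply---compatibility and closedness via Lemma~\ref{lem:MaxImpliesClosed}, trivial entailment of relations in the type of $\M'$, named constants from the nullary bounds $0,1$, and point-separation from the embedding of $\M_\Tp^\flat$ into a power of $\twoT$ afforded by $\IScP(\twoT)$---are precisely the verifications the paper leaves implicit.
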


\begin{proof} Choose $\Omega = \CCD_\Tp(\M_\Tp^\flat, \twoT)$. Then $\M' \coloneqq  \langle M; \Omegamax \M \leq, \Tp\rangle$ dualises $\M_\Tp$ by Theorem~\ref{thm:DLcopig-general}.
\end{proof}

Our two final theorems in this section are immediate corollaries of the corresponding general results.
%%%%%%%%%

\begin{theorem}[$\CCD$-based Piggyback Strong Duality Theorem] \label{thm:DLfullpig1}
Let $\M$ be a
$\CCD$-based total structure with structural reduct $\M^\flat$ in~$\CCD$, let $\MT$ be an alter ego of $\M$ and define $\CA\coloneqq \ISP(\M)$ and $\CX\coloneqq \IScP(\MT)$.
Assume
there is an order relation $\leq$
that is conjunct-atomic definable from $\MT$ such that $\MT^\flat \coloneqq \langle M; \leq, \Tp\rangle$ is a Priestley space, and there exists $\omega\in \CCD(\M^\flat, \Dalg )\cap \CP(\MT^\flat, \DT  )$
such that
\begin{enumerate}[\ \normalfont(1)]

\item $\MT$ entails each binary relation $r$ on $M$
which forms a substructure of $\M^2$ that is maximal in $(\omega, \omega)^{-1}(\leq)$,

\item
$\omega\circ \Clo_1(\MT)$ separates the points of $M$, and

\item
$\omega\circ \Clo_1(\M)$ separates the structure~$\MT^\flat$, that is, if $a\nleqslant b$ in $\MT^\flat$, then there exists $t\in \Clo_1(\M)$ such that $\omega(t(a))= 1$ and $\omega(t(b))= 0$.
\end{enumerate}
Then
the
conclusions~\upshape{(a)--(c)} of Theorem~\upshape{\ref{thm:pigstrong1}} hold.
\end{theorem}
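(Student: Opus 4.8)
The plan is to derive Theorem~\ref{thm:DLfullpig1} as a direct specialisation of the Piggyback Strong Duality Theorem~\ref{thm:pigstrong1}, verifying that the hypotheses listed here match case~(I) of that theorem with $\N = \Dalg$ and $\NT = \DT = \langle \{0,1\}; \leq, \Tp\rangle$. First I would note that the Basic Assumptions are satisfied for this choice: $\N = \Dalg$ and $\N = \twoBf$ (the bare two-element chain viewed as the total algebra generating $\CCD$) are non-trivial total structures, $\NT = \DT$ is a total structure (purely relational, with the single relation $\leq$) that is an alter ego of $\Dalg$, and Priestley duality supplies precisely the facts required in Basic Assumptions~(B)(i) and~(ii): $\DT$ yields a full duality on $\CCD$, $\DT$ is injective in $\CP = \IScP(\DT)$, and $\Dalg$ is injective in $\CCD$. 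Since Lemma~\ref{lem:liftconstants} applies (both $\M$ and $\MT$ have the stated structural reducts and $\DT$ co-dualises $\Dalg$), we obtain that $\MT$ has named constants for free, so no separate named-constants hypothesis need be checked.

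Next I would match the hypotheses of the present theorem to the conditions of Theorem~\ref{thm:pigstrong1}(I). The existence of $\omega \in \CCD(\M^\flat, \Dalg) \cap \CP(\MT^\flat, \DT)$ gives the required carrier $\omega \in \CB(\M^\flat, \N) \cap \CY(\MT^\flat, \NT)$, where the hypothesis that $\MT^\flat = \langle M; \leq, \Tp\rangle$ is a Priestley space guarantees $\MT^\flat \in \CY = \CP$ and hence that $\MT$ genuinely has a structural reduct in $\CY$. Here $\NT = \DT$ is purely relational, so $R^\nu = \{\leq\}$ and $G^\nu = \varnothing$; thus $\N = \Dalg$ is vacuously a total algebra and condition~(I)(1) holds. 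For~(I)(2), condition~(2) of the present theorem is exactly (I)(2)(i) ($\omega \circ \Clo_1(\MT)$ separates the points of $M$), condition~(3) is exactly (I)(2)(ii) ($\omega \circ \Clo_1(\M)$ separates the structure $\MT^\flat$), and it remains to supply (I)(2)(iii), namely that $\MT$ entails every relation in $\omegamax{\M}{r}$ for each $r \in R^\nu$. Since the only $r$ is $\leq$, this asks that $\MT$ entail every relation maximal in $(\omega,\omega)^{-1}(\leq)$ that forms a substructure of $\M^2$, which is precisely condition~(1) of the present theorem.

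With all hypotheses of Theorem~\ref{thm:pigstrong1}(I) verified, conclusions~(a)--(c) transfer verbatim, giving the assertion. The only genuine points requiring care are bookkeeping: confirming that $\omegamax{\M}{\leq}$ in the notation of Theorem~\ref{thm:pigstrong1} coincides with the set of substructures of $\M^2$ maximal in $(\omega,\omega)^{-1}(\leq)$ as written in condition~(1) here (immediate from the definition of $\omegamax{\M}{-}$ with a single carrier $\omega$ and the single binary relation $\leq$), and noting that the explicit reformulation of condition~(3) in terms of points $a \nleqslant b$ is merely the unpacking of ``$\omega \circ \Clo_1(\M)$ separates the structure $\MT^\flat$'' via the separation criterion~(2)(ii) of Section~\ref{sec:scene} applied to the single relation $\leq$ of $\MT^\flat$. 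I do not anticipate a substantive obstacle: the content is entirely in Theorem~\ref{thm:pigstrong1}, and the work here is the routine identification of the abstract data $(\N, \NT, \omega, R^\nu)$ with the Priestley-duality data $(\Dalg, \DT, \omega, \{\leq\})$, together with the observation that pure relationality of $\DT$ makes case~(I) the applicable one.
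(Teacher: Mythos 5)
Your proposal is correct and takes essentially the same route as the paper, which presents Theorem~\ref{thm:DLfullpig1} precisely as an immediate corollary of the Piggyback Strong Duality Theorem~\ref{thm:pigstrong1}, via the same instantiation of case~(I) with $(\N, \NT, R^\nu) = (\Dalg, \DT, \{\leq\})$ and the same matching of conditions (1)--(3) to (I)(2)(iii), (I)(2)(i) and (I)(2)(ii), with named constants handled by Lemma~\ref{lem:liftconstants} exactly as you note. The only blemish is the slip ``$\N = \Dalg$ and $\N = \twoBf$'' in your opening (you mean $\NT = \DT$; the ordered set $\twoBf$ belongs to the co-duality setting of Section~\ref{sec:DL}, not to this instantiation), but since the rest of your argument consistently uses $\N = \Dalg$ and $\NT = \DT$, this does not affect the correctness of the proof.
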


\begin{theorem}[$\CCD$-based Two-for-one Piggyback Strong Duality Theorem] \label{thm:DL241pig}
Let $\M_1$ and $\M_2$ be compatible total
structures and let $\Tp$ be a Boolean topology on $M$ such that both $\M_1^\Tp$ and $\M_2^\Tp$ are topological structures, and assume that

\begin{enumerate}[\normalfont (i)]

\item there are binary term functions $\vee$ and $\wedge$ and nullary term functions $0$ and $1$ on $\M_1$ such that $(\M_1^\flat)^\Tp \coloneqq  \langle M; \vee, \wedge, 0, 1, \Tp\rangle$ is a Boolean topological bounded distributive lattice, and

\item there is an order relation $\leq$ that is conjunct-atomic definable from $\M_2$ such that $(\M_2^\flat)^\Tp \coloneqq \langle M; \leq, \Tp\rangle$ is a Priestley space.

\end{enumerate}
Assume that there exists a map $\omega\in \CCD(\M_1^\flat, \Dalg )\cap \CP((\M_2^\flat)^\Tp, \DT)$ such that
\begin{enumerate}[\ \normalfont(1)]

\item
$\omega\circ \Clo_1(\M_2)$ separates the points of $M$,

\item
$\omega\circ \Clo_1(\M_1)$ separates the structure~$\M_2^\flat$, that is, if $a\nleqslant b$ in $\M_2^\flat$, then there exists $t\in \Clo_1(\M_1)$ such that $\omega(t(a))= 1$ and $\omega(t(b))= 0$, and

\item $\M_2$ and $\M_2^\Tp$ entail every binary relation $r$ on $M$
which forms a substructure of $\M^2$ that is maximal in $(\omega, \omega)^{-1}(\leq)$.

\end{enumerate}
Then
the
conclusions~\upshape{(a)--(c)} of Theorem~\upshape{\ref{thm:two4one}} hold.
\end{theorem}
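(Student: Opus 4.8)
The plan is to obtain Theorem~\ref{thm:DL241pig} as a direct specialisation of the general Two-for-one Piggyback Strong Duality Theorem~\ref{thm:two4one}, by feeding in the Priestley and Banaschewski dualities as the two base dualities. Concretely, I would set $\N_1 \coloneqq \Dalg$ (so that $\CB_1 = \ISP(\Dalg) = \CCD$) and $\N_2 \coloneqq \twoBf$ (so that $\CB_2 = \ISP(\twoBf) = \CQ$), with topologised alter egos $\N_1^\Tp \coloneqq \twoT$ (whence $\CY_1 = \IScP(\twoT) = \CCD_\Tp$) and $\N_2^\Tp \coloneqq \DT$ (whence $\CY_2 = \IScP(\DT) = \CP$). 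The decisive bookkeeping point, forced by the shape of the hypotheses, is that the \emph{algebra} side must be $\N_1$ and the \emph{purely relational} side must be $\N_2$: since $\twoBf$ is not a total algebra, it is Condition~(I) of Theorem~\ref{thm:two4one}, and not Condition~(II), that we are aiming to verify.

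First I would check the base-category hypotheses of Theorem~\ref{thm:two4one}. That $\N_2^\Tp = \DT$ is an alter ego of $\N_1 = \Dalg$ and fully dualises it, with $\DT$ injective in $\CP$ and $\Dalg$ injective in $\CCD$, is exactly Priestley duality. By the topology-swapping remark immediately preceding Theorem~\ref{thm:two4one}, $\N_1^\Tp = \twoT$ is then automatically an alter ego of $\N_2 = \twoBf$; that $\twoT$ fully dualises $\twoBf$, with $\twoT$ injective in $\CCD_\Tp$ and $\twoBf$ injective in $\CQ$, is the Banaschewski duality recorded earlier in the section. Thus all four base-duality and injectivity requirements are met.

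Next I would match the hypotheses on $\M_1$, $\M_2$ and $\omega$. Compatibility of $\M_1$ and $\M_2$ together with $\M_1^\Tp$ being a topological structure makes $\M_2^\Tp$ an alter ego of $\M_1$, as required. Hypothesis~(i) supplies the term operations making $(\M_1^\flat)^\Tp = \langle M; \vee, \wedge, 0, 1, \Tp\rangle$ a member of $\CY_1 = \CCD_\Tp$, and hypothesis~(ii) supplies the conjunct-atomic order making $(\M_2^\flat)^\Tp = \langle M; \leq, \Tp\rangle$ a member of $\CY_2 = \CP$; since these arise from term operations and a conjunct-atomic relation, $\M_1^\flat$ and $\M_2^\flat$ are genuine structural reducts of $\M_1$ and $\M_2$. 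The map $\omega \in \CCD(\M_1^\flat, \Dalg) \cap \CP((\M_2^\flat)^\Tp, \DT)$ lies in $\CB_1(\M_1^\flat, \N_1)$ on the nose; dropping the topology, it is order-preserving and hence a member of $\CB_2(\M_2^\flat, \N_2) = \CQ(\M_2^\flat, \twoBf)$, and it is continuous, as Theorem~\ref{thm:two4one} requires. Finally, Condition~(I)(1) holds by construction — $\N_1 = \Dalg$ is a total algebra and $\N_2 = \twoBf = \langle \{0, 1\}; \leq\rangle$ has the form $\langle N; G^\nu, R^\nu\rangle$ with $G^\nu = \varnothing$ and $R^\nu = \{\leq\}$ — and with these values Conditions~(I)(2)(i)--(iii) read off verbatim as Conditions~(1)--(3) here; in particular $\omegamax{\M_1}{r}$ collapses to $\omegamax{\M_1}{\leq}$, the set of substructures of $\M_1^2$ maximal in $(\omega,\omega)^{-1}(\leq)$. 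Applying Theorem~\ref{thm:two4one} then yields conclusions~(a)--(c).

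The proof is thus entirely a matter of making the right identifications and checking that the general conditions specialise correctly; there is no substantive new argument. The one place where a slip is easy — and which I would treat as the main (if modest) obstacle — is the index assignment together with the passage from $\omega \in \CP((\M_2^\flat)^\Tp, \DT)$ to membership in the non-topological hom-set $\CB_2(\M_2^\flat, \N_2)$: one must be sure that the purely relational side is $\N_2$ (so that Condition~(I), with its three sub-conditions, is the one in play) and that order-preservation, rather than any stronger structural requirement, is exactly what is needed for $\omega$ to be a $\CQ$-morphism.
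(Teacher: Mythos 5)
Your proposal is correct and is precisely the argument the paper intends: the paper dispatches Theorem~\ref{thm:DL241pig} as an ``immediate corollary'' of Theorem~\ref{thm:two4one}, and your instantiation $\N_1 = \Dalg$, $\N_2 = \twoBf$, $\N_1^\Tp = \twoT$, $\N_2^\Tp = \DT$ (so $\CB_1 = \CCD$, $\CB_2 = \CQ$, $\CY_1 = \CCD_\Tp$, $\CY_2 = \CP$), with the base hypotheses supplied by Priestley and Banaschewski duality as recorded in Section~4, is exactly the right specialisation, including the key point that Condition~(I) rather than~(II) applies because $\twoBf$ is purely relational. Your checks that $\M_2^\Tp$ is an alter ego of $\M_1$ (compatibility plus $\M_1^\Tp$ being a topological structure) and that $\omega \in \CP((\M_2^\flat)^\Tp, \DT)$ yields a continuous member of $\CQ(\M_2^\flat, \twoBf)$ are the same routine verifications the paper leaves implicit.
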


Leaving aside
 strongness of the co-duality involved, which has
 not been recognised before,
Theorems~\ref{thm:DLfullpig1} and~\ref{thm:DL241pig}
 explain the observed behaviour of certain algebras, for example Stone algebras, double Stone algebras and De Morgan algebras, whose natural and Priestley duals `coincide'.
A discussion of this phenomenon of coincidence,  in the context of arbitrary finitely generated quasivarieties  of $\CCD$-based algebras,
is given by
Cabrer and Priestley
in~\cite[Section~2]{CPcop}; see in particular
\cite[Corollary~2.4]{CPcop}.
They  showed  more generally how, within the ambit
of  $\CCD$-based
piggybacking,  to pass from the natural dual space
$\D \A$
of an algebra~$\A$ to $\mathrm H(\A^\flat)$.   The motivation in \cite{CPcop},
in
the context of
a study of coproducts,  was to harness
simultaneously the categorical virtues  of a natural duality and the pictorial nature of Priestley duality.   This isolated illustration---and we could have provided
others---indicates that the usefulness of piggybacking
extends beyond the derivation of natural dualities, whether strong or not.

\subsection*{Applications to Ockham algebras}
An algebra $\A= \langle A; \vee, \wedge, \neg, 0, 1\rangle$ is an Ockham algebra if $\A^\flat \coloneqq  \langle A; \vee, \wedge, 0, 1\rangle$
belongs to~$\CCD$,
and $\neg$  is a dual endomorphism of $\A^\flat$, that is, $\neg 0 = 1$, $\neg 1 = 0$, and $\neg$ satisfies the De Morgan laws:
\[
\neg (a\vee b) = \neg a \wedge \neg b \text{ \ and \ } \neg (a\wedge b) = \neg a \vee \neg b.
\]
 The variety~$\CO$ of Ockham algebras has provided a valuable  example
for a number of developments in duality theory.  Below we shall show how Theorem~\ref{thm:DL241pig} can be applied to~$\CO$.  Along the way, we
recapture Davey and Werner's piggyback duality for~$\CO$, using essentially
their argument;
this duality was originally obtained, without piggybacking, by Goldberg \cite{G83}.

We first recall some well-known facts.
Now let $\gamma \colon \mathbb N_0\to \mathbb N_0$ be   the \emph{successor function}: $\gamma(n)\coloneqq  n+1$ and let $c$ denote Boolean complementation
on $\{ 0,1\}$.  Define $\M_1 \coloneqq  \langle \{0, 1\}^{\mathbb N_0} \mid  \vee, \wedge, \neg, \underline 0, \underline 1\rangle$, where
$\vee$ and $\wedge$ are defined pointwise, $\underline 0$ and $\underline 1$ are the constant maps onto $0$ and~$1$, respectively, and, for all $a\in \{0, 1\}^{\mathbb N_0}$ we have $\neg(a) \coloneqq  c\circ a\circ \gamma$.
Thus, $\neg$ is given by \emph{shift left and then negate}; for example,
$\neg(0110010 \dots) =  (001101 \dots)$.
 Here, and subsequently, we write elements of $\{0, 1\}^{\mathbb N_0}$
as binary strings.
Then $\M_1$ is an Ockham algebra.  Moreover, $\M_1$ is subdirectly irreducible and $\CO = \ISP(\M_1)$ (\cite{U79}; see also \cite{G83}).

The topology on $\{0,1\}^{\mathbb N_0}$ will be the product topology $\Tp$ coming from the discrete topology on $\{0, 1\}$. It is an easy exercise to see that the operations $\vee$, $\wedge$ and $\neg$ on $\M_1$ are continuous with respect to~$\Tp$. Hence $\M_1^\Tp$ is a topological algebra.
We now set up a structure $\M_2 = \langle  \{0,1\}^{\mathbb N_0}; u, \preccurlyeq\rangle$ that is compatible with~$\M_1$.
Let $u \colon \{0,1\}^{\mathbb N_0} \to \{0,1\}^{\mathbb N_0}  $ be the \emph{left shift operator}, given by $u(a) \coloneqq   a\circ \gamma$.  Thus,  for example,
$ u(0110010 \dots) =(110010 \dots)$.
Then $u \in \End(\M_1)$ and $u$ is clearly continuous. Define $\preccurlyeq$ to be the \emph{alternating order} on $\{0, 1\}^{\mathbb N_0}$, that is, for all $a, b\in \{0, 1\}^{\mathbb N_0}$,
\[
a \preccurlyeq b \iff a(0) \leq b(0) \And a(1) \geq b(1) \And a(2) \leq b(2) \And \cdots.
\]
Since $\preccurlyeq$ forms a subalgebra of $\M_1^2$, it follows that $\M_2$ is compatible with~$\M_1$.  It is an elementary exercise to show that $\preccurlyeq$ is closed in the product topology on $\{0,1\}^{\mathbb N_0}\times \{0,1\}^{\mathbb N_0}$, and so $\M_2^\Tp$ is a topological structure.
Since $(\M_1^\flat)^\Tp \coloneqq  \langle \{0,1\}^{\mathbb N_0}; \vee, \wedge, \underline 0, \underline 1, \Tp\rangle$ is a Boolean-topological
 bounded distributive lattice and $(\M_2^\flat)^\Tp \coloneqq  \langle  \{0,1\}^{\mathbb N_0}; \preccurlyeq, \Tp\rangle$ is a Priestley space, conditions
(i) and (ii) of Theorem~\ref{thm:DL241pig} are satisfied.

Define $\omega \coloneqq  \pi_0\colon \{0, 1\}^{\mathbb N_0}\to \{0, 1\}$.
Clearly, $\pi_0 \in \CCD(\M_1^\flat, \Dalg )\cap \CP((\M_2^\flat)^\Tp, \DT)$.
The set $\pi_0\circ \Clo_1(\M_2)$ separates the points of $M = \{0, 1\}^{\mathbb N_0}$: indeed, let $a,b \in \{0, 1\}^{\mathbb N_0}$ with $a \ne b$, then
\begin{align*}
a\ne b &\implies (\exists n\in \mathbb N_0) \, a(n) \ne b(n)\\
& \implies (\exists n\in \mathbb N_0) \, u^n(a) (0) =(a\circ \gamma^n)(0)  \ne (b\circ \gamma^n)(0) = u^n(b)(0)\\
&\implies (\exists n\in \mathbb N_0) \, (\pi_0\circ u^n)(a) \ne (\pi_0\circ u^n)(b).
\end{align*}
As $u$ is in $\Clo_1(\M_2)$, so is $u^n$. Hence $\pi_0\circ \Clo_1(\M_2)$ separates the points of~$M$, that is, Condition (1) of Theorem~\ref{thm:DL241pig} holds.

We now show that Condition (2) of Theorem~\ref{thm:DL241pig} holds.
We must prove that $\pi_0\circ \Clo_1(\M_1)$ separates the structure~$\M_2^\flat$, that is, if $a\not\preccurlyeq b$ in $\M_2^\flat$, then there exists $t\in \Clo_1(\M_1)$ such that $\pi_0(t(a))= 1$ and $\pi_0(t(b))= 0$. We have
\begin{align*}
a\not\preccurlyeq b \text{ in $\M_2^\flat$}
&\iff (\exists n\in \mathbb N_0) \begin{cases} a(n) =1 \And b(n) = 0, \ &\text{$n$ even}\\
a(n) = 0 \And b(n) = 1, \ &\text{$n$ odd}
\end{cases}\\
&\implies (\exists n\in \mathbb N_0)\ \neg^n(a)(0) = 1 \And \neg^n(b)(0) = 0\\
&\implies (\exists n\in \mathbb N_0)\ \pi_0(\neg^n(a)) = 1 \And \pi_0(\neg^n(b)) = 0
\end{align*}
as required, with $t(v) \coloneqq  \neg^n(v)$.

Finally, to establish Condition (3) of Theorem~\ref{thm:DL241pig}, we must find the binary relations $r$ on $M$ which form substructures of $\M_1^2$ that are maximal in $(\pi_0, \pi_0)^{-1}(\leq)$. We have
\[
(\pi_0, \pi_0)^{-1}(\leq) = \{\, (a, b)\in (\{0, 1\}^{\mathbb N_0})^2 \mid a(0) \leq b(0)\,\}.
\]
Let $r$ be a subalgebra of $\M_1^2$ with $r\subseteq (\pi_0, \pi_0)^{-1}(\leq)$. Then
\begin{align*}
(a, b)\in r
 \implies {}& (\forall n\in \mathbb N_0) \, (\neg^n(a), \neg^n(b))\in r\\
\implies {}& (\forall n\in \mathbb N_0) \, \neg^n(a)(0) \leq \neg^n(b)(0)\\
\implies {}& a(0) \leq b(0) \And a(1) \geq b(1) \And a(2) \leq b(2) \And \cdots\\
\iff {}& a \preccurlyeq b.
\end{align*}
Thus $r\subseteq {\preccurlyeq}$. Since $\preccurlyeq$ forms a subalgebra of $\M_1^2$ and ${\preccurlyeq} \subseteq (\pi_0, \pi_0)^{-1}(\leq)$, and $r$ is a maximal such relation, it follows that $r = {\preccurlyeq}$ is the unique such relation.
(For related results, see \cite[Section~3]{CPcop},  which applies  to  finitely generated $\CCD$-based quasi\-varieties and more particularly 
\cite[Lemma~3.5 and 3.6]{DP87}, concerning Ockham algebra quasivarieties.) Since $\preccurlyeq$ is part of the type of $\M_2$, it is completely trivial that $\M_2$ and $\M_2^\Tp$ entail~$\preccurlyeq$, whence Condition~(3) of Theorem~\ref{thm:DL241pig} holds.

We therefore have the  following theorem.

\begin{theorem}[Two-for-One Strong Duality Theorem for Ockham algebras] \label{ex:Ockham}
Let $\M_1$ and $\M_2^\Tp$ be as defined above.
Then conclusions~\upshape{(a)--(c)} of Theorem~\ref{thm:two4one} hold. In particular, $\M_2^\Tp\coloneqq \langle \{0, 1\}^{\mathbb N_0}; u, \preccurlyeq, \Tp\rangle$ strongly dualises $\M_1$ and $\M_1^\Tp\coloneqq \langle \{0, 1\}^{\mathbb N_0}; \vee, \wedge, \neg, \underline 0, \underline 1, \Tp\rangle$ strongly dualises $\M_2$.
\end{theorem}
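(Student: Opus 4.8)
The plan is to invoke the $\CCD$-based Two-for-one Piggyback Strong Duality Theorem~\ref{thm:DL241pig} directly, since every hypothesis it requires has already been assembled in the discussion preceding the statement. Specifically, $\M_1$ and $\M_2$ have been exhibited as compatible total structures on $M = \{0,1\}^{\mathbb N_0}$; the product topology $\Tp$ makes both $\M_1^\Tp$ and $\M_2^\Tp$ topological structures; the reduct $(\M_1^\flat)^\Tp$ is a Boolean-topological bounded distributive lattice while $(\M_2^\flat)^\Tp$ is a Priestley space, so hypotheses~(i) and~(ii) of Theorem~\ref{thm:DL241pig} hold; and $\omega \coloneqq \pi_0$ lies in $\CCD(\M_1^\flat, \Dalg) \cap \CP((\M_2^\flat)^\Tp, \DT)$. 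It therefore remains only to appeal to the verifications of Conditions~(1)--(3), after which the theorem is immediate.

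The substantive content lies in those three verifications, all carried out above. Conditions~(1) and~(2) are the separation requirements: that $\pi_0 \circ \Clo_1(\M_2)$ separates the points of $M$, which follows by iterating the left shift $u$, and that $\pi_0 \circ \Clo_1(\M_1)$ separates the structure $\M_2^\flat$, which follows by iterating $\neg$ (shift-and-negate) so as to read off the alternating order one coordinate at a time. Condition~(3) is the entailment requirement, and is the only point demanding real work: one identifies $\preccurlyeq$ as the unique binary relation forming a substructure of $\M_1^2$ that is maximal in $(\pi_0, \pi_0)^{-1}(\leq)$. Because $\preccurlyeq$ already belongs to the type of $\M_2$, both $\M_2$ and $\M_2^\Tp$ entail it trivially, so Condition~(3) is satisfied.

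With all hypotheses in place, Theorem~\ref{thm:DL241pig} delivers conclusions~(a)--(c) of Theorem~\ref{thm:two4one} at once. The closing `in particular' assertions are then merely a reading-off of conclusions~(a) and~(b): conclusion~(a) states that $\M_2^\Tp$ fully dualises $\M_1$ with $\M_2^\Tp$ injective in $\CX_2$, which is exactly what it means for $\M_2^\Tp$ to strongly dualise $\M_1$; symmetrically, conclusion~(b) yields that $\M_1^\Tp$ strongly dualises $\M_2$. I anticipate no genuine obstacle in the proof proper, as the one outstanding step is a direct citation of Theorem~\ref{thm:DL241pig}; the single place that required care was the determination, under Condition~(3), of the unique maximal subalgebra $\preccurlyeq$ of $\M_1^2$ lying inside $(\pi_0,\pi_0)^{-1}(\leq)$, and this has been settled in the preceding paragraphs.
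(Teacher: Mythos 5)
Your proposal is correct and follows essentially the same route as the paper: the paper's proof of Theorem~\ref{ex:Ockham} consists precisely of the preceding verifications of hypotheses~(i), (ii) and Conditions~(1)--(3) of Theorem~\ref{thm:DL241pig} (separation via iterates of $u$ and of $\neg$ composed with $\pi_0$, and the identification of $\preccurlyeq$ as the unique maximal subalgebra of $\M_1^2$ inside $(\pi_0,\pi_0)^{-1}(\leq)$, trivially entailed since it lies in the type of $\M_2$), followed by a direct citation of that theorem. Your reading-off of the \emph{in particular} clause from conclusions~(a) and~(b), using that full duality together with injectivity of the alter ego is the definition of strong duality for total structures, also matches the paper.
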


Not surprisingly, our proof that $\M_2^\Tp$ dualises the algebra $\M_1$ is essentially the same as the original Davey--Werner piggyback-based
proof.
We note that Goldberg \cite[Corollary~9]{G83} proved fullness via a direct calculation that $\X \cong \DE \X$ for each $\X \in \IScP(\M_2^\Tp)$.
Earlier, Goldberg~\cite[Theorem~4.4]{G81} had proved that $\M_1$ is injective in $\CO$, whence the Injectivity Lemma (see~\cite[Lemma~3.2.10]{NDftWA}) tells us that Goldberg's duality is strong.
Our piggyback-based proof that the duality is strong is new.
Conclusion~(c) of Theorem~\ref{thm:two4one} tells us, in particular, that the natural and Priestley duals of each Ockham algebra $\A\in \ISP(\M_1)$ and each Ockham space $\X\in \IScP(\M_2^\Tp)$ coincide; more formally, $\D\A^\flat \cong \mathrm H(\A^\flat)$ and $\E\X^\flat\cong \mathrm K(\X^\flat)$. These isomorphisms were first proved by Goldberg~\cite[Theorem~8]{G83}.
The fact that we can swap the topology and conclude that the topological algebra $\M_1^\Tp$ strongly dualises the structure $\M_2$ is new.

\begin{remark}
We have focused on the variety
$\CO$ rather than on its finitely generated subquasivarieties for several reasons.  First of all, $\CO$ provides a good example
of the applicability of our machinery in the non-finitely generated setting.
Secondly,  natural dualities for  subquasivarieties of $\CO$
generated by  finite subdirectly irreducible algebras
(in particular those which are also varieties) have been very extensively studied, both as a tool for investigating Ockham algebras and perhaps more importantly
as test case examples during the evolution of natural duality theory; see
\cite[Chapter~4]{NDftWA}
and \cite{G83, DW85,DP87,DHP12}.
The  most interesting in the present context are those Ockham varieties to which
Theorems~\ref{thm:DLfullpig1} and~\ref{thm:DL241pig} apply
and we thereby obtain new information.
For these varieties both the duality,  and the co-duality obtained by topology-swapping,
are strong,  and the forgetful functors on both sides give isomorphisms.
Varieties coming under this umbrella include De Morgan algebras,
Stone algebras and $\mathrm{MS}$-algebras
(and more generally any variety generated by a finite subdirectly irreducible Ockham algebra in one of the three infinite classes described in~\cite[Lemma~3.9]{DP87}).
Kleene algebras are not covered by
Theorems~\ref{thm:DLfullpig1} and~\ref{thm:DL241pig}.
See the references cited above for further details of the varieties concerned and of their piggyback natural dualities.
\end{remark}

%%%%%%%%%%%%%%%%%%%%%%%%%%%%%%%%%%%%%%%%%%%%%%%%%%%%%%%%
\section{Applications to semilattice-based algebras}\label{sec:SL}
%%%%%%%%%%%%%%%%%%%%%%%%%%%%%%%%%%%%%%%%%%%%%%%%%%%%%%%%

We concentrate in this section on the variety $\CS$ of meet-semilattices with~$1$,
but note (see \cite{DJPT07} and Example~\ref{ex:entropic}) that simple modifications produce corresponding results for meet-semilattices, meet-semilattices with~$0$, and bounded meet-semilattices.

When producing a piggyback duality theorem based on an underlying meet-semilattice structure, we are forced to have semilattice operations in the clones of $\M$ and $\MT$; denote these by~$\wedge$ and~$\sqcap$, respectively.
Since $\MT$ is an alter ego of~$\M$, the operation $\sqcap$ must be a homomorphism from $\M^2$ to~$\M$; in particular, we must have $(a \wedge c) \sqcap (b\wedge d) = (a \sqcap b) \wedge (c \sqcap d)$, for all $a,b,c,d \in M$. But it follows easily from this that $\sqcap = \wedge$; indeed,
\[
a \wedge b = (a \wedge b) \sqcap (b\wedge a) = (a \sqcap b) \wedge (b \sqcap a) = a \sqcap b.
\]
Thus we shall assume that in the clone of $\M$ there is a meet operation, $\wedge$, that is continuous with respect to the topology on $\MT$ and is a homomorphism from $\M^2$ to $\M$, and we shall assume that~$\wedge$ is part of the structure on~$\MT$.

Let $\S$ be the two-element meet-semilattice with $1$, let $\ST$ be $\S$ with the discrete topology added,
and take $\N$ to be $\S$ and $\NT$ to be $\ST$. Then the base categories
$\CS\coloneqq \ISP(\S)$ and $\CY \coloneqq  \IScP(\ST)$ are, respectively, the category of meet-semilattices with~$1$---an extremely easy exercise---and the category of Boolean-topological meet-semilattices with~$1$---see~\cite[2.4 (SEP)]{DW83} for a straightforward proof, and~\cite[Example 2.6 and Theorem 4.3]{CDFJ} to see the result from a more general perspective.
Moreover,  the parts of the Basic Assumptions concerning the base categories
are satisfied.

We shall apply the piggyback duality and co-duality theorems in their single-carrier versions.
Our first result generalises Davey, Jackson, Pitkethly and Talukder's Semilattice Piggyback Duality Theorem~\cite[Theorem~7.1]{DJPT07}  from semilattice-based algebras to semilattice-based total structures. Note that the semilattice operations in $\M^\flat$ and $\MT^\flat$ must agree.

\begin{theorem}[$\CS$-based
 Piggyback Duality Theorem] \label{thm:SLpig}
Let $\M$ be a total structure with a structural reduct $\M^\flat$ in~$\CS$, let $\MT$ be a structure with a structural reduct $\MT^\flat$ in~$\CY$ and assume that $\MT$ is an alter ego of $\M$.
Then $\MT$ dualises $\M$ provided there exists $\omega \in \CY(\MT^\flat, \ST)$ such that

\begin{enumerate}[\ \normalfont (1)]

\item $
\omega\circ \Clo_1(\MT)$ separates the points of~$M$, and

\item
$\MT$ entails each binary relation $r$ on $M$
 which forms a substructure of $\M^2$ that is maximal in $\ker(\omega)$.

\end{enumerate}
\end{theorem}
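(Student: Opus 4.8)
The plan is to derive the theorem directly from the Single-carrier Piggyback Duality Theorem~\ref{thm:pig-simple}, specialised to the base data $\CB = \CS$, $\N = \S$ and $\NT = \ST$. The base-category portion of the Basic Assumptions is available here: $\ST$ fully dualises $\S$ by Hofmann--Mislove--Stralka duality, and $\S$ and $\ST$ are injective in $\CS$ and $\CY$, respectively, as recorded above. Moreover $\NT = \ST = \langle \{0,1\}; \wedge, 1, \Tp\rangle$ has the required form $\langle N; G^\nu, R^\nu, \Tp\rangle$ with $R^\nu = \varnothing$. It therefore remains only to check Conditions (0)--(3) of Theorem~\ref{thm:pig-simple} for the single carrier~$\omega$.

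First I would dispose of the conditions that are immediate or vacuous. Condition~(0) holds because $\omega \in \CY(\MT^\flat, \ST)$ is by definition a $\CY$-morphism and hence continuous. For Condition~(2), the alter ego $\ST$ is not purely relational, so we invoke alternative~(2)(ii): the hypotheses already supply a structural reduct $\MT^\flat$ of $\MT$ in $\CY$ together with $\omega \in \CY(\MT^\flat, \ST) = \CY(\MT^\flat, \NT)$. Condition~(3)(i) is vacuously satisfied, since $R^\nu = \varnothing$ leaves no relations to entail. Finally, Condition~(3)(ii)(a) is precisely hypothesis~(2), that $\MT$ entails every binary relation forming a substructure of $\M^2$ that is maximal in $\ker(\omega)$.

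The one step needing care is Condition~(1), which demands that $\omega \circ \Clo_1(\MT)$ separate the \emph{structure} $\M^\flat$, whereas hypothesis~(1) only guarantees separation of the \emph{points} of $M$. Here I would first observe that $\omega \circ \Clo_1(\MT) \subseteq \CS(\M^\flat, \S)$: each $u \in \Clo_1(\MT)$ lies in $\End(\M)$ by the compatibility of $\M$ and $\MT$, and hence in $\End(\M^\flat)$, since $\M^\flat$ is a structural reduct of $\M$ (its operations $\wedge$ and $1$ being term functions of $\M$, which endomorphisms preserve); and $\omega$, after forgetting the topology, is a $\CS$-morphism from $\M^\flat$ to $\S$, the semilattice operations of $\M^\flat$ and $\MT^\flat$ having been arranged to agree. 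Because $\M^\flat$ is a total algebra in $\CS$, the observation from Section~\ref{sec:scene}---that for a total algebra separating the structure coincides with separating the points---lets us upgrade hypothesis~(1) to Condition~(1).

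With all of (0)--(3) verified, Theorem~\ref{thm:pig-simple} yields that $\MT$ dualises $\M$. The main---indeed essentially the only---obstacle is the point-versus-structure separation issue just addressed; every other condition is either a direct consequence of the ambient semilattice duality or a verbatim restatement of a hypothesis.
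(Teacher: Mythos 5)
Your proposal is correct and takes essentially the same route as the paper: the paper's proof is a one-line appeal to the Single-carrier Piggyback Duality Theorem~\ref{thm:pig-simple}, observing that since $\M^\flat$ is a total algebra the hypotheses give Conditions (0), (1), (2)(ii), (3)(i) (vacuously, as $R^\nu=\varnothing$) and (3)(ii)(a). Your additional checks---that $\omega\circ \Clo_1(\MT)\subseteq \CS(\M^\flat,\S)$ because the semilattice operations of $\M^\flat$ and $\MT^\flat$ agree, and the resulting upgrade from point-separation to structure-separation---are precisely what the paper leaves implicit in that appeal.
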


\begin{proof}
Since $\M^\flat$ is a total algebra (in fact a meet-semilattice with $1$), the assumptions guarantee that Conditions (0), (1), (2)(ii), (3)(i) and (3)(ii)(a)
of the  Single-carrier Piggyback Duality Theorem~\ref{thm:pig-simple} hold
(with (3)(i) holding vacuously).
Hence $\MT$ dualises~$\M$.
\end{proof}

The $\CS$-based Piggyback Co-duality Theorem, which is new, follows in a similar way to its non-co counterpart. Note that Lemma~\ref{lem:liftconstants} ensures that we do not need to mention named constants in this theorem.
In fact, since the base category $\CY$ has $1$ as a nullary operation in its type
and $\MT^\flat$ belongs to $\CY$, it follows that $\MT$ has a nullary operation and so has named constants.

\begin{theorem}[$\CS$-based Piggyback Co-duality Theorem] \label{thm:SLcopig}
Let $\M$ be a total structure with a structural reduct $\M^\flat$ in~$\CS$, let $\MT$ be a structure with continuous operations and with a structural reduct $\MT^\flat$ in~$\CY$. Then an alter ego $\MT$ of~$\M$ co-dualises $\M$ provided there exists $\omega \in \CY(\MT^\flat, \ST)$ such that
\begin{enumerate}[\ \normalfont (1)]

\item $
\omega\circ \Clo_1(\M)$ separates the points of~$M$, and

\item
$\M$ entails each binary relation $r$ on $M$
which forms a substructure of $\MT^2$ that is maximal in $\ker(\omega)$.

\end{enumerate}
\end{theorem}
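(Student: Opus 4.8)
The plan is to derive this theorem exactly as its duality counterpart Theorem~\ref{thm:SLpig} is derived from Theorem~\ref{thm:pig-simple}: by checking that the hypotheses force every condition of the Single-carrier Piggyback Co-duality Theorem~\ref{thm:copig-simple}. The key simplifying observations are that here $\N = \S$ is a \emph{total algebra} with empty relational part, so $R^\nu = \varnothing$, and that $\MT^\flat$, being a member of $\CY = \IScP(\ST)$, is on forgetting its topology a meet-semilattice with~$1$, hence a total algebra. These two facts will render several conditions of Theorem~\ref{thm:copig-simple} either vacuous or automatic.

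First I would dispose of the standing assumption that $\MT$ has named constants, which is not listed among the hypotheses. Since $\ST$ co-dualises $\S$ (the Hofmann--Mislove--Stralka duality being full) and $\M$, $\MT$ carry structural reducts $\M^\flat$ in $\CS = \CB$ and $\MT^\flat$ in $\CY$, the Named Constants Lemma~\ref{lem:liftconstants} applies and yields named constants for~$\MT$. Alternatively, one notes directly that $1$ is a nullary operation in the type of $\CY$, so that $\MT^\flat$, and hence $\MT$, possesses a nullary term function.

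Next I would verify Conditions (1)--(3) of Theorem~\ref{thm:copig-simple}. For Condition~(1), the underlying structure of $\MT^\flat$ is the total algebra $\langle M; \wedge, 1\rangle$, and for a total algebra separating the structure coincides with separating the points; hence hypothesis~(1) of the present theorem (that $\omega\circ \Clo_1(\M)$ separates the points of~$M$) delivers Condition~(1) of Theorem~\ref{thm:copig-simple}. For Condition~(2), since $\N = \S$ is not purely relational we must supply alternative~(ii): by hypothesis $\M$ has a structural reduct $\M^\flat$ in $\CB = \CS$, and the carrier $\omega \in \CY(\MT^\flat, \ST)$ is, on forgetting the topology, a meet-semilattice homomorphism $\M^\flat \to \S$, because the semilattice operations of $\M^\flat$ and $\MT^\flat$ agree (as remarked at the start of the section); thus $\omega \in \CB(\M^\flat, \N)$. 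For Condition~(3): part~(i)(a) is the hypothesis that the operations of $\MT$ are continuous; part~(i)(b), concerning the entailment of relations in $\omegamax {\stwiddle M} r$ for $r\in R^\nu$, holds vacuously since $R^\nu = \varnothing$; and part~(ii)(a) is precisely hypothesis~(2). With all conditions in place, Theorem~\ref{thm:copig-simple} yields that $\MT$ co-dualises~$\M$.

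I do not anticipate a genuine obstacle, as the argument is a routine specialisation. The only steps requiring a little care are bookkeeping ones: confirming the named-constants hypothesis via Lemma~\ref{lem:liftconstants}, and the translation of the carrier between the topological hom-set $\CY(\MT^\flat, \ST)$ and the non-topological hom-set $\CB(\M^\flat, \N)$ using the agreement of the two semilattice reducts. Everything else is either vacuous, owing to $R^\nu = \varnothing$, or an immediate reading-off of the hypotheses.
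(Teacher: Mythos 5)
Your proposal is correct and takes essentially the same route as the paper: the paper likewise obtains this theorem by specialising the Single-carrier Piggyback Co-duality Theorem~\ref{thm:copig-simple}, discharging the named-constants hypothesis exactly as you do (via the nullary operation $1$ in the type of $\CY$, or Lemma~\ref{lem:liftconstants}), with Condition~(3)(i)(b) vacuous because $R^\nu=\varnothing$ and the remaining conditions read off from the hypotheses and the agreement of the semilattice operations on $\M^\flat$ and $\MT^\flat$. No gap to report.
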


%%%%%%%%%%%%%%%%%%%%%%%%%%%

Like the corresponding strong duality theorem for $\CCD$-based total structures,
our semilattice-based strong duality theorem is new. It is an immediate consequence of the Piggyback Strong Duality Theorem~\ref{thm:pigstrong1}(III).
By simply adding the assumption that $\MT$ is a topological structure, we could upgrade the theorem to the $\CS$-based Two-for-one Piggyback Strong Duality Theorem.

\begin{theorem}[$\CS$-based Piggyback Strong Duality Theorem] \label{thm:SLstrongpig}
Let $\M$ be a total structure with a structural reduct $\M^\flat$ in~$\CS$, let $\MT$ be a total structure with a structural reduct $\MT^\flat$ in~$\CY$ and assume that $\MT$ is an alter ego of $\M$.
Assume that there exists $\omega\in \CY(\MT^\flat, \ST)$ such that

\begin{enumerate}[\ \normalfont(1)]

\item
$\omega\circ \Clo_1(\MT)$ separates the points of $M$, and

\item
$\omega\circ \Clo_1(\M)$ separates the points of $M$.
\end{enumerate}
Then conclusions \upshape{(a)--(c)} of Theorem~\upshape{\ref{thm:pigstrong1}}
hold.
\end{theorem}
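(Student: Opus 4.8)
The plan is to recognise this theorem as the specialisation of the Piggyback Strong Duality Theorem~\ref{thm:pigstrong1}(III) to the case $\N = \S$ and $\NT = \ST$, so that $\CB = \CS$ and $\CY = \IScP(\ST)$. The entire proof therefore consists in checking that the hypotheses of Theorem~\ref{thm:pigstrong1}(III) are met and then reading off conclusions~(a)--(c). First I would confirm that the Basic Assumptions are in force: $\M$ and $\MT$ are total structures as required, while the conditions on the base categories---that $\ST$ yields a full duality on $\CS$, that $\ST$ is injective in $\CY$, and that $\S$ is injective in $\CS$---are precisely the Hofmann--Mislove--Stralka facts recorded at the start of this section.

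The one step that is not a pure matching of symbols is the provenance of the carrier. Theorem~\ref{thm:pigstrong1} requires $\omega \in \CB(\M^\flat, \N) \cap \CY(\MT^\flat, \NT)$, whereas here $\omega$ is supplied only as a member of $\CY(\MT^\flat, \ST)$. I would observe that a $\CY$-morphism is, once the topology is discarded, a homomorphism of the underlying meet-semilattices-with-$1$; since the semilattice operations carried by $\M^\flat$ and $\MT^\flat$ coincide (as noted in the preamble to this section), this forgetful passage yields $\omega \in \CS(\M^\flat, \S) = \CB(\M^\flat, \N)$. Hence $\omega$ lies in the required intersection.

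It then remains to verify Conditions~(1) and~(2) of Part~(III). Condition~(1) is immediate: both $\S$ and $\ST$ are total algebras, having meet and the nullary $1$ as their only operations and empty sets of relations and partial operations. Conditions~(2)(i) and~(2)(ii) of Part~(III) are, verbatim, Conditions~(1) and~(2) of the present statement. With every hypothesis in place, Theorem~\ref{thm:pigstrong1}(III) delivers conclusions~(a)--(c); the named-constants requirement needed for the co-duality half comes for free from Lemma~\ref{lem:liftconstants}, as flagged in the setup of Theorem~\ref{thm:pigstrong1} (and reinforced here by the presence of the nullary $1$ in the type of $\CY$, hence in that of $\MT$).

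Since the theorem is a direct corollary, there is no substantive obstacle. The only point demanding any attention is the transfer of the carrier $\omega$ from the topological hom-set $\CY(\MT^\flat, \ST)$ to the algebraic hom-set $\CS(\M^\flat, \S)$ described above, and even that is routine once one recalls that the semilattice reducts on the two sides agree. I would accordingly keep the written proof to a few lines, simply specialising the data and citing Theorem~\ref{thm:pigstrong1}(III).
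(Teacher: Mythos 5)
Your proposal is correct and follows exactly the paper's route: the paper proves this theorem simply by declaring it an immediate consequence of the Piggyback Strong Duality Theorem~\ref{thm:pigstrong1}(III), with the Hofmann--Mislove--Stralka facts supplying the Basic Assumptions for $\N = \S$ and $\NT = \ST$, and with Lemma~\ref{lem:liftconstants} (plus the nullary $1$ in the type of $\CY$) disposing of named constants. Your explicit verification that $\omega \in \CY(\MT^\flat, \ST)$ also lies in $\CS(\M^\flat, \S)$---via the agreement of the semilattice reducts of $\M^\flat$ and $\MT^\flat$ noted in the section's preamble---is a detail the paper leaves tacit, and you supply it correctly.
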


By applying this theorem when $\MT$ is simply $\M$ with an appropriate compact topology added, we obtain sufficient conditions for a semilattice-based total structure to be self-dualising. The  result in the case in which $\M$ is a total algebra was proved in~\cite[Theorem~7.4]{DJPT07}.
Applications of the self-dualising version of the theorem, including examples in which $\M$ is infinite, may be found in~\cite[Section~8]{DJPT07}.

We now present a new example that is closely related to the infinite example studied in~\cite[Section~8]{DJPT07}. Let $\V = \langle \{0, 1\}^{\mathbb N_0}; \wedge, u, \underline 1\rangle$, where $\wedge$ is defined pointwise, $\underline 1$~is the constant map onto $1$, and $u$ is the \emph{left shift operator}, that is, for all $a\in~\{0, 1\}^{\mathbb N_0}$ we have $u(a) \coloneqq   a\circ \gamma$ where $\gamma \colon \mathbb N_0\to \mathbb N_0$ is the \emph{successor function}: $\gamma(n)\coloneqq  n+1$. Let $\Tp$ be the product topology on $\{0, 1\}^{\mathbb N_0}$.

\begin{theorem}\label{ex:SLstrongpig}
Let $\W$ be topologically closed subalgebra of $\V$. Then $\W$ is strongly self dualising, that is, $\WT \coloneqq  \langle W; \wedge, u, \underline 1, \Tp\rangle$ strongly dualises~$\V$, where $\Tp$ is the subspace topology. In particular, $\V$ itself and every finite subalgebra of $\V$ is self dualising.
\end{theorem}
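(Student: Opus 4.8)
The plan is to apply the $\CS$-based Piggyback Strong Duality Theorem~\ref{thm:SLstrongpig} in its self-dualising form, taking $\M \coloneqq \W$ and $\MT \coloneqq \WT$. Since $\W$ and $\WT$ carry exactly the same operations and differ only in that $\WT$ additionally records the topology~$\Tp$, their unary term functions coincide: $\Clo_1(\W) = \Clo_1(\WT)$. Consequently the two separation hypotheses~(1) and~(2) of Theorem~\ref{thm:SLstrongpig} collapse into the single requirement that $\omega \circ \Clo_1(\W)$ separate the points of~$W$, and the whole argument reduces to verifying the structural set-up together with this one condition. (We may assume $\W$ is non-trivial, the one-element case being degenerate.)

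First I would check the structural hypotheses. The reduct $\W^\flat \coloneqq \langle W; \wedge, \underline 1\rangle$ is a subalgebra of $\V^\flat = \S^{\mathbb N_0}$ and hence a meet-semilattice with~$1$ lying in $\CS = \ISP(\S)$. Because $W$ is closed in the product topology, $\WT^\flat \coloneqq \langle W; \wedge, \underline 1, \Tp\rangle$ is a topologically closed substructure of $\ST^{\mathbb N_0}$ and so belongs to $\CY = \IScP(\ST)$. That $\WT$ is an alter ego of~$\W$ amounts to compatibility, namely that $\wedge$, $u$ and $\underline 1$ are homomorphisms with respect to~$\W$ and are continuous for~$\Tp$. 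Continuity is immediate, and the homomorphism conditions reduce to $u(a \wedge b) = u(a) \wedge u(b)$, $u(\underline 1) = \underline 1$ and the medial law for~$\wedge$, all of which follow at once from the pointwise definitions.

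For the carrier I would take $\omega \coloneqq \pi_0 \colon W \to \{0,1\}$, the projection onto the $0$-th coordinate, which is a continuous $\{\wedge, 1\}$-homomorphism and so lies in $\CY(\WT^\flat, \ST)$. It then remains to establish separation. For each $n \in \mathbb N_0$ the iterate $u^n$ belongs to $\Clo_1(\W)$, and $\pi_0(u^n(a)) = (a \circ \gamma^n)(0) = a(n)$. Hence if $a \ne b$ in~$W$, choosing $n$ with $a(n) \ne b(n)$ gives $\pi_0(u^n(a)) \ne \pi_0(u^n(b))$, so $\pi_0 \circ \Clo_1(\W)$ separates the points of~$W$; this is precisely the computation already used for Ockham algebras in the proof of Theorem~\ref{ex:Ockham}. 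The only step with genuine content is this separation, and even it is short; the main obstacle, such as it is, is the bookkeeping needed to confirm that $\WT$ is a genuine alter ego and that $\WT^\flat$ lies in~$\CY$.

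With all hypotheses verified, Theorem~\ref{thm:SLstrongpig} delivers conclusions~(a)--(c) of Theorem~\ref{thm:pigstrong1}; in particular $\WT$ fully dualises~$\W$ and $\WT$ is injective in $\CX = \IScP(\WT)$, so the duality is strong and $\W$ is strongly self-dualising. For the final assertions, $\V$ is trivially closed in itself, so the case $\W = \V$ is included; and since $\{0,1\}^{\mathbb N_0}$ is a product of discrete, hence Hausdorff, spaces, every finite subalgebra is automatically topologically closed and is therefore strongly self-dualising by the main part.
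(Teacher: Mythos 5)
Your proof is correct and follows essentially the same route as the paper: you apply the $\CS$-based Piggyback Strong Duality Theorem~\ref{thm:SLstrongpig} with carrier $\omega = \pi_0$, using the fact that $\Clo_1(\W) = \Clo_1(\WT)$ and the same $\pi_0 \circ u^n$ separation computation that the paper borrows from the proof of Theorem~\ref{ex:Ockham}. The only difference is that you spell out the alter-ego compatibility and structural-reduct bookkeeping that the paper's two-line proof leaves implicit.
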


\begin{proof} The fact that $\W$ be topologically closed subalgebra of $\V$ guarantees that $\WT$ is an alter ego of $\W$. The calculations given for the Ockham algebra $\M_1$ in the proof of Theorem~\ref{ex:Ockham} show that $\pi_0 \circ \Clo_1(\W)$ separates the points of~$W$. It follows at once from Theorem~\ref{thm:SLstrongpig} that $\WT$ strongly dualises~$\W$.
\end{proof}

\begin{example} \label{ex:SLstrongpig2}
Some interesting examples of topologically closed subalgebras of $\V$ are listed below---see Figure~\ref{fig:subalgM}.

\begin{enumerate}[ (1)]

\item
$\mathit{CF} \coloneqq  \{\, a\in V \mid a^{-1}(1) \text{ is cofinite}\,\}$ forms a closed subalgebra of~$\V$.

\item For $k\in \mathbb N$, define $a_k\colon \mathbb N_0 \to \{0, 1\}$ by $a_k(\ell) = 1 \iff \ell\geq k$. Then, for all $n\geq 3$, the set $C_n \coloneqq \{ a_k \mid 1\leq k\leq n-2\}\cup\{\underline 0, \underline 1\}$ forms a subalgebra of~$\V$ and $C_\infty \coloneqq  \{ a_k \mid k\in \mathbb N\}\cup\{\underline 0, \underline 1\}$ forms a closed subalgebra of $\V$.

\item Fix $n\in \mathbb N$ and define $b \colon \mathbb N_0 \to \{0, 1\}$ by $b(\ell) = 1 \iff \ell \equiv 0 \pmod n$. For $k\in \mathbb Z_n$, define $b_k \coloneqq  u^{k}(b)$. Then $M_n \coloneqq  \{\, b_k \mid k\in \mathbb Z_n\,\} \cup\{\underline 0, \underline 1\}$ forms a subalgebra of~$\V$.
\item Define $a \coloneqq  01010101\dotsc$, $b \coloneqq  10101010\dotsc$ and $c\coloneqq  10000\dots$. Then $N_5 \coloneqq \{\underline 0, a, b, c, \underline 1\}$ forms a subalgebra of~$\V$.
\end{enumerate}
\end{example}

\begin{figure}[ht]
\begin{center}
  \begin{tikzpicture}
    % C_n
    \begin{scope}[xshift=0cm]
      \node at (0,-1) {$\C_n$};
      \node[element] (0) at (0,0) {};
      \node[element] (an-2) at ($(0)+(0,0.75)$) {};
      \node[element] (an-3) at ($(an-2)+(0,0.75)$) {};
      \coordinate (an-4) at ($(an-3)+(0,0.75)$) {};
      \coordinate (a2) at ($(an-4)+(0,0.25)$) {};
      \node[element] (a1) at ($(a2)+(0,0.75)$) {};
      \node[element] (1) at ($(a1)+(0,0.75)$) {};
      \coordinate (dots) at ($0.5*(an-4)+0.5*(a2)$) {};
      \draw[order] (0) to (an-2);
      \draw[order] (an-2) to (an-3);
      \draw[order] (an-3) to (dots);
      \draw[order] (dots) to (a1);
      \draw[order] (a1) to (1);
      \draw[loopy] (0) to [out=290,in=250] (0);
      \draw[unary] (an-2) to [bend left] (an-3);
      \draw[unary] (an-3) to [bend left] (an-4);
      \draw[unary] (a2) to [bend left] (a1);
      \draw[unary] (a1) to [bend left] (1);
      \draw[loopy] (1) to [out=110,in=70] (1);
      \node at ($(0)+(0.2,0)$) {\makebox[0pt][l]{$0$}};
      \node at ($(an-3)+(0.2,0)$) {\makebox[0pt][l]{$a_{n-3}$}};
      \node at ($(an-2)+(0.2,0)$) {\makebox[0pt][l]{$a_{n-2}$}};
      \node at ($(a1)+(0.2,0)$) {\makebox[0pt][l]{$a_{1}$}};
      \node at ($(1)+(0.2,0)$) {\makebox[0pt][l]{$1$}};
      \node[shape=rectangle,fill=white,inner sep=10pt] at (dots) {};
      \node at (dots) {\raise5.5pt\hbox{$\vdots$}};
    \end{scope}
    % C_inf
    \begin{scope}[xshift=2.4cm]
      \node at (0,-1) {$\C_\infty$};
      \node[element] (0) at (0,0) {};
      \coordinate (a4) at ($(0)+(0,1)$) {};
      \node[element] (a3) at ($(a4)+(0,0.75)$) {};
      \node[element] (a2) at ($(a3)+(0,0.75)$) {};
      \node[element] (a1) at ($(a2)+(0,0.75)$) {};
      \node[element] (1) at ($(a1)+(0,0.75)$) {};
      \coordinate (dots) at ($0.55*(0)+0.45*(a3)$) {};
      \draw[order] (dots) to (a4);
      \draw[order] (a4) to (a3);
      \draw[order] (a3) to (a2);
      \draw[order] (a2) to (a1);
      \draw[order] (a1) to (1);
      \draw[loopy] (0) to [out=290,in=250] (0);
      \draw[unary] (a4) to [bend left] (a3);
      \draw[unary] (a3) to [bend left] (a2);
      \draw[unary] (a2) to [bend left] (a1);
      \draw[unary] (a1) to [bend left] (1);
      \draw[loopy] (1) to [out=110,in=70] (1);
      \node at ($(0)+(0.2,0)$) {\makebox[0pt][l]{$0$}};
      \node at ($(a3)+(0.2,0)$) {\makebox[0pt][l]{$a_3$}};
      \node at ($(a2)+(0.2,0)$) {\makebox[0pt][l]{$a_2$}};
      \node at ($(a1)+(0.2,0)$) {\makebox[0pt][l]{$a_1$}};
      \node at ($(1)+(0.2,0)$) {\makebox[0pt][l]{$1$}};
      \node[shape=rectangle,fill=white,inner sep=10pt] at (dots) {};
      \node at (dots) {$\vdots$};
    \end{scope}
    % M_n
    \begin{scope}[xshift=5.8cm]
      \node at (0,-1) {$\M_n$};
      \node[element] (0) at (0,0) {};
      \node[element] (1) at ($(0)+(0,2.9)$) {};
      \node[element] (a) at ($(0)+(-1.45,1.45)$) {};
      \node[element] (b) at ($(a)+(0.75,0)$) {};
      \coordinate (c) at ($(b)+(0.75,0)$) {};
      \coordinate (d) at ($(c)+(0.75,0)$) {};
      \coordinate (dots) at ($0.5*(c)+0.5*(d)$) {};
      \node[element] (e) at ($(d)+(0.75,0)$) {};
      \draw[order] (0) to (a);
      \draw[order] (0) to (b);
      \draw[order] (0) to (e);
      \draw[order] (a) to (1);
      \draw[order] (b) to (1);
      \draw[order] (e) to (1);
      \draw[loopy] (0) to [out=290,in=250] (0);
      \draw[unary] (a) to [bend left] (b);
      \draw[unary] (b) to [bend left] (c);
      \draw[unary] (d) to [bend left] (e);
      \draw[unary] (e) to [bend angle=20, bend left] (a);
      \draw[loopy] (1) to [out=110,in=70] (1);
      \node at ($(0)+(0.2,0)$) {\makebox[0pt][l]{$0$}};
      \node at ($(a)+(-0.2,-0.1)$) {\makebox[0pt][r]{$b_0$}};
      \node at ($(b)+(0.2,-0.1)$) {\makebox[0pt][l]{$b_1$}};
      \node at ($(e)+(0.2,-0.1)$) {\makebox[0pt][l]{$b_{n-1}$}};
      \node at ($(1)+(0.2,0)$) {\makebox[0pt][l]{$1$}};
      \node at (dots) {$\cdots$};
    \end{scope}
    % N_5
    \begin{scope}[xshift=9.8cm]
      \node at (0,-1) {$\N_5$};
      \node[element] (0) at (0,0) {};
      \node[element] (c) at ($(0)+(135:1.2)$) {};
      \node[element] (b) at ($(c)+(90:1.2)$) {};
      \node[element] (1) at ($(b)+(45:1.2)$) {};
      \node[element] (a) at ($0.5*(0)+0.5*(1)+(1,0)$) {};
      \draw[order] (0) to (c);
      \draw[order] (c) to (b);
      \draw[order] (b) to (1);
      \draw[order] (0) to (a);
      \draw[order] (a) to (1);
      \draw[loopy] (0) to [out=290,in=250] (0);
      \draw[unary] (c) to [bend right] (0);
      \draw[unary,<->] (a) to [bend angle=5, bend right] (b);
      \draw[loopy] (1) to [out=110,in=70] (1);
      \node at ($(0)+(0.2,0)$) {\makebox[0pt][l]{$0$}};
      \node at ($(a)+(0.2,0)$) {\makebox[0pt][l]{$a$}};
      \node at ($(b)+(-0.2,0)$) {\makebox[0pt][r]{$b$}};
      \node at ($(c)+(-0.2,0)$) {\makebox[0pt][r]{$c$}};
      \node at ($(1)+(0.2,0)$) {\makebox[0pt][l]{$1$}};
    \end{scope}
  \end{tikzpicture}
\caption{Some closed subalgebras of 
$\V$} 
\label{fig:subalgM}
\end{center}
\end{figure}
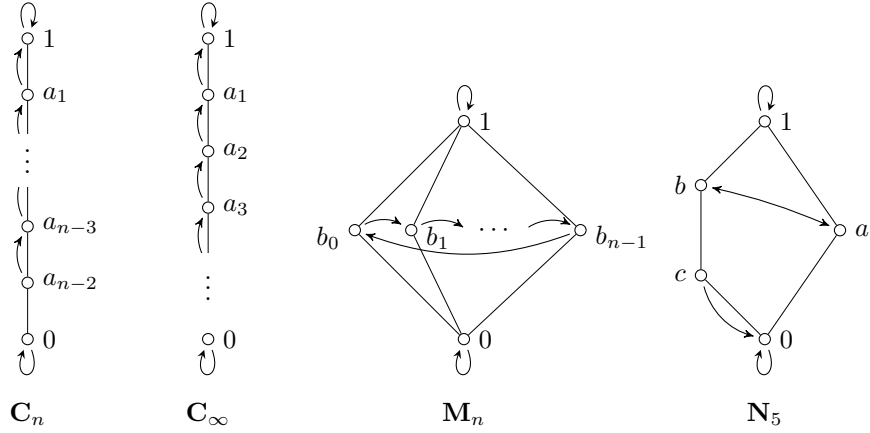

\begin{example}\label{ex:entropic}
The algebra $\mathbf E \coloneqq  \langle \{\underline 0, a_1, \underline 1\}; \wedge, u\rangle$, obtained by removing $\underline 1$ from the type of $\mathbf C_3$, is (isomorphic to) the entropic closure semilattice studied by Davey, Jackson, Pitkethly and Talukder~\cite{DJPT07}. By applying the variant of the
$\CS$-based
Piggyback Strong Duality Theorem~\ref{thm:SLstrongpig} obtained by piggybacking on the duality between semilattices and Boolean topological bounded semilattices, we can see immediately that $\twiddle E \coloneqq   \langle \{\underline 0, a_1,
\underline 1\};
\wedge, u, \underline 0, \underline 1, \Tp\rangle$ strongly dualises $\mathbf E$. This was proved directly in~\cite[Theorem~6.1]{DJPT07}.
\end{example}
%%%%%%%%%%%%%%%%%%%%%%%%%%%%%%%%%%%%%%%%%%%%%%%%%%%%%%%%%

%%%%%%%%%%%%%%%%%%%%%%%%%%%%%%%%%%%%%%%%%%%%%%%%%%%%%%%%%%%%%%%%%%%%%
\section{Proofs of the Piggyback Duality Theorems}\label{sec:proofs}
%%%%%%%%%%%%%%%%%%%%%%%%%%%%%%%%%%%%%%%%%%%%%%%%%%%%%%%%%%%%%%%%%%%%%

The proof of the Piggyback Duality Theorem~\ref{thm:pig-general} will be built from modularised components set out in a series of lemmas.  These components are combined to yield the proof of the theorem  which is  given after Lemma~\ref{lem:everything2}. 
The section concludes with the proof of the Strong Duality Theorem~\ref{thm:pigstrong1}, based on the Injectivity Lemma~\ref{lem:inj}.  

We shall begin  with a summary of our strategy for  proving the piggyback duality and co-duality theorems, in both their single carrier and general versions.
As noted already, we flag  duality and co-duality results with the tags (D) and (coD).
%%%%%

We now outline the roles of  our key lemmas, in their (D) versions.
For simplicity, assume for the purposes of this summary that we are working under the Basic Assumptions (not every lemma will require all the assumptions).  Our
objective  is to demonstrate that, under suitable assumptions, we can construct the
one-to-one map $d_{-}$ so that the diagram in Figure~\ref{fig:pig-fig1} commutes.
Assume, pro tem, that we have identified a candidate  alter ego~$\MT$ for $\M$ with
a structural reduct $\M^\flat$ in~$\CB$ and also that $\Omega$ is a selected subset of $\CB(\M^\flat,\N)$.

\begin{enumerate}[(1)]
\item
The  \textit{Commuting Triangle Lemma} \ref{lem:everything3} presupposes that  the map $d_{-}$ is well defined, for each $\A \in\CA$, and  that, for each $\alpha\in \ED \A$, the map $d_\alpha$ has domain $\CB(\A^\flat,\N)$,  that is,
\[
\bigcupPhi  = \CB(\A^\flat,\N).
\]
With these provisos, the lemma establishes that $d_{-}$ is one-to-one and that the diagram in Figure~\ref{fig:pig-fig2}  below commutes.

\item   
The \textit{Existence Lemma}~\ref{lem:everything4} addresses the issue of well-definedness of~$d_{-}$. Here the choice of alter ego $\MT$ comes into play,  and for the  first time we make use of entailment.

\item 
The \textit{Relation Preservation Lemma}~\ref{lem:everything5} and the \textit{Operation Preservation Lemma}~\ref{lem:pres3} combine to give various conditions sufficient, in combination with the conditions of the
{Existence Lemma}, to ensure that each map $d_\alpha$ is a $\CY$-morphism
(once we know that the family $\{\Phi_\omega^\A\}_{\omega\in\Omega}$ is jointly surjective, for all $\A \in \CA$).  The Operation Preservation Lemma is called on  only when $\NT$ is not purely relational. Topological input is provided by the \textit{Continuity Lemma}~\ref{lem:PhiContinuous} and, for the Operation Preservation Lemma, also  by the \textit{Morphism Lemma} \ref{lem:PhiMorphism}.  We comment on both these ancillary lemmas shortly.

\item 
Our goal is then to give conditions under which joint surjectivity holds.
The key steps towards this are provided by two important lemmas, the  \textit{Substructure  Lemma}~\ref{lem:everything1} and the \textit{Density Lemma}~\ref{lem:SepImpliesDense}.
In  the  former we need to impose restrictions on the structure~$\N$; and in the latter  we invoke the special properties of the duality for the base category included in the Basic Assumptions.

\item 
The \textit{Joint Surjectivity Lemma}~\ref{lem:everything2}  shows that, under conditions encountered already in (1)--(4), the maps in the set  $\{\Phi_\omega^\A\}_{\omega\in\Omega}$ are jointly surjective, as demanded in~(1) and~(3).
\end{enumerate}

We note that the multi-carrier case has certain features not present in the single-carrier case.
In the Substructure Lemma, and in the Joint Surjectivity Lemma which makes use of it, compatibility  issues arise  when  $|\Omega| > 1$  which force us to assume that  any operations in $\NT$ are unary or nullary.

The (coD) strand of the theory parallels the (D) strand quite closely,  the principal difference being in  the role of topology.
In almost every case  our lemmas have separate  (D) and (coD) parts.  In the proofs  we shall normally need to attend to  topological issues in just one of these (of course when $M$ and $N$ are finite no topological considerations arise) and then to prove one of the two structure assertions, calling on symmetry to obtain the other.
In such cases we shall simply  specify which of (D) and (coD) we elect to prove,
leaving it tacit  that the unproved  assertion also follows.
Exceptions occur with the Continuity Lemma and the Morphism Lemma, to which we alluded in~(3) above. The Continuity Lemma  has no (coD) component  and is
 required for the Piggyback Duality Theorem but not at all for  the Piggyback Co-duality Theorem. The Morphism Lemma does not have separated (D) and (coD) claims;
in it, the way in which a carrier map~$\omega$ relates to~$\CB$ and~$\CY$ simultaneously is crucial.
We issue a reminder that Lemma~\ref{lem:MaxImpliesClosed} comes into play
whenever we encounter entailment by $\M$. This applies to Part~(coD) in each
of the Existence Lemma, the $\sqsubseteq$-Lemma discussed below, and the Relation Preservation Lemma. 

The Existence Lemma involves entailment conditions linking the structures $\M$ and $\MT$ to the structures  $\N$ and $\NT$ via the carrier maps in $\Omega$.   The \textit{$\sqsubseteq$-Lemma}~\ref{lem:sublemma} shows that these conditions hold
in particular if there exists a reflexive, antisymmetric relation $\sqsubseteq $ on $N$ with suitable properties.  It is used to obtain Corollary~\ref{cor:sqsubseteq}.
The motivation here is to synthesise the behaviour of the order relation on~$\DT$ in the $\CCD$-based case.

The conditions imposed in Theorems~\ref{thm:pig-general} and~\ref{thm:copig-general}  guarantee that  the appropriate sets of carrier maps are jointly surjective.   It is nonetheless of  interest to ask whether this is an indispensable requirement if piggybacking is to be possible.
We are able to present  the \textit{Extended Commuting Triangle Lemma}~\ref{lem:everything3g} which does not demand joint surjectivity.

We are now ready to carry out our indicated programme.

\begin{lemma}[Commuting Triangle Lemma, for the jointly surjective case]
\label{lem:everything3}

Let $\M$ and $\N$ be structures and let $\MT$ and $\NT$ be alter egos of $\M$ and $\N$, respectively. Define $\CA\coloneqq \ISP(\M)$, $\CB \coloneqq \ISP(\N)$, $\CX\coloneqq \IScP(\MT)$ and $\CY \coloneqq  \IScP(\NT)$.

\begin{Dlist}
\item[\normalfont(D)] Assume that $\M$ has a structural reduct $\M^\flat$ in $\CB$ and let $\Omega$ be a subset of $ \CB(\M^\flat, \N)$.  Let $\A\in \CA$ and assume that for every morphism $\alpha\colon \CA(\A, \M) \to \MT$ there is a
map
\[
d_\alpha \colon \bigcupPhi \to N
\]
such that $d_\alpha \circ \Phi_\omega^\A = \omega \circ \alpha$, for all $\omega\in \Omega$
\textup(see Figure~\textup{\ref{fig:pig-fig2})}.

\begin{figure}[h!t]
\begin{center}
\begin{tikzpicture}
[auto,
 text depth=0.25ex,
 move up/.style=   {transform canvas={yshift=1.9pt}},
 move down/.style= {transform canvas={yshift=-1.9pt}},
 move left/.style= {transform canvas={xshift=-2.5pt}},
 move right/.style={transform canvas={xshift=2.5pt}}]
\node  (DA)  at   (-2,0)  {$\CA (\A, \M)$};
\node (M)   at  (2,0)   {$M$};
\node (Jim)  at( (-2,-2)   {$\phantom{\CA (\A, \M)}$};
 \node (N) at (2,-2)  {$N$};
 \node (Jim2) at  (-2.7,-1.9) {$\displaystyle \bigcupPhi$}; 
 \draw [-latex]  (DA) to node {$\alpha$} (M);
\draw [-latex] (DA) to node [swap] {$\Phi_\omega^{\A}$} (Jim);
\draw [-latex] (M) to node {$\omega$} (N);
\draw [-latex]  (Jim) to node [swap] {$d_\alpha$} (N);
\end{tikzpicture}

\caption{The equation $d_\alpha \circ \Phi_\omega^\A = \omega \circ \alpha$}\label{fig:pig-fig2}
\end{center}\end{figure}

\begin{enumerate}[\normalfont(1)]

\item If
$\Omega\circ \Clo_1(\MT)$ separates the points of~$M$, then the function $\alpha \mapsto d_\alpha$ is one-to-one.

\item
Assume that
the joint image of the family $\{\Phi_\omega^\A\}_{\omega\in \Omega}$
is $\CB(\A^\flat, \N)$.
Then $
d_{\esub {\scriptscriptstyle \A} (a)}  = k_{\A^\flat}(a)$, for all $a\in A$.
\end{enumerate}

\item[\normalfont(coD)] Assume that $\MT$ has a structural reduct $\MT^\flat$ in $\CY$ and let $\Omega$ be a subset of $ \CY(\MT^\flat, \NT)$.
Let $\X\in \CX$ and assume that for every morphism $u\colon \CX(\X, \MT) \to \M$ there is a map
\[
d_u \colon \bigcupPsi \to N
\]
such that $d_u \circ \Psi_\omega^\X = \omega \circ u$, for all $\omega\in \Omega$.

\begin{enumerate}[\normalfont(1)]

\item 
If $\Omega\circ \Clo_1(\M)$ separates the points of~$M$, then the function $u \mapsto d_u$ is one-to-one.

\item
Assume that the joint image of the family $\{\Psi_\omega^\X\}_{\omega\in \Omega}$ is $\CY(\X^\flat, \NT)$.
Then $d_{\epsub{\scriptscriptstyle \X}(x)}  = \kappa_{\X^\flat}(x)$, for all $x\in X$.

\end{enumerate}
\end{Dlist}
\end{lemma}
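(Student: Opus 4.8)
The overall plan is to prove the duality claim (D) in full and then obtain the co-duality claim (coD) by the formal symmetry between $\CA$, $\CX$, $\M$, $\MT$ and $\CB$, $\CY$, $\N$, $\NT$ that interchanges $\Phi_\omega^\A$ with $\Psi_\omega^\X$, the evaluation $\esubA$ with $\epsub\X$, and the base unit $k$ with the base counit $\kappa$. Both assertions are essentially a matter of unwinding the definitions of $\Phi_\omega^\A$, $\esubA$ and $k_{\A^\flat}$; the single genuine idea, used in~(1), is that a $\CX$-morphism commutes with the unary term functions of $\MT$, which is what upgrades separation by $\Omega$ to separation by $\Omega\circ\Clo_1(\MT)$.

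For (D)(1), I would start from the assumption that $d_\alpha=d_\beta$ for two morphisms $\alpha,\beta\colon\CA(\A,\M)\to\MT$ and aim to show $\alpha=\beta$. Composing the defining equation $d_\alpha\circ\Phi_\omega^\A=\omega\circ\alpha$ (and its counterpart for~$\beta$) with the hypothesis $d_\alpha=d_\beta$ gives $\omega\circ\alpha=\omega\circ\beta$ for every $\omega\in\Omega$; equivalently $\omega(\alpha(x))=\omega(\beta(x))$ for all $x\in\CA(\A,\M)$. To bring in $\Clo_1(\MT)$, I would recall that $\CA(\A,\M)\le\MT^A$, so that for $t\in\Clo_1(\MT)$ the pointwise-lifted term function sends $x$ to $t\circ x\in\CA(\A,\M)$, and that $\alpha$ and $\beta$, being $\CX$-morphisms, preserve~$t$, i.e.\ $\alpha(t\circ x)=t(\alpha(x))$ and likewise for $\beta$. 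Substituting $t\circ x$ for $x$ then yields $(\omega\circ t)(\alpha(x))=(\omega\circ t)(\beta(x))$ for all $\omega\in\Omega$, $t\in\Clo_1(\MT)$ and $x\in\CA(\A,\M)$. Since $\Omega\circ\Clo_1(\MT)$ separates the points of~$M$, the points $\alpha(x)$ and $\beta(x)$ cannot differ; hence $\alpha(x)=\beta(x)$ for every $x$, and therefore $\alpha=\beta$.

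For (D)(2), I would fix $a\in A$ and verify that the two maps $d_{\esubA(a)}$ and $k_{\A^\flat}(a)$ agree as functions on $\CB(\A^\flat,\N)$; here the joint-surjectivity hypothesis is precisely what guarantees that the domain $\bigcupPhi$ of $d_{\esubA(a)}$ is all of $\CB(\A^\flat,\N)$. Given $y\in\CB(\A^\flat,\N)$, I would write $y=\Phi_\omega^\A(x)=\omega\circ x$ for suitable $\omega\in\Omega$ and $x\in\CA(\A,\M)$. Applying the defining equation with $\alpha=\esubA(a)$ gives $d_{\esubA(a)}(y)=(d_{\esubA(a)}\circ\Phi_\omega^\A)(x)=\omega(\esubA(a)(x))=\omega(x(a))$, using $\esubA(a)(x)=x(a)$, while $k_{\A^\flat}(a)(y)=y(a)=(\omega\circ x)(a)=\omega(x(a))$. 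The two therefore agree on~$y$, and since $y$ was arbitrary, $d_{\esubA(a)}=k_{\A^\flat}(a)$. Consistency of this computation across different representations $y=\omega\circ x$ is automatic, since every representation yields the common value $y(a)$.

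I do not anticipate a serious obstacle: the argument is driven by definition-chasing, and the only step carrying real content is the passage in~(1) from $\omega\circ\alpha=\omega\circ\beta$ to separation by $\Omega\circ\Clo_1(\MT)$, which hinges on $\CA(\A,\M)$ being closed under the unary term functions of $\MT$ together with $\alpha$ and $\beta$ preserving them. The point needing care is bookkeeping: remembering that $d_\alpha$ is defined only on the joint image, so that~(2) genuinely requires joint surjectivity before one can speak of equality of maps on all of $\CB(\A^\flat,\N)$; and, for the dual claim~(coD), lifting the unary term functions of $\M$ on the $\CA$-object $\CX(\X,\MT)\le\M^X$ and using that the $\CA$-morphism $u$ preserves them, exactly as $\alpha$ does above.
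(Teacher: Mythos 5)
Your proposal is correct and follows essentially the same route as the paper: part (D)(2) is the identical evaluation computation $d_{\esub{\scriptscriptstyle\A}(a)}(\omega\circ x)=\omega(x(a))=k_{\A^\flat}(a)(\omega\circ x)$, and part (D)(1) uses exactly the paper's three ingredients---that $t\circ x\in\CA(\A,\M)$ because $\Clo_1(\MT)\subseteq\End(\M)$, that the $\CX$-morphisms $\alpha,\beta$ preserve the pointwise-lifted unary term functions, and the separation hypothesis---with the paper merely phrasing the argument contrapositively (from $\alpha\ne\beta$ it picks $x$, $g\in\Clo_1(\MT)$ and $\omega\in\Omega$ witnessing $d_\alpha(\omega\circ g\circ x)\ne d_\beta(\omega\circ g\circ x)$). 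The paper likewise proves only (D) and obtains (coD) by the same symmetry you invoke.
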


\begin{proof}  We prove the (D) assertions.  Those for (coD) are obtained likewise.

(D)(1) Let $\alpha, \beta \colon \CA(\A, \M) \to \MT$ be $\CX$-morphisms with $\alpha\ne \beta$. We shall show that $d_\alpha \ne d_\beta$. Since $\alpha\ne \beta$, there exists $x\in \CA(\A, \M)$ with $\alpha(x) \ne \beta(x)$. Thus, since $\Omega\circ \Clo_1(\MT)$ separates the points of $M$, there exists $g\in \Clo_1(\MT)$ and $\omega\in \Omega$ with $\omega(g(\alpha(x))) \ne \omega(g(\beta(x)))$. 
Since $g$ is a unary term function of~$\MT$, both $\alpha$ and $\beta$ preserve $g$, and $g\circ x \in \CA(\A, \M)$ as $g$ is an endomorphism of $\M$. 
Hence, since $d_\alpha \circ \Phi_\omega^\A = \omega \circ \alpha$ and $d_\beta \circ \Phi_\omega^\A = \omega \circ \beta$, we have
\begin{align*}
d_\alpha(\omega\circ g\circ x) &= d_\alpha(\Phi_\omega^\A(g\circ x)) = \omega(\alpha(g\circ x)) = \omega(\alpha(g^{\CA(\A, \M)}(x)))\\
& = \omega(g(\alpha(x))) \ne \omega(g(\beta(x))) \\
& = \omega(\beta(g^{\CA(\A, \M)}(x)))
 = \omega(\beta(g\circ x))  = d_\beta(\Phi_\omega^\A(g\circ x))\\
  &= d_\beta(\omega\circ g\circ x).
\end{align*}
Thus, $d_\alpha \ne d_\beta$, whence the map $\alpha \mapsto d_\alpha$ is one-to-one.

(D)(2)
Let $z \in \CB(\A^\flat, \N)$. By the assumed joint surjectivity, there  exist
 $\omega\in \Omega$ and $x\in \CA(\A, \M)$ with $z = \Phi_\omega^\A(x)$.
As $d_{\esub{\scriptscriptstyle \A} (a)}\circ \Phi_\omega^\A = \omega \circ \esubA (a)$, we have 
%\begin{multline*}
%d_{\esub{\scriptscriptstyle \A} (a)}(z) = d_{\esub{\scriptscriptstyle \A} (a)}(\Phi_\omega^\A(x)) = \omega(\esubA (a)(x))\\ = \omega(x(a)) = k_{\A^\flat}(a)(\omega\circ x) = k_{\A^\flat}(a)(z).
%\tag*{$\qed$}\renewcommand{\qedsymbol}{}\qedhere
%\end{multline*}
\begin{align*}
d_{\esub{\scriptscriptstyle \A} (a)}(z) = d_{\esub{\scriptscriptstyle \A} (a)}(\Phi_\omega^\A(x)) &= \omega(\esubA (a)(x))\\ &= \omega(x(a)) = k_{\A^\flat}(a)(\omega\circ x) = k_{\A^\flat}(a)(z).
\qedhere
\end{align*}
\end{proof}

\begin{lemma}[Existence Lemma]
\label{lem:everything4}
Let $\M$ and $\N$ be structures and let $\MT$ and $\NT$ be alter egos of $\M$ and $\N$, respectively.
Let
$\CA\coloneqq \ISP(\M)$, $\CB \coloneqq \ISP(\N)$, $\CX\coloneqq \IScP(\MT)$ and $\CY \coloneqq  \IScP(\NT)$.

\begin{Dlist}
\item[\normalfont(D)] Assume that $\M$ is a total structure that has a structural reduct $\M^\flat$ in $\CB$ and let $\Omega$ be a subset of $ \CB(\M^\flat, \N)$.  Let $\A\in \CA$ and let $\alpha \colon \CA(\A, \M) \to \MT$  be an $\CX$-morphism. Then, under either of the conditions listed below, there exists a  map
\[
d_\alpha \colon \bigcupPhi \to N
\]
such that $d_\alpha \circ \Phi_\omega^\A = \omega \circ \alpha$, for all $\omega\in \Omega$.

\begin{enumerate}[\normalfont (1)]

\item $\Omega = \{\omega\}$ and either
\begin{rmlist}
\item[\normalfont(i)]
$\MT$ entails each binary relation $r$ on $M$
which forms a substructure of $\M^2$ that is maximal in $\ker(\omega)$,
or

\item[\normalfont(ii)]
$\omega \circ \Clo_1(\M)$ separates the points of $M$ \textup(in which case $\Phi_\omega^\A$ is one-to-one\textup).
\end{rmlist}

\item The structure $\MT$ entails each relation in $\Omegamax {\M} {\Delta_N}$.

\end{enumerate}

\item[\normalfont(coD)] Assume that $\MT = \langle M; G, R, \Tp\rangle$ is a total structure that has a structural reduct $\MT^\flat$ in $\CY$ and let $\Omega$ be a subset of $\CY(\MT^\flat, \NT)$.  Let $\X\in \CX$ and let $u \colon \CX(\X, \MT) \to \M$ be an $\CA$-morphism.
Then, under either of the conditions listed below, there exists a map
\[
d_u \colon \bigcupPsi \to N
\]
such that $d_u \circ \Psi_\omega^\X = \omega \circ u$, for all $\omega\in \Omega$.

\begin{enumerate}[\normalfont(1)]
\item $\Omega = \{\omega\}$
and either

\begin{rmlist}
\item[\normalfont(i)]  
the operations in $G$ are continuous and $\M$ entails each binary relation $r$ on~$M$ which forms a substructure of $\MT^2$ that is maximal in $\ker(\omega)$,
or

\item[\normalfont(ii)]
$\omega \circ \Clo_1(\MT)$ separates the points of $M$ \textup(in which case $\Psi_\omega^\X$ is one-to-one\textup).
\end{rmlist}

\item 
The operations in $G$ are continuous and the structure $\M$ entails each relation in $\Omegamax {\stwiddle M} {\Delta_N}$.

\end{enumerate}

\end{Dlist}
\end{lemma}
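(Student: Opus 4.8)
The plan is to construct, for a fixed $\A\in\CA$ and $\CX$-morphism $\alpha\colon\CA(\A,\M)\to\MT$, the map $d_\alpha$ on the joint image $\bigcupPhi$ by the only rule consistent with the required identity, namely
$d_\alpha(\omega\circ x):=\omega(\alpha(x))$ for $\omega\in\Omega$ and $x\in\CA(\A,\M)$;
then $d_\alpha\circ\Phi_\omega^\A=\omega\circ\alpha$ holds automatically. Consequently the whole of Part~(D) reduces to a single \emph{well-definedness} statement: whenever $\omega_1\circ x=\omega_2\circ y$ with $\omega_1,\omega_2\in\Omega$ and $x,y\in\CA(\A,\M)$, we must deduce $\omega_1(\alpha(x))=\omega_2(\alpha(y))$.

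I would first dispose of the entailment hypotheses, that is, Condition~(2) together with its single-carrier instance~(1)(i). The key device is the pair map $\langle x,y\rangle\colon\A\to\M^2$, $a\mapsto(x(a),y(a))$, which is a homomorphism, so its image $s$ forms a substructure of $\M^2$; the equation $\omega_1\circ x=\omega_2\circ y$ says precisely that $s\subseteq(\omega_1,\omega_2)^{-1}(\Delta_N)$. Since $\M$ is total, unions of chains of substructures contained in $(\omega_1,\omega_2)^{-1}(\Delta_N)$ are again such substructures, so a routine Zorn's-lemma argument extends $s$ to a substructure $r$ of $\M^2$ that is maximal in $(\omega_1,\omega_2)^{-1}(\Delta_N)$; thus $r\in\Omegamax{\M}{\Delta_N}$. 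Because the image of $\langle x,y\rangle$ lies in $r$, the pair $(x,y)$ lies in the lift $r^{\CA(\A,\M)}$, so the hypothesis that $\MT$ entails $r$ forces $(\alpha(x),\alpha(y))\in r\subseteq(\omega_1,\omega_2)^{-1}(\Delta_N)$, which is exactly $\omega_1(\alpha(x))=\omega_2(\alpha(y))$. Specialising to $\omega_1=\omega_2=\omega$ gives $(\omega,\omega)^{-1}(\Delta_N)=\ker(\omega)$ and $r\in\omegamax{\M}{\Delta_N}$, which is Condition~(1)(i). The main structural obstacle is exactly this packaging step: recognising that $(x,y)$ can be captured inside a \emph{maximal} relation so that entailment becomes applicable.

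For the alternative single-carrier hypothesis~(1)(ii) I would instead show that $\Phi_\omega^\A$ is one-to-one, so that well-definedness is vacuous. If $\omega\circ x=\omega\circ y$, then the image of $\langle x,y\rangle$ is a substructure of $\M^2$ contained in $\ker(\omega)$. Since the members of $\Clo_1(\M)$ are endomorphisms of $\M$ and hence act coordinatewise on $\M^2$, substructures of $\M^2$ are closed under them; the separation hypothesis that $\omega\circ\Clo_1(\M)$ separates the points of $M$ therefore forces this substructure into $\Delta_M$ (the observation recorded after Theorem~\ref{thm:pig-simple}). Thus $x=y$, and $\Phi_\omega^\A$ is injective.

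Finally, the (coD) assertions follow by the symmetry between the two strands, interchanging $\M\leftrightarrow\MT$, $\N\leftrightarrow\NT$, $\Phi_\omega^\A\leftrightarrow\Psi_\omega^\X$, and entailment by $\MT$ with entailment by $\M$; the pair map $\langle\alpha,\beta\rangle\colon\X\to\MT^2$ replaces $\langle x,y\rangle$, and Zorn's lemma is applied in the non-topological reduct $\M'$ of $\MT$. Here lies the one genuinely new ingredient and the chief extra obstacle: entailment by $\M$ is only meaningful for relations that form \emph{topologically closed} substructures of $\MT^2$, so the maximal relation $r$ produced by Zorn must be shown to be closed. This is supplied by Lemma~\ref{lem:MaxImpliesClosed}, using the hypothesis that the operations in $G$ are continuous, the continuity of each $\omega\in\Omega$ as a $\CY$-morphism, and the closedness of $\Delta_N$ in the Hausdorff space $N$. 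Once closedness is secured, the Zorn-plus-entailment argument and the injectivity argument for~(coD)(1)(ii) transcribe verbatim, the latter using $\Clo_1(\MT)\subseteq\End(\MT)$ in place of $\Clo_1(\M)\subseteq\End(\M)$.
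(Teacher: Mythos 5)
Your proposal is correct and follows essentially the same route as the paper's proof: define $d_\alpha$ by the forced rule, reduce everything to well-definedness, capture the pair $(x,y)$ (respectively $(\alpha,\beta)$) inside the image of the product morphism, extend by Zorn to a relation maximal in $(\omega_1,\omega_2)^{-1}(\Delta_N)$, apply entailment, observe that (1)(i) is the single-carrier instance of (2), and prove injectivity of $\Phi_\omega^\A$ (resp.\ $\Psi_\omega^\X$) from the separation hypothesis in case (1)(ii). The only cosmetic differences are that the paper proves the (coD) strand and obtains (D) by symmetry (you do the reverse), handling the topological closedness of the maximal relations via the standing reminder about Lemma~\ref{lem:MaxImpliesClosed}, which you correctly make explicit, and that the paper's injectivity argument is the same computation as yours written pointwise rather than via the substructure-in-$\Delta_M$ observation.
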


\begin{proof}
We shall prove the (coD)  results.
First note that the condition  in (coD)(1)(i) for the single-carrier case is a
special instance of condition (coD)(2) since a binary relation on~$M$  forms a substructure of $\MT^2$ that is maximal in $\ker(\omega) = (\omega, \omega)^{-1}(\Delta_N)$ if and only if it belongs to $\omegamax {\stwiddle M} {\Delta_N}$.
 
Now  assume (coD)(2) holds. For $\gamma\in \Psi_\omega^\X( \CX(\X, \MT))$, we ``define" $d_u(\gamma) \coloneqq  \omega(u(\alpha))$, where~$\alpha$ is any element of $\CX(\X, \MT)$ for which $\Psi_\omega^\X(\alpha) =~\gamma$. Of course, we must show that this choice is independent of the choice of $\omega$ and $\alpha$. Assume that $\omega_1, \omega_2\in\Omega$. We must prove that $\omega_1(u(\alpha)) = \omega_2(u(\beta))$ whenever $\alpha, \beta\in \CX(\X, \MT)$ are such that  $\Psi_{\omega_1}^\X(\alpha) = \Psi_{\omega_2}^\X(\beta)$, that is,  $\omega_1\circ \alpha = \omega_2\circ \beta$. Since $\MT$ is a total structure, the image of the natural product morphism $\alpha\sqcap \beta \colon \X\to \MT^2$, that is, the relation $(\alpha\sqcap \beta)(X) = \{\, (\alpha(x), \beta(x)) \mid x\in X\,\}$, is a closed substructure of $\MT^2$, and since $\omega_1\circ \alpha = \omega_2\circ \beta$ it follows at once that $(\alpha\sqcap \beta)(X)\subseteq (\omega_1,\omega_2 )^{-1}(\Delta_N)$. Let $s$ be a substructure of $\MT^2$ that contains $(\alpha\sqcap \beta)(X)$ and is maximal in $(\omega_1,\omega_2 )^{-1}(\Delta_N)$; thus $s\in \Omegamax {\stwiddle M} {\Delta_N}$, and consequently $u$ preserves $s$, as $\M$ entails~$s$, by assumption. We have $(\alpha, \beta)\in s^{\CX(\X, \stwiddle M)}$, by construction, and hence $(u(\alpha), u(\beta))\in s$, as $u$ preserves~$s$. Since $s\subseteq (\omega_1,\omega_2 )^{-1}(\Delta_N)$, it follows that $\omega_1(u(\alpha)) = \omega_2(u(\beta))$, as required.

It remains to consider the case when  (coD)(1)(ii)
holds. To show that there exists a map $d_u\colon \Psi_\omega^\X( \CX(\X, \MT)) \to \N$ satisfying $d_u \circ \Psi_\omega^\X = \omega \circ u$, it suffices to prove that $\ker(\Psi _\omega^\X)\subseteq \ker (\omega\circ u)$.  In fact, we shall prove something much stronger, namely that $\ker(\Psi _\omega^\X) = \Delta_{\CX (\X, \stwiddle M)}$, that is, $\Psi _\omega^\X$ is one-to-one. Let $\alpha, \beta \in \CX(\X, \MT)$ with $(\alpha, \beta)\in \ker(\Psi _\omega^\X)$, so $\omega\circ \alpha = \omega \circ \beta$, and choose $x\in X$. Then, for all $t
\in \Clo_1(\MT)$, we have
{\allowdisplaybreaks
\begin{alignat*}{2}
\omega(t(\alpha(x))) &= \omega(\alpha(t(x)))&\hspace*{3em} &\text{as $\alpha$ preserves $t$}\\
& = \omega(\beta(t(x)))&
 &\text{as $\omega\circ \alpha = \omega \circ \beta$}\\
&= \omega(t(\beta(x)))& &\text{as $\beta$ preserves $t$}.
\end{alignat*}
}
Since $\omega \circ \Clo_1(\MT)$ separates the points of $M$, it follows that $\alpha(x) = \beta(x)$ and so
 $\alpha = \beta$, as $x\in X$ was chosen arbitrarily. Hence $\ker(\Psi _\omega^\X) = \Delta_{\CX (\X, \stwiddle M)}$.
\end{proof}

%%%%%%%%%%%%%%%%%%%%%%%%%%%%%%%%%%%%%%%%%%%%%%%%%%%%%%%%%%%

We shall now present the lemma  which gives sufficient conditions for
(D)(2) and (coD)(2) in the Existence Lemma to hold.  This result will immediately yield Corollary~\ref{cor:sqsubseteq} once the piggyback duality and co-duality theorems have been established.
Note that in the (coD) part of Corollary~\ref{cor:sqsubseteq}, we assume that
$\sqsubseteq$ is part of the structure on $\N$. As $\NT$ is an alter ego of $\N$, the relation $\sqsubseteq$ is therefore topologically closed.
 
\begin{lemma}[$\sqsubseteq$-Lemma] \label{lem:sublemma}
Let $\M$ and $\N$ be structures and let $\MT$ and $\NT$ be alter egos of~$\M$ and~$\N$ respectively.  Let $\CB\coloneqq \ISP(\N)$ and $\CY \coloneqq  \IScP(\NT)$.

\begin{Dlist}
\item[\normalfont (D)] 
Assume that $\M$ has a structural reduct $\M^\flat$ in  $\CB$ and let $\Omega$ be a subset of $\CB(\M^\flat,\N)$. Assume that $\sqsubseteq $ is a reflexive, antisymmetric binary relation on~$N$ such that $\MT$ entails each relation in $\Omegamax {\M} \sqsubseteq$. Then $\MT$ entails each relation in  $\Omegamax {\M} {\Delta_N}$.

\item[\normalfont (coD)]
Let $\MT = \langle M;G,R, \Tp\rangle$ with the operations in $G$ continuous. Assume that $\MT$ has a structural reduct $\MT^\flat$ in $\CY$ and let $\Omega$ be a subset of $\CY(\MT^\flat,\NT)$. Assume that $\sqsubseteq $ is a topologically closed reflexive, antisymmetric binary relation  on~$N$ such that $\M$ entails each relation in $\Omegamax {\stwiddle M} \sqsubseteq$. Then $\M$ entails each relation in  $\Omegamax {\stwiddle M} {\Delta_N}$.
\end{Dlist}
\end{lemma}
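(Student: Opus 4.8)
The plan is to prove the duality assertion~(D) in full and to obtain the co-duality assertion~(coD) by the mirror-image argument, the only extra ingredient being the Closed Maximal Relations Lemma~\ref{lem:MaxImpliesClosed}, which will guarantee that the relations we build are topologically closed (so that entailment by~$\M$ is meaningful). Throughout, the decisive elementary observation is that reflexivity and antisymmetry of~$\sqsubseteq$ give
\[
\Delta_N \subseteq {\sqsubseteq}, \qquad \Delta_N \subseteq {\sqsupseteq}, \qquad {\sqsubseteq} \cap {\sqsupseteq} = \Delta_N ,
\]
where $\sqsupseteq$ denotes the converse of~$\sqsubseteq$. Thus the diagonal $\Delta_N$ is recovered as the intersection of $\sqsubseteq$ with its converse, and the whole proof is an exercise in transporting this identity back along the carriers.

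Before the main construction I would record two closure properties of entailment, each immediate from the definition. First, if $\MT$ entails binary relations $s_1$ and $s_2$ on~$M$, then it entails $s_1 \cap s_2$: the intersection of substructures of $\M^2$ is again a substructure, and any $\CX$-morphism preserving both $s_1$ and $s_2$ preserves $s_1 \cap s_2$. Second, if $\MT$ entails a binary relation~$s$, then it entails its converse $s^{\smile} \coloneqq \{\,(a,b)\mid (b,a)\in s\,\}$: preservation transfers along the coordinate-swap isomorphism of $\M^2 = \M\times\M$, which carries substructures to substructures. (Both properties hold verbatim with $\M$ and $\MT$ interchanged.)

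Now fix $s \in \Omegamax{\M}{\Delta_N}$, say $s$ is a substructure of $\M^2$ maximal in $(\omega_1,\omega_2)^{-1}(\Delta_N)$ for some $\omega_1,\omega_2 \in \Omega$. Since $\Delta_N \subseteq {\sqsubseteq}$, the substructure~$s$ lies in $(\omega_1,\omega_2)^{-1}(\sqsubseteq)$; as unions of chains of substructures contained in this set are again such, I extend~$s$ to a substructure $s_1$ that is maximal in $(\omega_1,\omega_2)^{-1}(\sqsubseteq)$, so $s_1 \in \Omegamax{\M}{\sqsubseteq}$ and $\MT$ entails~$s_1$ by hypothesis. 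For the opposite inequality I pass to converses: $s^{\smile}$ is a substructure of $\M^2$ with $s^{\smile} \subseteq (\omega_2,\omega_1)^{-1}(\Delta_N) \subseteq (\omega_2,\omega_1)^{-1}(\sqsubseteq)$, so I extend $s^{\smile}$ to a substructure $t$ maximal in $(\omega_2,\omega_1)^{-1}(\sqsubseteq)$; then $t \in \Omegamax{\M}{\sqsubseteq}$ is entailed by~$\MT$, and setting $s_2 \coloneqq t^{\smile}$ yields a substructure of $\M^2$ that contains~$s$, is entailed by~$\MT$ (by converse-closure), and is contained in $(\omega_1,\omega_2)^{-1}(\sqsupseteq)$.

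Finally $s_1 \cap s_2$ is a substructure of $\M^2$ with
\[
s \subseteq s_1 \cap s_2 \subseteq (\omega_1,\omega_2)^{-1}(\sqsubseteq) \cap (\omega_1,\omega_2)^{-1}(\sqsupseteq) = (\omega_1,\omega_2)^{-1}(\Delta_N),
\]
so the maximality of~$s$ forces $s = s_1 \cap s_2$; intersection-closure of entailment then shows $\MT$ entails~$s$, establishing~(D). For~(coD) the identical four steps run with $\M$ and $\MT$ interchanged and $\sqsubseteq$ now topologically closed; here I would invoke Lemma~\ref{lem:MaxImpliesClosed}---the carriers $\omega_i \in \CY(\MT^\flat,\NT)$ being continuous and the operations in~$G$ continuous---to conclude that $s_1$, $t$, and hence $s_2 = t^{\smile}$ are \emph{closed} substructures of $\MT^2$, whence $s = s_1 \cap s_2$ is a closed substructure and entailment by~$\M$ is well-defined. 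The only genuine point requiring care is keeping the ordered pairs $(\omega_1,\omega_2)$ and $(\omega_2,\omega_1)$ straight when passing through the converse, so that the extended relation $t$ genuinely lands in $\Omegamax{\M}{\sqsubseteq}$ (respectively $\Omegamax{\stwiddle M}{\sqsubseteq}$); everything else is routine.
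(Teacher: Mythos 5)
Your proposal is correct and takes essentially the same route as the paper: the paper likewise extends $s$ to relations $t_1$ and $t_2$ maximal in $(\omega_1,\omega_2)^{-1}(\sqsubseteq)$ and $(\omega_1,\omega_2)^{-1}(\sqsupseteq)$ and uses reflexivity and antisymmetry to obtain $s=t_1\cap t_2$, with your explicit converse-closure and intersection-closure properties of entailment being precisely the steps the paper leaves tacit (in particular its unjustified assertion that the hypothesis also yields entailment of every relation in $\Omegamax{\stwiddle M}{\sqsupseteq}$). The only cosmetic difference is that you prove (D) and transfer to (coD) by symmetry plus Lemma~\ref{lem:MaxImpliesClosed}, whereas the paper proves (coD) and notes that topology plays no overt role, so either statement yields the other.
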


\begin{proof}
Topology plays no overt role in the proof here, so it would suffice to prove either statement, but to align with the proof of the Existence Lemma we establish  the (coD) statement. Under the assumptions made there we can also assert that
$\M$ entails each relation in $\Omegamax {\stwiddle M} \sqsupseteq$. We shall show that $\M$ entails every relation in $\Omegamax {\stwiddle M} {\Delta_N}$.

Let $s\in \Omegamax {\stwiddle M} {\Delta_N}$. Thus $s$ is a subuniverse of $\M^2$ and  there exist $\omega_1, \omega_2\in \Omega$ with $s$ maximal in $(\omega_1, \omega_2)^{-1}(\Delta_N)$. As $\sqsubseteq $ is reflexive, % we have
$\Delta_N$ is contained in ${\sqsubseteq }$ and so  $s\subseteq (\omega_1, \omega_2)^{-1}(\sqsubseteq)$ and $s\subseteq (\omega_1, \omega_2)^{-1}(\sqsupseteq )$. Let $t_1$ and $t_2$ be subuniverses of $\M^2$ with $s\subseteq t_1$ and $s\subseteq t_2$ such that $t_1$ and $t_2$ are maximal in $(\omega_1, \omega_2)^{-1}(\sqsubseteq)$ and $(\omega_1, \omega_2)^{-1}(\sqsupseteq)$, respectively; so $t_1\in \Omegamax {\stwiddle M} \sqsubseteq$ and $t_2\in \Omegamax {\stwiddle M} \sqsupseteq$. Let $(a, b)\in t_1\cap t_2$. Then $\omega_1(a)\sqsubseteq \omega_2(b)$ because $t_1\subseteq (\omega_1, \omega_2)^{-1}(\sqsubseteq)$, and $\omega_1(a)\sqsupseteq\omega_2(b)$ because $t_2\subseteq (\omega_1, \omega_2)^{-1}(\sqsupseteq )$. Hence $\omega_1(a) = \omega_2(b)$, as $\sqsubseteq $ is anti\-symmetric, and so $t_1\cap t_2\subseteq (\omega_1, \omega_2)^{-1}(\Delta_N)$. As $s\subseteq t_1\cap t_2$ and $t_1\cap t_2$ is a subuniverse, the maximality of $s$ in $(\omega_1, \omega_2)^{-1}(\Delta_N)$ guarantees that $s = t_1\cap t_2$. Since $\M$ entails $t_1$ and $t_2$, by assumption, it follows that $\M$ entails $s$.
\end{proof}

\begin{lemma}[Relation Preservation Lemma]
\label{lem:everything5} Let $\M$ and $\N$ be structures and let $\MT$ and $\NT$ be alter egos of $\M$ and $\N$, respectively. Define $\CA\coloneqq \ISP(\M)$, $\CB \coloneqq \ISP(\N)$, $\CX\coloneqq \IScP(\MT)$ and $\CY \coloneqq  \IScP(\NT)$.

\begin{Dlist}
\item[\normalfont(D)] 
Assume that $\M$ is a total structure that has a structural reduct $\M^\flat$ in $\CB$ and let $\Omega$ be a subset of $ \CB(\M^\flat, \N)$.  Let $\A\in \CA$ and let $\alpha \colon \CA(\A, \M) \to \MT$  be an $\CX$-morphism and assume that  there is a map
\[
d_\alpha \colon \bigcupPhi \to N
\]
such that $d_\alpha \circ \Phi_\omega^\A = \omega \circ \alpha$, for all $\omega\in \Omega$. Let $r$ be a subuniverse of~$\N^n$, for some $n\in \mathbb N$, and extend $r$ pointwise to the joint image of the family of maps $\{\Phi_\omega^\A\}_{\omega\in \Omega}$.
 Then $d_\alpha$ preserves $r$ provided the structure $\MT$ entails each relation in $\Omegamax {\M} r$.

\item[\normalfont (coD)] Assume that $\MT$ is a total structure with continuous operations that has a structural reduct $\MT^\flat$ in $\CY$ and let $\Omega$ be a subset of $\CY(\MT^\flat, \NT)$.  Let $\X\in \CX$ and let $u \colon \CX(\X, \MT) \to \M$  be an $\CA$-morphism and assume that there is a map
\[
d_u \colon \bigcupPsi \to N
\]
such that $d_u \circ \Psi_\omega^\X = \omega \circ u$, for all $\omega\in \Omega$. Let $r$ be a topologically closed subuniverse of $\N^n$, for some $n\in \mathbb N$,
and extend $r$ pointwise to the joint image of the family of maps $\{\Psi_\omega^\X\}_{\omega\in \Omega}$. Then $d_u$ preserves $r$ provided the structure $\M$ entails each relation in $\Omegamax {\stwiddle M} r$.

\end{Dlist}
\end{lemma}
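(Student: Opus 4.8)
The plan is to prove the (coD) assertion in full, since that is where the topological subtleties live, and then obtain the (D) assertion by the symmetry running through the section, exactly as was done for the Existence Lemma. The engine of the argument is the same product-morphism trick used there, now applied to an $n$-tuple and to the relation $r$ rather than to $\Delta_N$.

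For (coD), suppose $\gamma_1, \dots, \gamma_n$ lie in the joint image $\bigcupPsi$ and are related by the pointwise lift of $r$, that is, $(\gamma_1(x), \dots, \gamma_n(x)) \in r$ for every $x \in X$. First I would write each $\gamma_i = \Psi_{\omega_i}^\X(\alpha_i) = \omega_i \circ \alpha_i$ with $\omega_i \in \Omega$ and $\alpha_i \in \CX(\X, \MT)$, so that the defining property $d_u \circ \Psi_{\omega_i}^\X = \omega_i \circ u$ gives $d_u(\gamma_i) = \omega_i(u(\alpha_i))$. The goal then reduces to the single membership
\[
(\omega_1(u(\alpha_1)), \dots, \omega_n(u(\alpha_n))) \in r.
\]

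To produce a witness in $\Omegamax{\stwiddle M}{r}$, I would consider the natural product morphism $\alpha_1 \sqcap \dots \sqcap \alpha_n \colon \X \to \MT^n$. Since $\MT$ is a total structure, its image $S \coloneqq \{\, (\alpha_1(x), \dots, \alpha_n(x)) \mid x \in X \,\}$ is a closed substructure of $\MT^n$, and the pointwise hypothesis says precisely $S \subseteq (\omega_1, \dots, \omega_n)^{-1}(r)$. By Zorn's lemma---the union of a chain of subuniverses being a subuniverse, as the operations are finitary---I would enlarge $S$ to a substructure $s$ of $\MT^n$ maximal in $(\omega_1, \dots, \omega_n)^{-1}(r)$; by construction $s \in \Omegamax{\stwiddle M}{r}$. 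Because $r$ is topologically closed and each $\omega_i$ is continuous, Lemma~\ref{lem:MaxImpliesClosed} guarantees that $s$ is topologically closed, so that entailment of $s$ by $\M$ is even meaningful. Invoking the hypothesis, $\M$ entails $s$, hence the $\CA$-morphism $u$ preserves $s$; and since $S \subseteq s$ the tuple $(\alpha_1, \dots, \alpha_n)$ lies in the lift $s^{\CX(\X, \stwiddle M)}$, so $(u(\alpha_1), \dots, u(\alpha_n)) \in s \subseteq (\omega_1, \dots, \omega_n)^{-1}(r)$, which is exactly the membership displayed above.

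The point to watch---the closest thing to an obstacle---is the choice of $s$: it must be chosen to contain the \emph{whole} image $S$, not merely to be some maximal relation sitting inside the preimage, for otherwise $(\alpha_1,\dots,\alpha_n)$ need not fall into its lift and the entailment step fails. In the (coD) case the second delicate point is that entailment by $\M$ demands a topologically closed relation, which is precisely why Lemma~\ref{lem:MaxImpliesClosed} (and hence the standing continuity of the operations in $G$) is needed here. For the (D) statement the same four steps apply with $\Psi_\omega^\X$, $u$, $\MT$, $\CX$ replaced by $\Phi_\omega^\A$, $\alpha$, $\M$, $\CA$: there $S$ is a substructure of $\M^n$, entailment is by $\MT$ and imposes no closedness requirement, and so no appeal to Lemma~\ref{lem:MaxImpliesClosed} is required.
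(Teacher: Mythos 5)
Your proof is correct and follows essentially the paper's own argument: the paper writes out the (D) case in full---the product morphism $x_1\sqcap\dots \sqcap x_n$, a substructure $s\in \Omegamax{\M}{r}$ chosen maximal in $(\omega_1,\dots,\omega_n)^{-1}(r)$ and containing its image, then entailment---and declares (coD) a simple modification, whereas you write out the mirror-image (coD) case and recover (D) by symmetry. Your explicit appeals to Zorn's lemma for the existence of $s$ and to the Closed Maximal Relations Lemma~\ref{lem:MaxImpliesClosed} for the topological closedness of $s$ (needed so that entailment by $\M$ makes sense) correctly fill in steps the paper leaves tacit, the latter being exactly the tacit use flagged in the remark preceding that lemma.
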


\begin{proof}
We prove (D). The proof of (coD) is a simple modification.
Let $r$ be a subuniverse of $\N^n$ and assume that $\MT$ entails every relation in $\Omegamax {\M} r$. Let $z_1, \dots, z_n\in \bigcupPhi$  with $(z_1, \dots, z_n)\in r^{\CB(\A^\flat, \N)}$. Thus there exist $\omega_i\in\Omega$ and $x_i\in \CA(\A, \M)$ with $\Phi_{\omega_i}(x_i) = z_i$, that is $\omega_i\circ x_i = z_i$, for all $i\in \{1, \dots, n\}$. Since $d_\alpha(z_i) \coloneqq  \omega_i(\alpha(x_i))$, we must prove that $(\omega_1(\alpha(x_1)),\dots,  \omega_n(\alpha(x_n)))\in r$. As $\M$ is a total structure, the image of the natural product morphism $x_1\sqcap\dots \sqcap x_n \colon \A\to \M^n$, that is, the relation
\[
(x_1\sqcap\dots \sqcap x_n)(N)  = \{\, (x_1(a), \dots, x_n(a)) \mid a\in A\,\},
\]
is a substructure of $\M^n$. Since $(\omega_1\circ x_1,  \dots, \omega_n\circ x_n)\in r^{\CB(\A^\flat, \N)}$, we have $(\omega_1(x_1(a)),  \dots, \omega_n(x_n(a)))\in r$, for all $a\in A$, by the definition of $r^{\CB(\A^\flat, \N)}$. It follows at once that $(x_1\sqcap\dots \sqcap x_n)(N) \subseteq (\omega_1,\dots,\omega_n )^{-1}(r)$. Let $s$ be a substructure of $\M^n$ that contains $(x_1\sqcap\dots \sqcap x_n)(N) $ and is maximal in $(\omega_1,\dots,\omega_n )^{-1}(r)$; thus $s\in \Omegamax {\stwiddle M} r$. Then $(x_1,\dots, x_n)\in s^{\CA(\A, \M)}$ as $(x_1\sqcap\dots \sqcap x_n)(N) \subseteq s$, and hence $(\alpha(x_1), \dots, \alpha(x_n))\in s$, as $\alpha$ preserves $s$, by assumption. Since $s\subseteq (\omega_1,\dots,\omega_n )^{-1}(r)$, it follows that $(\omega_1(\alpha(x_1)),\dots,  \omega_n(\alpha(x_n)))\in r$.
\end{proof}

%%%%%%%%%%%%%%%%%%%%%%%%%%%%%%%%%%%%%%%%%%%%%%%%%%%%%%%

The Continuity Lemma concerns continuity of the maps $d_\alpha$. There is no (coD) assertion to be made about the maps $d_u$.

%%%%%%%%%%%%%%%%%%%%%%
\begin{lemma}[Continuity Lemma]\label{lem:PhiContinuous}
Let $\M$ and $\N$ be structures and let $\MT$ and $\NT$ be alter egos of $\M$ and $\N$, respectively. Define $\CA\coloneqq \ISP(\M)$ and $\CB \coloneqq \ISP(\N)$. Assume that $\M$ has a structural reduct $\M^\flat$ in $\CB $.
\begin{enumerate}[\normalfont (1)]

\item Let $\omega\in \CB(\M^\flat, \N)$. Then $\Phi_\omega^\A \colon \CA(\A, \M) \to \CB(\A^\flat, \N)$ is continuous for all $\A\in \CA$ if and only if $\omega$ is continuous with respect to the topologies on $\MT$ and~$\NT$.

\item Let $\Omega$ be a finite subset of $ \CB(\M^\flat, \N)$
with each $\omega\in \Omega$ continuous.  Let $\A\in \CA$, let $\alpha \colon \CA(\A, \M) \to M$  be a continuous map and let
\[
d_\alpha \colon \bigcupPhi \to N
\]
be a map such that $d_\alpha \circ \Phi_\omega^\A = \omega \circ \alpha$, for all $\omega\in \Omega$. Then $d_\alpha$ is continuous.
\end{enumerate}
\end{lemma}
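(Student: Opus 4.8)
The plan is to recognise $\Phi_\omega^\A$ as the restriction to $\CA(\A,\M)$ of the coordinatewise product map $\omega^A\colon \MT^A\to\NT^A$, since $\Phi_\omega^\A(x)(a)=\omega(x(a))$ for all $a\in A$; well-definedness as a map into $\CB(\A^\flat,\N)$ is a structure-level fact already in hand, so only continuity is at issue. For the forward direction of Part~(1), if $\omega$ is continuous then each coordinate $\pi_a\circ\omega^A=\omega\circ\pi_a$ is continuous, so $\omega^A$ is continuous by the universal property of the product topology on $\NT^A$; restricting to the subspace $\CA(\A,\M)$ and corestricting into the subspace $\CB(\A^\flat,\N)$ preserves continuity, and this holds for every $\A\in\CA$.

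For the reverse direction I would recover $\omega$ from a single well-chosen $\A$, namely the free structure $\mathbf F$ on one generator $g$ in $\CA$. As recorded earlier in the excerpt, the dual $\D{\mathbf F}=\CA(\mathbf F,\M)$ is isomorphic, hence homeomorphic, to $\MT$ via the evaluation $\mathrm{ev}_g\colon x\mapsto x(g)$. Composing with the continuous projection $\pi_g\colon\CB(\mathbf F^\flat,\N)\to N$ gives $\pi_g\circ\Phi_\omega^{\mathbf F}=\omega\circ\mathrm{ev}_g$, so if $\Phi_\omega^{\mathbf F}$ is assumed continuous then $\omega=(\pi_g\circ\Phi_\omega^{\mathbf F})\circ\mathrm{ev}_g^{-1}$ is continuous, as required.

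For Part~(2) the argument is topological and rests on compactness. Since $\CA(\A,\M)$ is a closed substructure of $\MT^A$, which is compact by Tychonoff's theorem, it is compact; and $\CB(\A^\flat,\N)$ is Hausdorff, being a subspace of $\NT^A$. By Part~(1) each $\Phi_\omega^\A$ is continuous, so each image $\Phi_\omega^\A(\CA(\A,\M))$ is compact, hence closed, and $\Phi_\omega^\A$ is a continuous surjection of a compact space onto a Hausdorff space, that is, a closed (so quotient) map onto its image. Because $d_\alpha\circ\Phi_\omega^\A=\omega\circ\alpha$ is continuous (both $\omega$ and $\alpha$ being continuous), the quotient property forces the restriction of $d_\alpha$ to $\Phi_\omega^\A(\CA(\A,\M))$ to be continuous. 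Finally, as $\Omega$ is finite, the domain $\bigcupPhi$ is a finite union of these closed pieces, on each of which $d_\alpha$ is continuous; the pasting lemma for a finite closed cover then delivers continuity of $d_\alpha$ on all of its domain.

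I expect the main obstacle to lie in the step passing from continuity of $d_\alpha\circ\Phi_\omega^\A$ to continuity of $d_\alpha$ on $\Phi_\omega^\A(\CA(\A,\M))$: this is exactly where compactness of the domain and Hausdorffness of the target are needed, to ensure $\Phi_\omega^\A$ is a quotient map. It is also worth flagging that the finiteness of $\Omega$ is indispensable at the very end, since an infinite union of closed sets need not be closed and the pasting lemma fails in that generality.
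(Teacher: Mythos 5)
Your proposal is correct. The forward direction of Part~(1) and all of Part~(2) are essentially the paper's arguments in light repackaging: the paper verifies continuity of $\Phi_\omega^\A$ by computing preimages of sub-basic open sets $(a;U)$, which is the same computation as your restriction of the coordinatewise product map $\omega^A$; and for Part~(2) the paper shows directly that $d_\alpha^{-1}(U) = \bigcup_{\omega\in\Omega}\Phi_\omega^\A(\alpha^{-1}(\omega^{-1}(U)))$ is closed for closed $U\subseteq N$, using exactly the ingredients you isolate (continuity of each $\Phi_\omega^\A$ from Part~(1), closedness of these maps via compactness of $\CA(\A,\M)$ inside $\MT^A$ and Hausdorffness of the codomain, continuity of $\omega\circ\alpha$, and finiteness of $\Omega$); your quotient-map-plus-pasting-lemma formulation is equivalent, and you correctly flag that finiteness of $\Omega$ is where an infinite union of closed sets would break things. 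Where you genuinely diverge is the reverse direction of Part~(1). The paper does \emph{not} invoke the isomorphism $\D{\mathbf F_1}\cong\MT$; instead it argues via graphs: continuity of $\Phi_\omega^{\mathbf F_1}$ makes $\graph(\Phi_\omega^{\mathbf F_1})$ closed in $\CA(\mathbf F_1,\M)\times\CB(\mathbf F_1^\flat,\N)$, the projection $\pi_{v_1}^M\times\pi_{v_1}^N$ is a closed map by compactness, so $\graph(\omega)$ is closed in $M\times N$, and a map with closed graph between compact Hausdorff spaces is continuous. Your route, factoring $\omega = (\pi_g\circ\Phi_\omega^{\mathbf F})\circ\mathrm{ev}_g^{-1}$ through the evaluation homeomorphism, is shorter and conceptually cleaner, but it leans on the fact recorded in Section~2 that the dual of the one-generated free structure is isomorphic \emph{in $\CX$} (hence homeomorphic) to $\MT$ via evaluation at the generator; the paper's closed-graph argument is more self-contained at the point-set level, needing only closedness of the graph and compactness rather than that the evaluation bijection is an isomorphism of topological structures. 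Both arguments use compactness and Hausdorffness in an essential way, and both are valid.
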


\begin{proof}
(1) Assume that $\omega$ is continuous and let $\A\in \CA$. Given sets $S$ and $T$, and  $s\in S$ and $U\subseteq T$, define $(s; U) \coloneqq  \{\, x\in T^S \mid x(s) \in U\,\}$. Thus a sub-basic open set in $\CB(\A^\flat, \N)$ is of the form $(a; U) \cap \CB(\A^\flat, \N)$, for some $a$ in $A$ and $U$ open in $N$.  We have
\begin{align*}
\bigl(\Phi_\omega^\A\bigr)^{-1}
((a; U) \cap \CB(\A^\flat, \N)) &= \{x\in \CA(\A,\M) \mid \Phi_\omega^\A(x)\in (a; U)\,\}\\
&= \{x\in \CA(\A,\M) \mid \omega(x(a))\in U\,\}\\
&= \{x\in \CA(\A,\M) \mid x(a)\in \omega^{-1}(U)\,\}\\
&= (a; \omega^{-1}(U)) \cap \CA(\A, \M),
\end{align*}
which is open in $\CA(\A, \M)$ as $\omega$ is continuous. Hence $\Phi_\omega^\A$ is continuous.

Now assume that $\Phi_\omega^\A$ is continuous for all $\A\in \CA$. In particular, $\Phi_\omega^{\mathbf F_1}$ is continuous, where $\mathbf F_1$ denotes the one-generated free algebra in $\CA$. Hence $\graph(\Phi_\omega^{\mathbf F_1})$ is closed in the product space $\CA(\mathbf F_1, \M) \times \CB(\mathbf F_1^\flat, \N)$. Let the free generator of $\mathbf F_1$ be $v_1$ and let $\pi_{v_1}^M \colon M^{F_1}
\to M$ and $\pi_{v_1}^N \colon N ^{F_1}\to N$ be the projections. Then
\begin{align*}
\graph(\omega) &= \{\, (a, \omega(a)) \mid a \in A\,\}
            = \{\, (x(v_1), \omega(x(v_1))\mid x\in \CA(\mathbf F_1, \M)\,\}\\
          &= (\pi_{v_1}^M\times \pi_{v_1}^N)(\graph(\Phi_\omega^{\mathbf F_1})).
\end{align*}
As the topologies on $M$ and $N$ are compact, the projection $\pi_{v_1}^M\times \pi_{v_1}^N$ is a closed map and consequently $\graph(\omega)$ is closed in $M\times N$. Since $\graph(\omega)$ is closed and the topologies on $M$ and $N$ are compact and Hausdorff, it follows that $\omega$ is continuous.

(2) By (1), $\Phi_\omega^\A$ is continuous and so is a closed map, for all $\omega\in\Omega$. Let $U$ be a closed subset of $D$. A simple calculation shows that
\[
d_\alpha^{-1}(U) = \bigcup_{\omega\in \Omega} \Phi_\omega^\A(\alpha^{-1}(\omega^{-1}(U))),
\]
which is closed in $\bigcup\{\, \Phi_\omega^\A( \CA(\A, \M))\mid \omega\in \Omega\,\}$ since $\alpha$ is continuous, each $\omega$ is continuous, each
$\Phi_\omega^\A$ is a closed map, and $\Omega$ is finite. Hence, $d_\alpha$ is continuous.
\end{proof}
%%%%%%%%%%%%%%%%%%%%%%%%%%%%%%%%%%%%%%%%%%%%%%%%%%%%

Up to this point our lemmas have not reflected the dichotomy in Conditions~(2)
in our duality and co-duality theorems.
Now we address this. The Morphism Lemma~\ref{lem:PhiMorphism} and the Operation Preservation Lemma~\ref{lem:pres3} below will be  needed to handle two scenarios.
\begin{Dlist}
\item[(D)]  
In Theorem~\ref{thm:pig-general}, alternative (ii) under Assumption~(2) when $\NT$ is not purely relational,  and

\item[(coD)]  
In Theorem~\ref{thm:copig-general}, alternative (ii) under Assumption (2) when $\N$ is not purely relational,
\end{Dlist}
and likewise when
the corresponding conditions in the single-carrier versions of the theorems hold.

We have already noted that  the Morphism Lemma does not have separate (D) and (coD) claims.
In it we require a carrier map $\omega$ which acts both as a  $\CB$-morphism and as a $\CY$-morphism. In a (D) assertion, $\omega$ is usually a $\CB$-morphism, as this ensures that the associated map $\Phi_\omega^\A \colon \CA(\A,\M) \to \CB(\A^\flat, \N)$ is well defined. But in  order to assert that $\Phi_\omega^{\A} $ is a $\CY$-morphism we shall need $\omega $ to preserve the operations and relations not of $\N$ but of $\NT$.

Recall that, if $\M$ and $\MT$ have structural reducts $\M^\flat$ in $\CB$ and $\MT^\flat$ in $\CY$, respectively, then we have corresponding forgetful functors ${}^\flat \colon \CA \to \CB$ and ${}^\flat\colon \CX \to \CY$.

\begin{lemma}[Morphism Lemma]\label{lem:PhiMorphism}
Let $\M$ and $\N$ be structures and let $\MT$ and $\NT$ be alter egos of $\M$ and $\N$, respectively. Define $\CA\coloneqq \ISP(\M)$, $\CB\coloneqq \ISP(\N)$, $\CX\coloneqq \IScP(\MT)$ and $\CY \coloneqq  \IScP(\NT)$.
Assume that $\M$ and $\MT$ have structural reducts $\M^\flat$ in $\CB$ and $\MT^\flat$ in $\CY$, respectively.
Let $\omega \in\CB(\M^\flat, \N) \cap \CY(\MT^\flat, \NT)$.
Let $\A \in \CA$ and $\X \in \CX$. Then the map
$\Phi_\omega^\A \colon \CA(\A, \M)^\flat \to \CB(\A^\flat, \N)$
is a $\CY$-morphism and the map $\Psi_\omega^\X \colon \CX(\X, \MT)^\flat \to \CY(\X^\flat, \NT)$ is a $\CB$-morphism.
\end{lemma}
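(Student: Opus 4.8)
The plan is to recognise each of the two maps as the restriction of a power of $\omega$ and then to read off the required preservation properties coordinatewise. Consider the $\CY$-claim first. Both the source $\CA(\A,\M)^\flat$ and the target $\CB(\A^\flat,\N)$ are objects of $\CY$ carrying the type of $\NT$: the target is a closed substructure of $\NT^A$, while $\CA(\A,\M)$ is a closed substructure of $\MT^A$, so that $\CA(\A,\M)^\flat$ is a closed substructure of $(\MT^A)^\flat = (\MT^\flat)^A$ (the structural reduct commutes with powers because the defining conjunct-atomic formulas and terms act coordinatewise). In either object the operations and relations in the type of $\NT$, together with the topology, are inherited pointwise---from $\NT$ in the first case and from $\MT^\flat$ in the second. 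Since $\Phi_\omega^\A(x) = \omega \circ x$, the map $\Phi_\omega^\A$ is precisely the restriction to $\CA(\A,\M)^\flat$, and corestriction to $\CB(\A^\flat,\N)$, of the $A$-th power map $\omega^A \colon f \mapsto \omega \circ f$ taking $(\MT^\flat)^A$ to $\NT^A$.

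First I would check that $\omega^A$ preserves the relevant structure and topology. Because $\omega \in \CY(\MT^\flat,\NT)$, the map $\omega$ is continuous and preserves every operation and relation in the type of $\NT$; since all structure on the two powers is lifted coordinatewise, $\omega^A$ preserves it coordinatewise, and $\omega^A$ is continuous for the product topologies because each coordinate map is $\omega\circ\pi_a$. A morphism restricted to a substructure of its domain and corestricted to any substructure of its codomain that contains the image is again a morphism, so $\Phi_\omega^\A$ is a $\CY$-morphism. (Its continuity may equally be quoted from the Continuity Lemma~\ref{lem:PhiContinuous}(1), since $\omega$ is continuous.) If one prefers to unpackage this, the same conclusion is the coordinatewise identity $\Phi_\omega^\A(g^{\CA(\A,\M)^\flat}(x_1,\dots,x_k))(a) = \omega\bigl(g^{\MT^\flat}(x_1(a),\dots,x_k(a))\bigr) = g^{\NT}\bigl(\omega(x_1(a)),\dots,\omega(x_k(a))\bigr)$ for each operation $g$ in the type of $\NT$, together with the analogous implication for each relation; both use only that $\omega$ preserves $g$, respectively the relation.

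The $\CB$-claim for $\Psi_\omega^\X$ is established in exactly the same way, \emph{mutatis mutandis}: here $\Psi_\omega^\X(\alpha) = \omega \circ \alpha$ is the restriction and corestriction of the power map $\omega^X \colon (\M^\flat)^X \to \N^X$, and the hypothesis now invoked is $\omega \in \CB(\M^\flat,\N)$, which supplies preservation of the operations and relations in the type of $\N$. Here no topological condition enters, since morphisms of $\CB = \ISP(\N)$ are required only to preserve structure and not to be continuous.

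The content is almost entirely bookkeeping, and the single point demanding care---which is exactly why the lemma requires $\omega$ to lie in the \emph{intersection} $\CB(\M^\flat,\N) \cap \CY(\MT^\flat,\NT)$---is that the two claims draw on the two different memberships of $\omega$: the $\CY$-claim needs $\omega$ to preserve the $\NT$-structure and to be continuous, whereas the $\CB$-claim needs $\omega$ to preserve the $\N$-structure but nothing topological. This asymmetry in the role of topology is the crux, and it is what lets the single carrier $\omega$ serve simultaneously on the duality and the co-duality sides.
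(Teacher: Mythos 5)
Your proposal is correct and takes essentially the same route as the paper: both proofs come down to the pointwise verification that composing with $\omega$ preserves the coordinatewise-lifted structure, your framing via the power map $\omega^A \colon (\MT^\flat)^A \to \NT^A$ (restricted and corestricted) being a repackaging of the paper's direct computation. The paper differs only cosmetically---it invokes symmetry to prove just the (stronger, topological) $\Phi_\omega^\A$ claim, quotes the Continuity Lemma~\ref{lem:PhiContinuous} for continuity, and reduces operations (including partial ones, which your write-up does not mention explicitly) to relations via their graphs so that a single relational computation suffices.
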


\begin{proof}
By symmetry, it is sufficient to prove the stronger statement, namely that $\Phi_\omega^\A$ is a $\CY$-morphism.
We note that $\Phi_\omega^\A$ is continuous, by Lemma~\ref{lem:PhiContinuous}. Since a map preserves an operation or partial operation $g$ if and only if it preserves $\graph(g)$, to prove that $\Phi_\omega^\A$ is a $\CY$-morphism it suffices to prove that $\Phi_\omega^\A$ preserves every relation preserved by $\omega$.
Let $r$ be an $n$-ary relation symbol (not necessarily in the type of $\MT$, nor of $\NT$), let $r^M$ and $r^N$ be interpretations of $r$ on $M$ and $N$, respectively, and assume that $\omega$ preserves~$r$.
Since $r^M$ and $r^N$ are extended pointwise to $\CA(\A, \M)$ and $\CB(\A^\flat, \N)$, respectively, we have, for all $x_1, \dots, x_n\in \CA(\A, \M)$,
\begin{align*}
(x_1, \dots, x_n)\in r^{\CA(\A, \M)} &\implies (\forall a\in A)\  (x_1(a), \dots, x_n(a))\in r^M\\
& \implies (\forall a \in A) \ (\omega(x_1(a)), \dots, \omega(x_n(a)))\in r^N\\
&  \implies (\omega\circ x_1, \dots, \omega\circ x_n)\in r^{\CB(\A^\flat, \N)} \\
&  \implies (\Phi_\omega^\A(x_1), \dots, \Phi_\omega^\A(x_n))\in r^{\CB(\A^\flat, \N)}.
\end{align*}
Hence $\Phi_\omega^\A$ preserves $r$.
\end{proof}

\begin{lemma}[Operation Preservation Lemma] \label{lem:pres3}
Let $\M$ and $\N$ be structures and let $\MT$ and $\NT$ be alter egos of $\M$ and $\N$, respectively. Define $\CA\coloneqq \ISP(\M)$, $\CB \coloneqq \ISP(\N)$, $\CX\coloneqq \IScP(\MT)$ and $\CY \coloneqq  \IScP(\NT)$. Assume that $\M$ and $\MT$ have  structural reducts $\M^\flat$ in $\CB$ and $\MT^\flat$ in $\CY$, respectively, and let $\Omega \subseteq \CB(\M^\flat,\N)\cap \CY(\MT^\flat,\NT)$.

\begin{Dlist}
\item[\normalfont(D)]
Assume that $\NT = \langle N; G^\nu, R^\nu, \Tp\rangle$ is a total structure, let $\A\in \CA$ and let $\alpha \colon \CA(\A, \M) \to \MT$  be an $\CX$-morphism.  Assume that the family of maps $\Phi_\omega^\A \colon \CA(\A, \M) \to \CB(\A^\flat, \N)$, for $\omega\in \Omega$,  is jointly surjective and that $d_\alpha \colon \CB(\A^\flat, \NT) \to N$ is a map such that $d_\alpha \circ \Phi_\omega^\A = \omega \circ \alpha$, for all $\omega \in \Omega$.
If $|\Omega| = 1$ or every operation in $G^\nu$ is unary or nullary, then $d_\alpha$ preserves the operations in $G^\nu$.

\item[\normalfont(coD)] 
Assume that $\N = \langle N; G^\nu, R^\nu\rangle$ is a total structure, let $\X\in \CX$ and let $u \colon \CX(\X, \MT) \to \M$  be an $\CA$-morphism.  Assume that
 the family of maps $\Psi_\omega^\X \colon \CX(\X, \MT) \to \CY(\X^\flat, \NT)$, for $\omega\in \Omega$, is jointly surjective and that $d_u \colon \CY(\X^\flat, \NT) \to N$  is a map such that $d_u\circ \Psi_\omega^\X = \omega \circ u$, for all $\omega\in \Omega$. If $|\Omega| = 1$ or every operation in $G^\nu$ is unary or nullary, then $d_u$ preserves the operations in $G^\nu$.\end{Dlist}
\end{lemma}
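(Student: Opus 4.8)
The plan is to establish the (D) assertion in full and to obtain (coD) by the symmetry between the two strands already invoked throughout this section: swapping the roles of $\M,\MT$ and of $\N,\NT$ turns $\CA,\CB,\Phi_\omega^\A,\alpha$ into $\CX,\CY,\Psi_\omega^\X,u$, and the hypothesis ``$\NT$ total'' into ``$\N$ total''. Since preservation of operations is a purely algebraic matter, no topological input is required for either part. So fix $\A\in\CA$, the $\CX$-morphism $\alpha$, and the map $d_\alpha$ as in the hypotheses; by the assumed joint surjectivity the domain $\CB(\A^\flat,\NT)$ of $d_\alpha$ is all of $\CB(\A^\flat,\N)$, carrying the operations of $\NT$ pointwise. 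Fix $g\in G^\nu$, of arity $k$ say, and elements $z_1,\dots,z_k$ of $\CB(\A^\flat,\N)$; the goal is to verify $d_\alpha(g(z_1,\dots,z_k))=g(d_\alpha(z_1),\dots,d_\alpha(z_k))$.

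The heart of the argument is to realise $g(z_1,\dots,z_k)$ as the image under a \emph{single} $\Phi_\omega^\A$. Using joint surjectivity I would write each $z_i=\Phi_{\omega_i}^\A(x_i)=\omega_i\circ x_i$ with $\omega_i\in\Omega$ and $x_i\in\CA(\A,\M)$. This is exactly where the dichotomy of the hypothesis is used: if $\Omega=\{\omega\}$ then all the $\omega_i$ coincide automatically, while if $g$ is unary or nullary there is at most one argument and hence at most one carrier in play. Either way I may take a common carrier $\omega$ and form the element $x\in\CA(\A,\M)$ given pointwise by $x(a)=g(x_1(a),\dots,x_k(a))$; this lies in $\CA(\A,\M)$ because $\flat$ is a functor into $\CY$, so $\CA(\A,\M)^\flat$ is closed under $g$ and the operation acts coordinatewise. (For nullary $g$, read $x$ as the value of the corresponding nullary term function, and note that any $\omega\in\Omega$ will serve.)

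The computation is then a short chain of three preservation facts. Since $\omega\in\CY(\MT^\flat,\NT)$ preserves $g$, applying $\omega$ coordinatewise gives $g(z_1,\dots,z_k)=\omega\circ x=\Phi_\omega^\A(x)$, so the element to be evaluated does lie in the image of one $\Phi_\omega^\A$. The defining relation $d_\alpha\circ\Phi_\omega^\A=\omega\circ\alpha$ then yields $d_\alpha(g(z_1,\dots,z_k))=\omega(\alpha(x))$. Finally, $\alpha$ is an $\CX$-morphism and so preserves $g$ (equivalently $\alpha^\flat$ is a $\CY$-morphism), and $\omega$ preserves $g$ once more; hence $\omega(\alpha(x))=\omega\bigl(g(\alpha(x_1),\dots,\alpha(x_k))\bigr)=g\bigl(\omega(\alpha(x_1)),\dots,\omega(\alpha(x_k))\bigr)$, and each factor $\omega(\alpha(x_i))$ equals $d_\alpha(\Phi_\omega^\A(x_i))=d_\alpha(z_i)$, as required.

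I expect the one genuine obstacle to be the step that forces the dichotomy. The equation $d_\alpha\circ\Phi_\omega^\A=\omega\circ\alpha$ pins down $d_\alpha$ only on the image of a single $\Phi_\omega^\A$ at a time, so to evaluate $d_\alpha$ on $g(z_1,\dots,z_k)$ I am obliged to exhibit that element as $\Phi_\omega^\A(x)$ for one fixed $\omega$. When $|\Omega|>1$ and $g$ has arity at least two, the arguments $z_i$ may genuinely demand distinct carriers $\omega_i$, and there is then no reason for $g(z_1,\dots,z_k)$ to lie in the image of any single $\Phi_\omega^\A$; the restriction to a single carrier, or to unary and nullary operations, is precisely what removes this difficulty.
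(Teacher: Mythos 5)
Your proof is correct and takes essentially the same route as the paper's: what you compute elementwise (realising $g(z_1,\dots,z_k)$ as $\Phi_\omega^\A(x)$ for a common carrier and chasing preservation through $\omega$, $\alpha$, $\omega$) is exactly the paper's combination of the Morphism Lemma (each $\Phi_\omega^\A$ is a $\CY$-morphism) with the surjective-factoring fact applied on each subuniverse $Y_\omega=\Phi_\omega^\A(\CA(\A,\M))$. Your use of the single-carrier/unary-or-nullary dichotomy to force the arguments into the image of a single $\Phi_\omega^\A$ matches the paper's concluding step precisely.
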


\begin{proof}
We shall prove (D). As $\MT^\flat$ is a structural reduct of $\MT$, the map $\alpha \colon \CA(\A, \M)^\flat \to \MT^\flat$  is a $\CY$-morphism and so preserves the operations in~$G^\nu$. Under the given assumptions Lemma~\ref{lem:PhiMorphism} applies, and
tells us that the maps $\Phi_\omega^\A$ are $\CY$-morphisms,  and so they also preserve the operations in~$G^\nu$. Fix $\omega\in \Omega$. As $\NT$ is a total structure, $Y_\omega \coloneqq  \Phi_\omega^\A(\CA(\A, \M))$ is a substructure of $\CB(\A^\flat, \N)$. Since $d_\alpha \circ \Phi_\omega^\A = \omega \circ \alpha$ and the maps $\Phi_\omega^\A$, $\alpha$~and $\omega$ preserve the operations in~$G^\nu$, it follows easily that $d_\alpha\rest{Y_\omega}\colon Y_\omega \to N$ preserves the operations in $G^\nu$---use the fact that if $\A$, $\B$ and $\C$ are algebras and $u\colon \A\to \B$ and $w\colon \A\to \C$ are  homomorphisms with $u$ surjective, then any map $v\colon B\to C$ satisfying $v \circ u = w$ is necessarily a homomorphism.

The joint-surjectivity assumption tells us that $\CB(\A^\flat, \N) = \bigcup_{\omega\in \Omega}Y_\omega$. If $\Omega = \{\omega\}$, then $\CB(\A^\flat, \N) = Y_\omega$, and hence $d_\alpha = d_\alpha\rest{Y_\omega}$ preserves the operations in~$G^\nu$. If every operation in $G^\nu$ is unary or nullary, then $d_\alpha$ preserves the operations in $G^\nu$ since its restriction to each of the subuniverses $Y_\omega$ does.
\end{proof}

%%%%%%%%%%%%%%%%%%%%%%%%%%%%%%%%%%%%%%%%%%%%%%%%%%%%%%%%

We now work towards sufficient conditions for joint surjectivity.  We
prepare the way by establishing two lemmas concerning substructures.  They are
very different in the assumptions required to arrive at the desired conclusions. %%%%%%%%%%%%%%%%%%%%%%%%%%%%%%%%

\begin{lemma}[Substructure Lemma] \label{lem:everything1}
Let $\M$ and $\N$ be structures and let $\MT$ and $\NT$ be alter egos of $\M$ and $\N$, respectively. Define $\CA\coloneqq \ISP(\M)$, $\CB\coloneqq \ISP(\N)$, $\CX\coloneqq \IScP(\MT)$ and $\CY \coloneqq  \IScP(\NT)$.

\begin{Dlist}
\item[\normalfont (D)]
Assume that $\M$ has a structural reduct $\M^\flat$ in $\CB$ and let $\Omega$ be a finite subset of $ \CB(\M^\flat, \N)$. Assume that one of the following conditions holds:
\begin{enumerate}

\item[\normalfont (i)]  $\NT =\langle N; R^\nu, \Tp\rangle$ is purely relational
and each $\omega\in\Omega$ is continuous;

\item[\normalfont  (ii)]  $\NT =\langle N; G^\nu, R^\nu, \Tp\rangle$
is a total structure, $\MT$ has a structural reduct $\MT^\flat$ in~$\CY$,
each $\omega\in \Omega$ belongs to $\CY(\MT^\flat, \NT)$, and
either $|\Omega| = 1$ or every operation in $G^\nu$ is unary or nullary.

\end{enumerate}
Then, for all $\A\in \CA$,  the joint image of the family $\{\Phi_\omega^\A\}_{\omega\in \Omega}$ is a closed substructure of $\NT^A$, and therefore of $\CB(\A^\flat, \N)$.

\item[\normalfont (coD)]
Assume that $\MT$ has a structural reduct $\MT^\flat$ in $\CY$ and let $\Omega$ be a subset of $ \CY(\MT^\flat, \NT)$ and assume  that one of the following conditions holds:
\begin{enumerate}
\item[\normalfont (i)] $\N =\langle N; R^\nu\rangle$ is purely relational;

\item[\normalfont (ii)]  $\N =\langle N; G^\nu, R^\nu\rangle$
is a total structure, $\M$ has a structural reduct $\M^\flat$ in~$\CB$, each $\omega\in \Omega$ belongs to $\CB(\M^\flat, \N)$, and either $|\Omega| = 1$ or every operation in $G^\nu$ is unary or nullary.
\end{enumerate}
 Then,  for all $\X\in \CX$, the joint image of the family $\{\Psi_\omega^\X\}_{\omega\in \Omega}$ is a substructure of $\N^X$, and therefore of $\CY(\X^\flat, \NT)$.
\end{Dlist}
\end{lemma}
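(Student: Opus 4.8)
The plan is to prove the \textup{(D)} assertion in full and to obtain the structural content of \textup{(coD)} from it by the symmetry already invoked in our proof strategy, noting that \textup{(coD)} carries no topological obligation since $\N$ has no topology. Throughout I write $Y_\omega \coloneqq \Phi_\omega^\A(\CA(\A, \M))$, so that the joint image of $\{\Phi_\omega^\A\}_{\omega \in \Omega}$ is $\bigcup_{\omega \in \Omega} Y_\omega$; each $Y_\omega$ lies inside $\CB(\A^\flat, \N)$, which is itself a closed substructure of $\NT^A$. It therefore suffices to show that $\bigcup_{\omega} Y_\omega$ is a closed substructure of $\NT^A$, since any such substructure contained in $\CB(\A^\flat, \N)$ is automatically a closed substructure of $\CB(\A^\flat, \N)$.

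First I would dispose of case \textup{(i)}. As $\NT = \langle N; R^\nu, \Tp\rangle$ is purely relational, $\NT^A$ has no operations, so \emph{every} subset of $\NT^A$ forms a substructure; in particular $\bigcup_{\omega} Y_\omega$ does. For closedness I would use that each $\omega$ is continuous, so Lemma~\ref{lem:PhiContinuous}(1) makes each $\Phi_\omega^\A$ continuous; since $\CA(\A, \M)$ is a closed substructure of the compact Hausdorff space $\MT^A$ it is compact, whence each $Y_\omega$ is compact and so closed in the Hausdorff space $\NT^A$. Finiteness of $\Omega$ then makes the union closed.

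Case \textup{(ii)} is where the operations in $G^\nu$ must be handled, and here the Morphism Lemma does the work. Since $\Omega \subseteq \CB(\M^\flat, \N)$ and each $\omega \in \CY(\MT^\flat, \NT)$, each carrier lies in $\CB(\M^\flat, \N) \cap \CY(\MT^\flat, \NT)$, so Lemma~\ref{lem:PhiMorphism} applies and each $\Phi_\omega^\A$ is a $\CY$-morphism. As $\NT$ is total, the homomorphic image $Y_\omega$ is then a substructure of $\CB(\A^\flat, \N)$, and continuity together with the compactness of $\CA(\A, \M)$ makes it closed. When $|\Omega| = 1$ the joint image is this single $Y_\omega$ and we are done. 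Otherwise every operation in $G^\nu$ is unary or nullary: each $Y_\omega$ contains the value of every nullary operation of $\NT^A$, and each $Y_\omega$ is closed under the unary operations, so the union $\bigcup_{\omega} Y_\omega$ is closed under all the operations and hence is a substructure; as a finite union of closed sets it is also closed.

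The one genuinely delicate point---and the reason for the unary-or-nullary hypothesis in the multi-carrier case---is that a union of substructures is \emph{not} in general a substructure once a binary (or higher-arity) operation $g$ is present: for $y_1 \in Y_{\omega_1}$ and $y_2 \in Y_{\omega_2}$ with $\omega_1 \neq \omega_2$ there is no reason for $g(y_1, y_2)$ to lie in any single $Y_\omega$. Restricting to unary and nullary operations removes this obstruction, since closure under such operations is inherited by arbitrary unions. The structural argument then transfers \emph{verbatim} to \textup{(coD)} upon replacing $\Phi_\omega^\A$, $\CY$ and $\NT^A$ by $\Psi_\omega^\X$, $\CB$ and $\N^X$ respectively---using the other half of Lemma~\ref{lem:PhiMorphism}---with no closedness to verify; here $\Omega$ need not be finite, as unary-or-nullary closure survives infinite unions.
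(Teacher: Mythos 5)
Your proposal is correct and follows essentially the same route as the paper's own proof: case (i) is trivial structurally with closedness from Lemma~\ref{lem:PhiContinuous} and finiteness of $\Omega$, case (ii) uses the Morphism Lemma~\ref{lem:PhiMorphism} plus totality of $\NT$, with the singleton and unary-or-nullary subcases split exactly as in the paper, and (coD) obtained by dropping the topology. Your only additions---spelling out that closedness of each image comes from compactness of $\CA(\A,\M)$ inside $\MT^A$, and flagging why binary operations would break the multi-carrier union---are details the paper leaves tacit, not a different argument.
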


\begin{proof}
Consider (D).
Define $Z$ to be the joint image of the family $\{\Phi_\omega^\A\}_{\omega\in\Omega}$. If  assumption (i) holds then it is trivial that $Z$ is a substructure of $\CB (\A^\flat, \N)$ as $G^\nu = \varnothing$. By Lemma~\ref{lem:PhiContinuous}, each $\Phi_\omega^\A$ is continuous since $\omega$ is. Thus $Z$ is topologically closed as~$\Omega$ is finite. Now assume that (ii) holds. By Lemma~\ref{lem:PhiMorphism}, each $\Phi_\omega^\A$ is a $\CY$-morphism. Hence, as $\NT$ is a total structure, the image of each $\Phi_\omega^\A$ is a topologically closed substructure of $\CB (\A^\flat, \N)$. If $\Omega = \{\omega\}$, then $Z$ is simply the image of $\Phi_\omega^\A$ and so is a closed substructure of $\CB (\A^\flat, \N)$. If  every operation in
 $G^\nu$ is unary or nullary, then $Z$ is a substructure since, in this case, a union of substructures is again a substructure. The set $Z$ is topologically closed since~$\Omega$ is finite and each $\Phi_\omega^\A$ is continuous. Hence (D)  follows. The proof of (coD) is almost identical, minus the topological arguments.
\end{proof}

The proof of the next  lemma, in which we need to assume that $\NT$ fully dualises~$\N$, is similar to the proof in the case that $\M$ is a total algebra given by Davey and Priestley~\cite[Proposition~1.11]{DP87}; we give the details for completeness.

\begin{lemma}[Density Lemma] \label{lem:SepImpliesDense}
Let $\N$ be a structure, let $\NT$ be a fully dualising alter ego of $\N$,
define $\CB = \ISP(\N)$ and $\CY \coloneqq  \IScP(\NT)$ and let $\langle \mathrm H, \mathrm K, k, \kappa\rangle$ be the associated dual equivalence between $\CB$ and~$\CY$. Let $\B\in \CB$ and $\Y\in \CY$.

\begin{Dlist}
\item[\normalfont (D)]
Assume that $\NT$ is injective in $\CY$. If $Z$ is a subset of  $\CB(\B, \N)$ that separates the structure $\B$, then the topologically closed substructure of $\CB(\B, \N)\lee \NT^B$ generated by $Z$ is $\CB(\B, \N)$ itself.

\item[\normalfont (coD)]
Assume that $\N$ is injective in $\CB$. If $C$ is a subset of  $\CY(\Y, \NT)$ that separates the structure $\Y$, then the substructure of $\CY(\Y, \NT)\le \N^Y$ generated by $C$ is $\CY(\Y, \NT)$ itself.
\end{Dlist}

\end{lemma}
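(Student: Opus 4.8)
The plan is to prove the (D) statement in full; the (coD) statement then follows by the symmetry between the two sides that has already been flagged for these paired lemmas (interchange the roles of $\CB,\N$ with those of $\CY,\NT$, and replace ``closed substructure'' by ``substructure'', using injectivity of $\N$ in $\CB$ in place of injectivity of $\NT$ in $\CY$). The argument is modelled on Davey and Priestley's treatment of the total-algebra case in~\cite[Proposition~1.11]{DP87}.

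First I would set $W$ to be the topologically closed substructure of $\mathrm{H}(\B)=\CB(\B,\N)$ generated by $Z$, and let $j\colon W\to \mathrm{H}(\B)$ be the inclusion, which is an embedding in $\CY$. Applying the contravariant hom-functor $\mathrm{K}=\CY(-,\NT)$ and invoking injectivity of $\NT$ in $\CY$, every $\CY$-morphism $W\to\NT$ extends along the embedding $j$, so $\mathrm{K}(j)\colon \mathrm{KH}(\B)\to \mathrm{K}(W)$ is surjective. I would then compose with the unit isomorphism to form $g\coloneqq \mathrm{K}(j)\circ k_\B\colon \B\to \mathrm{K}(W)$; unwinding definitions, $g(b)$ is the restriction to $W$ of the evaluation $x\mapsto x(b)$, so that $g(b)(w)=w(b)$ for all $w\in W$. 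Since $k_\B$ is an isomorphism and $\mathrm{K}(j)$ is surjective, $g$ is surjective.

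The heart of the proof is to upgrade $g$ to an isomorphism, and here the full force of ``$Z$ separates the structure $\B$'' is used. Condition~(2)(i) yields injectivity: if $g(b_1)=g(b_2)$ then $z(b_1)=z(b_2)$ for every $z\in Z\subseteq W$, forcing $b_1=b_2$. Condition~(2)(ii) yields that $g$ \emph{reflects} relations: for an $n$-ary relation $r$ in $\dom(H^\B)\cup R^\B$, if $(g(a_1),\dots,g(a_n))$ lies in the pointwise relation $r^{\mathrm{K}(W)}$, then $(z(a_1),\dots,z(a_n))\in r^{\N}$ for all $z\in Z$, and the separation property forces $(a_1,\dots,a_n)\in r^{\B}$. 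Thus $g$ is a surjective embedding, hence an isomorphism, and therefore so is $\mathrm{K}(j)=g\circ k_\B^{-1}$. Finally, since $\NT$ fully dualises $\N$, the quadruple $\langle\mathrm{H},\mathrm{K},k,\kappa\rangle$ is a dual equivalence and $\mathrm{K}$ reflects isomorphisms (concretely, naturality of $\kappa$ together with $\kappa_{\mathrm{H}(\B)}$ and $\kappa_{W}$ being isomorphisms shows that $\mathrm{K}(j)$ iso forces $j$ iso). As $j$ is the inclusion of $W$ into $\mathrm{H}(\B)$, I conclude $W=\CB(\B,\N)$, as required.

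The step I expect to be the main obstacle is precisely the passage from mere injectivity of $g$ to the full embedding property in the third paragraph: for structures carrying genuine relations or partial operations a bijective $\CB$-morphism need not reflect the relations, and so need not be an isomorphism. It is exactly at this point that separation of the \emph{structure}, rather than separation of points alone, is indispensable; when $\N$ is a total algebra the two notions coincide and the argument collapses to the classical one.
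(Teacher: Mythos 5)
Your proof is correct and follows essentially the same route as the paper's: both take the closed substructure generated by $Z$ with its inclusion map, use injectivity of $\NT$ in $\CY$ to make $\mathrm K$ of the inclusion surjective, use separation of the structure $\B$ (via the unit $k_{\B}$) to make it an embedding, and then invoke the full duality to reflect the resulting isomorphism back to the inclusion itself. Your only deviation---verifying the embedding property for the composite $g=\mathrm K(j)\circ k_{\B}$ rather than pulling elements of $\mathrm{KH}(\B)$ back along the isomorphism $k_{\B}$ as the paper does---is purely cosmetic, and your emphasis that separation of the \emph{structure} (not merely of points) is what makes the bijective morphism an isomorphism matches the paper exactly.
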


\begin{proof} We shall prove (D). The proof of (coD)  is a simple modification obtained by replacing the closed substructure of $\mathrm H(\B)$ generated by $Z$ by the substructure of $\mathrm K(\Y)$ generated by $C$. Assume that $Z$ is a subset of $\mathrm H(\B)=\CB(\B, \N)$ that separates the structure $\B$. Define $\Y$ to be the closed substructure of $\mathrm H(\B)$ generated by $Z$. Let $\mu \colon \Y \to \mathrm H(\B)$ be the inclusion map and consider $\mathrm K(\mu) \colon \mathrm{KH}(\B) \to \mathrm K(\Y)$. We claim that $\mathrm K(\mu)$ is an embedding. Assume that $\N =\langle N; G^\nu, H^\nu, R^\nu\rangle$. Let $r$ be an $n$-ary relation in $\dom(H^\nu)\cup R^\nu\cup \{\Delta_N\}$ and let $\alpha_1, \dots, \alpha_n \in \mathrm{KH}(\B)$ with $(\alpha_1, \dots, \alpha_n) \notin r^{\mathrm{KH}(\B)}$. Since $\NT$ dualises~$\N$, there exist
$b_1, \dots, b_n \in B$ with $(b_1, \dots, b_n) \notin r^\B$ and $\alpha_i = k_\B(b_i)$, for all $i\in \{1, \dots, n\}$. Since $Z$ separates the structure $\B$, there exists $z\in Z\subseteq Y$ with $(z(b_1), \dots, z(b_n)) \notin r^{\N}$. Hence
\[
(\alpha_1(z), \dots, \alpha_n(z)) = (k_\B (b_1)(z), \dots, k_\B (b_n)(z)) = (z(b_1), \dots, z(b_n)) \notin r^{\N}.
\]
Thus $(\alpha_1\rest Y, \dots, \alpha_n\rest Y) \notin r^{\mathrm K(\Y)}$, and consequently $\mathrm K(\mu)$ is an embedding. As $\mu$ is an embedding and $\NT$ is injective in $\CY$, the map $\mathrm K(\mu)$ is
surjective.
Thus $\mathrm K(\mu)$ is an isomorphism. As $\NT$ fully dualises $\N$, it follows that $\mu$ is an isomorphism, whence $\Y = \mathrm H(\B)$.
\end{proof}

%%%%%%%%%%%%
In Lemma~\ref{lem:everything2} we shall require $\Omega\circ \End(\M)$ to separate the structure $\M^\flat$ and $\Omega\circ \End(\MT)$ to separate the structure
$\MT^\flat$. In the earlier lemmas, by contrast, we  required that the possibly smaller sets $\Omega\circ \Clo_1(\MT)$ and $\Omega\circ \Clo_1(\M)$ separate  the points of~$M$.

%%%%%%%%%%%%%%%%%%%%%%%%%%%%%%%%%%%%%%%%%%%%%%%%%
\begin{lemma}[Joint Surjectivity Lemma]
\label{lem:everything2}
Let $\M$ and $\N$ be structures and let $\MT$ and $\NT$ be alter egos of $\M$ and $\N$, respectively. Define $\CA\coloneqq \ISP(\M)$, $\CB \coloneqq \ISP(\N)$, $\CX\coloneqq \IScP(\MT)$ and $\CY \coloneqq  \IScP(\NT)$.

\begin{Dlist}
\item[\normalfont (D)] Assume that $\M$ has a structural reduct $\M^\flat$ in $\CB$ and let $\Omega$ be a subset of $\CB(\M^\flat,\N)$.

\begin{enumerate}[\normalfont(1)]
\item
\begin{rmlist}
\item[\normalfont(i)] 
If the family $\{\Phi_\omega^\M\}_{\omega\in \Omega}$ is jointly surjective, then $\Omega\circ \End(\M)$ separates the structure $\M^\flat$.

\item[\normalfont(ii)] 
Assume that $\Omega\circ \End(\M)$ separates the structure~$\M^\flat$. Then, for each $\A\in \CA$, the joint image of the family $\{\Phi_\omega^\A \}_{\omega\in \Omega}$ separates the structure~$\A^\flat$.
\end{rmlist}

\item
Assume that $\Omega$ is finite, $\NT$ fully dualises $\N$  with $\NT$ injective in $\CY$, and  that one of the following conditions holds:
\begin{rmlist}
\item[\normalfont (i)]  
$\NT =\langle N; R^\nu, \Tp\rangle$
 is purely relational and each $\omega\in\Omega$ is continuous;

\item[\normalfont  (ii)]  
$\NT =\langle N; G^\nu, R^\nu, \Tp\rangle$ is a total structure, $\MT$ has a structural reduct $\MT^\flat$ in~$\CY$, each $\omega\in \Omega$ belongs to $\CY(\MT^\flat, \NT)$, and either $|\Omega| = 1$ or  every operation in $G^\nu$ is unary or nullary.
\end{rmlist}
 Assume that $\Omega \circ \End \M$ separates the structure $\M^\flat$.
Then, for each $\A \in \CA$, the family $\{\Phi_\omega^\A\}_{\omega\in \Omega}$
is jointly surjective.
\end{enumerate}

\item[\normalfont (coD)] Assume that $\MT$ has a structural reduct $\MT^\flat$ in $\CY$ and let $\Omega$ be a subset of $ \CY(\MT^\flat, \NT)$.

\begin{enumerate}[\normalfont(1)]

\item
\begin{rmlist}
\item[\normalfont(i)]
 If the family $\{\Psi_\omega^\stwiddle M\}_{\omega\in \Omega}$ is jointly surjective, then $\Omega\circ \End(\MT)$ separates the structure $\MT^\flat$.

\item[\normalfont(ii)] 
Assume that $\Omega\circ \End(\MT)$ separates the structure~$\MT^\flat$. Then, for each $\X\in \CX$, the joint image of the family $\{\Psi_\omega^\X\}_{\omega\in \Omega}$ separates the structure $\X^\flat$.
\end{rmlist}
\item
Assume that $\NT$ fully dualises $\N$ with $\N$ is injective in $\CB$.
Assume that the set $\Omega\circ \End(\MT)$ separates the structure~$\MT^\flat$, and
that one of the following conditions holds:
\begin{rmlist}
\item[\normalfont (i)] $\N =\langle N; R^\nu\rangle$ is purely relational;

\item[\normalfont (ii)]  $\N =\langle N; G^\nu, R^\nu\rangle$ is a total structure, $\M$ has a structural reduct $\M^\flat$ in~$\CB$, each $\omega\in \Omega$ belongs to $\CB(\M^\flat, \N)$, and either $|\Omega| = 1$ or every operation in $G^\nu$ is unary or nullary.
\end{rmlist}
Assume that $\Omega\circ \End(\MT)$ separates the structure~$\MT^\flat$. Then, for each $\X\in \CX$, the family $\{\Psi_\omega^\X\}_{\omega\in \Omega}$  is jointly surjective.
\end{enumerate}
\end{Dlist}
\end{lemma}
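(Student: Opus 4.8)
The plan is to establish the (D) assertions and to obtain the (coD) assertions by the symmetry between the two dual adjunctions, as flagged in the overview to this section. Throughout, recall that for $\A\in\CA$ the joint image of $\{\Phi_\omega^\A\}_{\omega\in\Omega}$ is $\bigcupPhi$, and that a map of the form $\omega\circ e$, with $\omega\in\Omega$ and $e\in\End(\M)$, lies in $\CB(\M^\flat,\N)$ because $e^\flat\in\CB(\M^\flat,\M^\flat)$ and $\omega\in\CB(\M^\flat,\N)$; thus the hypothesis that $\Omega\circ\End(\M)$ separates $\M^\flat$ is well-typed.

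For (D)(1)(i) I would simply observe that $\CA(\M,\M)=\End(\M)$, so the joint image of $\{\Phi_\omega^\M\}_{\omega\in\Omega}$ is exactly $\Omega\circ\End(\M)$. Joint surjectivity says this set equals $\CB(\M^\flat,\N)=\mathrm H(\M^\flat)$, which separates the structure $\M^\flat$ since $\M^\flat\in\CB=\ISP(\N)$; hence so does $\Omega\circ\End(\M)$.

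The substance lies in (D)(1)(ii), and I would treat point- and relation-separation by a single recipe: detect a failure already at the level of $\M^\flat$ and then push it through $\Omega\circ\End(\M)$. Fix $\A\in\CA$ and write $X=\CA(\A,\M)$. For points, if $a\ne b$ then, as $X$ separates $\A$, some $x\in X$ has $x(a)\ne x(b)$. For a relation $r$ in the type of $\N$ with $(a_1,\dots,a_n)\notin r^{\A^\flat}$, I would use that $\A$ embeds into $\M^X$ via $\eta(a)(x)=x(a)$ and that ${}^\flat$ commutes with powers, so that $\eta$ embeds $\A^\flat$ into $(\M^\flat)^X$; reading the defining conjunct-atomic formula of $r$ coordinatewise then yields some $x\in X$ with $(x(a_1),\dots,x(a_n))\notin r^{\M^\flat}$. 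In either case the witnessing tuple fails the relevant condition in $\M^\flat$, so the hypothesis that $\Omega\circ\End(\M)$ separates $\M^\flat$ supplies $\omega\in\Omega$ and $e\in\End(\M)$ detecting the failure after applying $\omega\circ e$. Since $e\circ x\in X$ and $\Phi_\omega^\A(e\circ x)=\omega\circ e\circ x$ lies in the joint image, this map witnesses the required separation of $\A^\flat$. The main obstacle is precisely the relation case: point-separation is routine, whereas the relation case needs the identification $(\M^X)^\flat=(\M^\flat)^X$ together with the fact that the conjunct-atomic definitions of the relations of $\A^\flat$ are reflected by $\eta$, so that a failure in $\A^\flat$ already shows up in a single coordinate $x\in X$.

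Finally, (D)(2) assembles the pieces. Under hypothesis (i) or (ii), the Substructure Lemma~\ref{lem:everything1}(D) shows that the joint image $Z:=\bigcupPhi$ is a topologically closed substructure of $\CB(\A^\flat,\N)$. By (D)(1)(ii), $Z$ separates the structure $\A^\flat$. Since $\NT$ fully dualises $\N$ and $\NT$ is injective in $\CY$, the Density Lemma~\ref{lem:SepImpliesDense}(D), applied with $\B=\A^\flat$, tells us that the closed substructure of $\CB(\A^\flat,\N)$ generated by $Z$ is all of $\CB(\A^\flat,\N)$. As $Z$ is already a closed substructure, it equals the closed substructure it generates, so $Z=\CB(\A^\flat,\N)$; that is, $\{\Phi_\omega^\A\}_{\omega\in\Omega}$ is jointly surjective. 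The (coD) statements follow by interchanging the roles of $(\M,\N,\Phi_\omega^\A)$ and $(\MT,\NT,\Psi_\omega^\X)$, invoking the (coD) halves of the Substructure and Density Lemmas, where $\N$ injective in $\CB$ replaces $\NT$ injective in $\CY$ and the topological arguments are dropped on the now-relational side.
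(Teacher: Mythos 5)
Your proposal is correct and follows essentially the same route as the paper: (1)(i) by identifying the joint image of $\{\Phi_\omega^{\M}\}$ with $\Omega\circ\End(\M)$ and using that $\CB(\M^\flat,\N)$ separates $\M^\flat$; (1)(ii) by locating a witness $x$, post-composing with a separating map $\omega\circ e$, and noting $e\circ x\in\CA(\A,\M)$; and (2) by assembling (1)(ii) with the Substructure Lemma~\ref{lem:everything1} and the Density Lemma~\ref{lem:SepImpliesDense}, exactly as the paper does, with (coD) obtained by symmetry minus the topological considerations. If anything, your step producing $x\in\CA(\A,\M)$ via the embedding $\eta\colon\A\to\M^X$ and the identity $(\M^X)^\flat=(\M^\flat)^X$ is slightly more careful than the paper's wording, which takes $x\in\CB(\A^\flat,\M^\flat)$ yet then uses $u\circ x\in\CA(\A,\M)$ — a claim that really requires $x$ to be an $\CA$-morphism, as in your version.
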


\begin{proof}
We prove (D). The proof of (coD) is almost identical, but the finiteness of $\Omega$ is not required since there is no topology involved.

(D)(1)(i) Assume that the family
$\{\Phi_\omega^\M\}_{\omega\in \Omega}$
is jointly surjective. Assume that $\N = \langle N; G^\nu, H^\nu, R^\nu\rangle$
and let $r\in \dom(H^\nu)\cup R^\nu\cup \{\Delta_N\}$ be $n$-ary. Suppose that $a_1, \dots, a_n\in M$ with $(a_1, \dots, a_n)\notin r^{\M^\flat}$. As $\M^\flat\in \ISP(\N)$, there exists a morphism $z\in \CB(\M^\flat, \N)$ with $(z(a_1), \dots, z(a_n))\notin r^{\N}$. By assumption, there exists $\omega\in\Omega$ and $x\in \CA(\M, \M) = \End(\M)$ with $\omega \circ x = \Phi_\omega^\A(x) = z$. Hence
$\Omega \circ \End(\M)$ separates the structure $\M^\flat$.

(D)(1)(ii) Assume that $\Omega \circ \End(\M)$ separates the structure~$\M^\flat$ and assume that $\N = \langle N; G^\nu, H^\nu, R^\nu\rangle$. Let $\A\in \CA$. Let $r$ be an $n$-ary relation in $\dom(H^\nu)\cup R^\nu \cup \{\Delta_N\}$ and let $a_1, \dots, a_n\in A$ with $(a_1, \dots, a_n)\notin r^{\A^\flat}$. Since $\A^\flat\in \ISP(\M^\flat)$, there exists $x\in \CB(\A^\flat, \M^\flat)$ with $(x(a_1), \dots, x(a_n))\notin r^{\M^\flat}$. As $\Omega \circ \End(\M)$ separates the structure~$\M^\flat$, there exists $\omega\in\Omega$ and $u\in \End(\M)$ satisfying $(\omega(u(x(a_1))), \dots, \omega(u(x(a_n))))\notin r^{\N}$. As $u\in \End(\M)$, it follows that  $u\circ x\in \CA(\A, \M)$ and hence $(\Phi_\omega^\A(u\circ x)(a_1), \dots, \Phi_\omega^\A(u\circ x)(a_n))\notin r^{\N}$. Therefore the joint image of
 the family $\{\Phi_\omega^\A\}_{\omega\in \Omega}$ separates the structure $\A^\flat$.

(D)(2) follows from (D)(1) and Lemmas~\ref{lem:everything1} and~\ref{lem:SepImpliesDense}.
\end{proof}

%%%%%%%%%%%%%%%%%%%%%%%%%%%%%%%%%%%%%%%%%%%%%%%%%%%%%%%%%%
We  have now assembled all the components for the proof of the Piggyback Duality Theorem~\ref{thm:pig-general}. That of the Piggyback Co-Duality Theorem~\ref{thm:copig-general} is a simple modification.

\begin{proof}[\textbf{Proof of Theorem~{\upshape \ref{thm:pig-general}}}]
Assume that the Basic Assumptions are in force and that Conditions (0)--(3) in the statement of the theorem hold. For ease of reference we list these again here.
Recall that $\NT = \langle N;G^\nu,R^\nu,\Tp\rangle$. 
\begin{enumerate}[\normalfont (1)]

\item[\normalfont(0)]
$\Omega$ is finite and each $\omega\in \Omega$ is continuous with respect to the topologies on $\MT$ and $\NT$.

\item
$\Omega\circ \Clo_1(\MT)$ separates the structure~$\M^\flat$.

\item 
One of the following conditions holds:

\begin{rmlist}
\item[\normalfont(i)]  
$\NT$ is purely relational, or

\item[\normalfont(ii)]
$\MT$ has a structural reduct $\MT^\flat$ in~$\CY$, each $\omega\in \Omega$ belongs to $\CY(\MT^\flat, \NT)$, and every operation in $G^\nu$ is unary or nullary.
\end{rmlist}

\item
\begin{rmlist}
 \item[\normalfont (i)] 
The structure $\MT$ entails every relation in $\Omegamax {\M} r$, for each relation $r\in R^\nu$, and 

 \item[\normalfont (ii)]  
$\MT$ entails every relation in $\Omegamax {\M}{\Delta_N}$.
\end{rmlist}

\end{enumerate}

We now give the proof. Let $\A\in \CA$ and let $\alpha \colon \CA(\A, \M) \to \MT$ be an $\CX$-morphism, that is, $\alpha\in \ED\A$. We must prove that there exists $a\in A$ with $\esubA (a) = \alpha$. Since each $\omega\in \Omega$ is continuous, 
Conditions~(1) and~(2) ensure that the family $\{\Phi_{\omega}^\A\}_{\omega \in \Omega}$ is jointly surjective by the Joint Surjectivity Lemma~ \ref{lem:everything2}(D)(2). Condition (3)(ii) now ensures the existence of a map $d_\alpha \colon \CB(\A^\flat, \N) \to N$ satisfying $d_\alpha \circ \Phi_\omega^\A = \omega \circ \alpha$, for all $\omega\in \Omega$, by the Existence Lemma~\ref{lem:everything4}(D). Conditions (3)(i) and (0) allow us to invoke the
Relation Preservation Lemma~\ref{lem:everything5}(D) and the Continuity Lemma~\ref{lem:PhiContinuous}(2) to conclude that $d_\alpha$  preserves the relations in~$R^\nu$ and is continuous. If (2)(i) holds we deduce immediately that
 $d_\alpha$ is a $\CY $-morphism. To obtain the same conclusion when (2)(ii) holds we must confirm  also that $d_\alpha$ preserves the operations in $G^\nu$.
For this we can invoke the Operation Preservation Lemma~\ref{lem:pres3}(D).

Now we have a map $d_{-}\colon \ED\A \to \mathrm{KH}(\A^\flat)$, which, by the Commuting Triangle Lemma~\ref{lem:everything3}(2), satisfies
$d_{\esub{\scriptscriptstyle \A} (a)}  = k_{\A^\flat}(a)$, for all $a\in A$. Since $\NT$ dualises $\N$, the map $k_{\A^\flat} \colon \A^\flat \to \mathrm{KH}(\A^\flat)$ is surjective. So there exists $a\in A$ with $k_{\A^\flat}(a) = d_\alpha$. Hence $d_{\esub{\scriptscriptstyle \A} (a)} = k_{\A^\flat}(a) = d_\alpha$. 
Condition (1) guarantees that $d_{-} \colon \ED\A \to \mathrm{KH}(\A^\flat)$ is one-to-one, by Lemma~\ref{lem:everything3}(1). Hence~$\esubA (a) = \alpha$.

We have now established the version of the Piggyback Duality Theorem
which is not specialised to the single-carrier case.  For the proof for 
the single carrier case, that is, the proof of Theorem~\ref{thm:pig-simple}, we note that  essentially the same arguments as above apply when the conditions are stated in the simplified form which is possible when $|\Omega|=1$.
\end{proof}

%%%%%%%%%%%%%%%%%%%%%%%%%%%%%%%%%%%%%%%%%%%%%%%%%%%%%%%%%%

We imposed sufficient conditions  above to guarantee joint surjectivity.  The following lemma indicates that we may, in suitable circumstances, be able to circumvent the need for joint surjectivity. This opens the way to the possibility of  pushing  through the piggyback constructions  in the absence of joint surjectivity,  subject to specified conditions being met---see~\cite[3.8]{DW85} for an application of the Extended Commuting Triangle Lemma to finite pseudocomplemented semilattices.
Since the examples considered in this paper are covered by the results stated in Section~\ref{sec:results}, we do not  pursue this topic further in this paper.

\begin{lemma}[Extended Commuting Triangle Lemma]
\label{lem:everything3g}

Let $\M$ and $\N$ be structures and let $\MT$ and $\NT$ be alter egos of $\M$ and $\N$, respectively. Define $\CA\coloneqq \ISP(\M)$, $\CB \coloneqq \ISP(\N)$, $\CX\coloneqq \IScP(\MT)$ and $\CY \coloneqq  \IScP(\NT)$.

\begin{Dlist}

\item[ \normalfont(D)] 
Assume that $\M$ has a structural reduct $\M^\flat$ in $\CB$ and let $\Omega$ be a subset of $ \CB(\M^\flat, \N)$.  Let $\A\in \CA$ and assume that for every morphism $\alpha\colon \CA(\A, \M) \to \MT$ there is a map $d_\alpha \colon \bigcupPhi \to N$
such that $d_\alpha \circ \Phi_\omega^\A = \omega \circ \alpha$, for all $\omega\in \Omega$ {\upshape(}as in Figure~{\upshape\ref{fig:pig-fig2}}{\upshape)}.

\begin{enumerate}[\normalfont(1)]

\item 
If $\Omega\circ \Clo_1(\MT)$ separates the points of~$M$, then the function $\alpha \mapsto d_\alpha$ is one-to-one.

\item
Assume that 
\begin{rmlist}
\item[\normalfont(i)]
the closed substructure of $\CB(\A^\flat, \N)$ generated by the joint image of the family $\{\Phi_\omega^\A\}_{\omega\in \Omega}$ is $\CB(\A^\flat, \N)$, and

\item[\normalfont(ii)]
$d_\alpha$ extends to a $\CY$-morphism $\widehat d_\alpha \colon \CB(\A^\flat, \N) \to \NT$.
\end{rmlist}
Then $\widehat d_{\esub {\scriptscriptstyle \A} (a)} = k_{\A^\flat}(a)$, for all $a\in A$.
\end{enumerate}

\item[\normalfont (coD)] Assume that $\MT$ has a structural reduct $\MT^\flat$ in $\CY$ and let $\Omega$ be a subset of $ \CY(\MT^\flat, \NT)$.
Let $\X\in \CX$ and assume that for every morphism $u\colon \CX(\X, \MT) \to \M$ there is a map $d_u \colon \bigcupPsi \to N$ such that $d_u \circ \Psi_\omega^\X
= \omega \circ u$, for all $\omega\in \Omega$.

\begin{enumerate}[\normalfont(1)]
\item 
If
$\Omega\circ \Clo_1(\M)$ separates the points of~$M$, then the function $u \mapsto d_u$ is one-to-one.

\item
Assume that 
\begin{rmlist}
\item[\normalfont (i)]
 the substructure of $\CY(\X^\flat, \NT)$ generated by the joint image of the family $\{\Psi_\omega^\X\}_{\omega\in \Omega}$ is $\CY(\X^\flat, \NT)$, and
 
\item[\normalfont (ii)]
 $d_u$ extends to a\/ $\CB$-morphism ${\widehat d_u \colon \CY(\X^\flat, \NT) \to \N}$.
\end{rmlist}
Then $\widehat d_{\epsub{\scriptscriptstyle \X}(x)}  = \kappa_{\X^\flat}(x)$, for all $x\in X$.
\end{enumerate}

\end{Dlist}
\end{lemma}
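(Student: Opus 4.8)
The plan is to prove the (D) assertions in full and obtain (coD) by the same argument with the topology suppressed, exactly as was done for Lemma~\ref{lem:everything3}. Part~(1) requires no new work: its hypothesis and conclusion are identical to those of Lemma~\ref{lem:everything3}(D)(1), and the injectivity argument there uses only the defining identity $d_\alpha \circ \Phi_\omega^\A = \omega \circ \alpha$ on the joint image. Since that computation never leaves the joint image, it applies verbatim, so I would simply invoke it.

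The substance lies in part~(2). Fix $a \in A$ and put $\alpha \coloneqq \esubA (a)$. The decisive observation is that both $\widehat d_\alpha$ and $k_{\A^\flat}(a)$ are \emph{$\CY$-morphisms} from $\CB(\A^\flat, \N)$ to $\NT$: the former by hypothesis~(D)(2)(ii), and the latter because $k_{\A^\flat}(a)$ lies in $\mathrm{KH}(\A^\flat) = \CY(\CB(\A^\flat,\N), \NT)$. My first step is to check that these two morphisms agree on the joint image. For $z = \Phi_\omega^\A(x)$ with $\omega\in\Omega$ and $x\in\CA(\A,\M)$, the fact that $\widehat d_\alpha$ extends $d_\alpha$ lets me recycle the calculation from the proof of Lemma~\ref{lem:everything3}(D)(2):
\[
\widehat d_\alpha(z) = d_\alpha(\Phi_\omega^\A(x)) = \omega(\esubA (a)(x)) = \omega(x(a)) = k_{\A^\flat}(a)(\omega\circ x) = k_{\A^\flat}(a)(z).
\]

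The second and key step is a standard equalizer argument, which is what replaces the joint-surjectivity hypothesis of the earlier lemma. The set on which two $\CY$-morphisms into $\NT$ coincide is a \emph{topologically closed substructure} of their common domain: it is a substructure because both maps preserve every operation and relation of $\NT$ (so it is closed under the total operations in $G^\nu$), and it is topologically closed because both maps are continuous and $\NT$ is Hausdorff. By the first step this equalizer contains the joint image $\bigcupPhi$, and being a closed substructure it therefore contains the closed substructure the joint image generates, which by hypothesis~(D)(2)(i) is the whole of $\CB(\A^\flat,\N)$. Hence $\widehat d_\alpha = k_{\A^\flat}(a)$ everywhere, as required.

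For the (coD) statement the identical argument applies, with $\CB$-morphisms into $\N$ in place of $\CY$-morphisms into $\NT$; since $\N$ carries no topology, the equalizer is merely a substructure (no closedness is needed), and hypothesis~(coD)(2)(i) asks only that the joint image generate $\CY(\X^\flat,\NT)$ as a substructure. The main obstacle, such as it is, is simply recognising that the equalizer of two $\CY$-morphisms is a closed substructure; once this is in hand, conditions~(i) and~(ii) do precisely the work that joint surjectivity did in Lemma~\ref{lem:everything3}, and everything else is the earlier calculation reused.
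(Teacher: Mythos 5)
Your proposal is correct and follows essentially the same route as the paper's proof: reduce to the agreement of the two $\CY$-morphisms $\widehat d_{\esub{\scriptscriptstyle \A}(a)}$ and $k_{\A^\flat}(a)$ on the generating set $\bigcupPhi$, verify that agreement by the calculation already given for Lemma~\ref{lem:everything3}(D)(2), and conclude via (D)(2)(i). The only difference is that you spell out the equalizer argument (the set where two $\CY$-morphisms into $\NT$ agree is a topologically closed substructure) which the paper leaves implicit in the phrase ``it suffices to prove that they agree on the generating set''; this is a harmless and indeed welcome amplification.
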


\begin{proof}
The differences between the Commuting Triangle Lemma~\ref{lem:everything3}  and this lemma lie in the weakened assumptions in (D)(2) and (coD)(2).
We consider (D)(2) only. We must prove that $\widehat d_{\esub{\scriptscriptstyle \A} (a)} = k_{\A^\flat}(a)$, for all $a\in A$. As $\widehat d_{\esub{\scriptscriptstyle \A} (a)}$ and $k_{\A^\flat}(a)$ are $\CY$-morphisms
from $\CB(\A^\flat, \N)$ to $\NT$, it suffices to prove that they agree on the generating set $Z\coloneqq  \bigcupPhi$. As $\widehat d_{\esub{\scriptscriptstyle \A} (a)}$ extends~$d_{\esub{\scriptscriptstyle \A} (a)}$, this is immediate  from the proof given for Lemma \ref{lem:everything3}(D)(2).
\end{proof}

%%%%%%%%%%%%%%%%%%%%%%%%%%%%%%%%%%%%%%%%%%%%%%%%%%%
We now turn to strong dualities. Our final lemma shows that when we impose sufficient conditions for $\MT$ to fully dualise $\M$, the resulting duality will in fact be strong.

\begin{lemma}[Injectivity Lemma] \label{lem:inj}
Let $\M$ and $\N$ be structures and let $\MT$ and $\NT$ be alter egos of $\M$ and $\N$, respectively. Let  $\CA\coloneqq \ISP(\M)$, $\CB \coloneqq \ISP(\N)$, $\CX\coloneqq \IScP(\MT)$ and $\CY \coloneqq  \IScP(\NT)$.

\begin{Dlist}
\item[\normalfont (D)] 
Assume that $\M$ has a structural reduct $\M^\flat$ in $\CB$, let $\omega\in \CB(\M^\flat, \N)$ and assume that $\Phi_\omega^\A \colon \CA(\A, \M) \to \CB(\A^\flat, \N)$ is a bijection for all $\A\in \CA$. If $\N$ is injective in $\CB$, then $\M$ is injective in $\CA$.

\item[\normalfont (coD)]  
Assume that $\MT$ has a structural reduct $\MT^\flat$ in $\CY$, let $\omega\in \CY(\MT^\flat, \NT)$ and assume that $\Psi_\omega^\X \colon \CX(\X, \MT) \to \CY(\X^\flat, \NT)$ is a bijection for all $\X\in \CX$. If $\NT$ is injective in $\CY$, then $\MT$ is injective in $\CX$.

\end{Dlist}
\end{lemma}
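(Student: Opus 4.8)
The plan is to prove the (D) statement; the (coD) statement will follow by the same argument with $\M,\N,\CB$ replaced by $\MT,\NT,\CY$ and with ``embedding'' read in its topological sense. Recall that $\M$ is injective in $\CA$ with respect to embeddings precisely when, for every embedding $m\colon \A\to \B$ in $\CA$, the map $\D{m} = {-}\circ m\colon \CA(\B,\M)\to \CA(\A,\M)$ is surjective; likewise $\N$ is injective in $\CB$ precisely when $\mathrm H(n)={-}\circ n$ is surjective for every embedding $n$ in $\CB$. By hypothesis each component of $\Phi_\omega^{-}$ is a bijection, and, as already noted, $\Phi_\omega^{-}$ is a natural transformation from $\CA(-,\M)$ to $\CB((-)^\flat,\N)$; hence $\Phi_\omega^{-}$ is a natural \emph{isomorphism} between these two set-valued functors. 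The strategy is to transport the surjectivity supplied by injectivity of $\N$ across this natural isomorphism.

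First I would fix an embedding $m\colon \A\to \B$ in $\CA$ and form its reduct $m^\flat\colon \A^\flat\to \B^\flat$, which has the same underlying map as $m$. Naturality of $\Phi_\omega^{-}$ applied to $m$ yields the commuting square
\[
\Phi_\omega^\A\circ \D{m} = \mathrm H(m^\flat)\circ \Phi_\omega^\B,
\]
where $\mathrm H(m^\flat) = {-}\circ m^\flat\colon \CB(\B^\flat,\N)\to\CB(\A^\flat,\N)$. Since $\Phi_\omega^\A$ and $\Phi_\omega^\B$ are bijections, this identity shows at once that $\D{m}$ is surjective if and only if $\mathrm H(m^\flat)$ is surjective. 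Thus it remains only to derive the surjectivity of $\mathrm H(m^\flat)$ from the injectivity of~$\N$.

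The main obstacle, and indeed the only non-routine point, is to verify that $m^\flat$ is an embedding in $\CB$, so that injectivity of $\N$ in $\CB$ may legitimately be invoked. Here I would use that each relation in $\dom(H^\nu)\cup R^\nu$ of $\A^\flat$ (respectively $\B^\flat$) is conjunct-atomic definable from $\A$ (respectively $\B$) by the \emph{same} formulas, namely those defining $\M^\flat$ from $\M$. Because $m$ is an embedding it preserves and reflects every atomic formula in the language of $\M$, and therefore preserves and reflects each such conjunct-atomic relation; together with injectivity of the underlying map, this makes $m^\flat$ an embedding in $\CB$. Injectivity of $\N$ then gives that $\mathrm H(m^\flat)$ is surjective, whence $\D{m}$ is surjective and $\M$ is injective in $\CA$.

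For (coD) the argument is identical, using instead the natural isomorphism $\Psi_\omega^{-}\colon \CX(-,\MT)\to\CY((-)^\flat,\NT)$ supplied by the bijectivity of the $\Psi_\omega^\X$, the injectivity of $\NT$ in $\CY$, and the analogous naturality square relating $\E{\X}$-level and $\mathrm K$-level restriction maps. The only extra check is that an embedding $m$ in $\CX$ — an isomorphism onto a topologically closed substructure — has reduct $m^\flat$ that is again a closed embedding in $\CY$; this holds because $m^\flat$ has the same underlying closed map as $m$ and reflects the conjunct-atomic relations exactly as in the (D) case.
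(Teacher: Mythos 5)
Your proof is correct, but it takes a genuinely different route from the paper's. The paper proves (coD) directly as an extension problem: it takes an embedding $\varphi\colon \X\to\MT^S$ into a \emph{power} of $\MT$ (tacitly using the standard reduction that this suffices, since every $\Y\in\CX$ is a closed substructure of some $\MT^S$, so an extension to $\MT^S$ restricts to one on $\Y$), applies injectivity of $\NT$ to the $\CY$-morphism $\omega\circ u$ along $\varphi^\flat\colon \X^\flat\to(\MT^\flat)^S$ to obtain $\gamma$, lifts $\gamma$ through the \emph{surjectivity} of $\Psi_\omega^{\stwiddle M^S}$ to get $v\colon\MT^S\to\MT$, and concludes $u=v\circ\varphi$ from the \emph{injectivity} of $\Psi_\omega^\X$. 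Because the ambient object is a power, the fact that $\varphi^\flat$ is a $\CY$-embedding is immediate from $(\MT^S)^\flat=(\MT^\flat)^S$ and the pointwise description of the reduct. You instead recast injectivity as surjectivity of the hom-restriction maps and transport it across the natural isomorphism $\Phi_\omega^{-}$ (resp.\ $\Psi_\omega^{-}$); this is a clean, purely formal argument that treats an arbitrary embedding $m\colon\A\to\B$ at once, but it is paid for exactly where you say: one must check that $^\flat$ carries embeddings to embeddings (closed embeddings in the topological case), a fact the paper's power-only formulation never needs for arbitrary codomains. Your verification---that an embedding preserves and reflects atomic, hence conjunct-atomic, formulas, while the reduct operations are term functions and so are automatically preserved---is sound in the total-structure setting in which the lemma is applied, and matches the paper's syntactic/semantic description of $\A^\flat$. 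In effect you use bijectivity of $\Phi_\omega$ at \emph{both} ends of $m$, whereas the paper uses surjectivity at the power and injectivity at the subobject; the paper's decomposition minimizes the reduct-functor bookkeeping, while yours exposes the structural reason the lemma holds, namely that $\Phi_\omega^{-}$ and $\Psi_\omega^{-}$ are isomorphisms of hom-functors.
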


\begin{proof}
We shall prove only (coD). Let $\varphi\colon \X \to \MT^S$ be an embedding in~$\CX$, for some non-empty set~$S$, and let $u\colon \X\to \MT$ be an $\CX$-morphism. We must find an $\CX$-morphism $v\colon \MT^S \to \MT$ such that $u = v\circ \varphi$. As $\varphi^\flat \colon \X^\flat \to (\MT^S)^\flat = (\MT^\flat)^S$ is a $\CY$-embedding and $\omega \circ u^\flat \colon \X^\flat \to \NT$ is a $\CY$-morphism, the injectivity of $\NT$ in $\CY$ guarantees the existence of a $\CY$-morphism $\gamma \colon (\MT^\flat)^S \to \NT$ with $\omega \circ u = \gamma \circ \varphi$. As $\Psi_\omega^{\stwiddle M^S} \colon \CX(\MT^S, \MT) \to \CY((\MT^\flat)^S, \NT)$ is surjective, there exists an $\CX$-morphism $v\colon \MT^S \to \MT$ with $\omega \circ v = \gamma$. Since $u, v \circ \varphi\in \CX(\X, \MT)$, we can write
\[
\Psi_\omega^\X(u) = \omega \circ u = \gamma \circ \varphi = \omega \circ (v \circ \varphi) = \Psi_\omega^\X(v \circ \varphi).
\]
As $\Psi_\omega^\X$ is one-to-one, it follows that $u = v\circ \varphi$, as required.
\end{proof}

Now we present the proof of the  Piggyback Strong Duality Theorem~\ref{thm:pigstrong1}.

\begin{proof}[\textbf{Proof of Theorem~{\upshape \ref{thm:pigstrong1}}}]
(a)
Since (III) is a special case of both (I) and (II), it suffices to consider the assumptions in (I) and (II). Both of these sets of assumptions guarantee that Conditions (0), (1), (2)(ii) and (3)(ii)(b) of the Single-carrier Piggy\-back Duality Theorem~\ref{thm:pig-simple} hold, and that Conditions~(1), (2)(ii) and (3)(ii)(b) of the Single-carrier Piggyback Co-duality Theorem~\ref{thm:copig-simple} hold. Hence $\MT$ fully dualises~$\M$.

Part~(b) will follow immediately from the Injectivity Lemma~\ref{lem:inj} once we have proved Part (c). We shall prove (c) under the assumption that (I) holds.
By the Morphism Lemma~\ref{lem:PhiMorphism}, $\Psi_\omega^\X$ is a $\CY$-morphism, and by the Existence Lemma~\ref{lem:everything4}(coD)(1)(ii) and the Joint Surjectivity Lemma~\ref{lem:everything2}(coD)(2) (in which~(ii) is satisfied),
$\Psi_\omega^\X$ is one-to-one and onto. Since $\N$ is a total algebra, it follows that $\Psi_\omega^\X$ is an isomorphism between $\CX(\X,\MT)^\flat = \E\X^\flat$ and $\CY(\X^\flat, \NT) = \mathrm K(\X^\flat)$.

We can use the duality and the fact that $\E\X^\flat\cong \mathrm K(\X^\flat)$ to prove that $\D\A^\flat \cong \mathrm H(\A^\flat)$ as follows: since $\E\X^\flat\cong \mathrm K(\X^\flat)$, for all $\X\in \CX$, we have $\E{\D\A}^\flat \cong \mathrm K(\D\A^\flat)$, and so
\[
\D\A^\flat \cong \mathrm H(\mathrm K(\D\A^\flat)) \cong
\mathrm H(\E{\D\A}^\flat) \cong \mathrm H(\A^\flat).
\]
Alternatively, we can prove directly that $\Phi_\omega^\A \colon \CA(\A, \M)^\flat \to \CB(\A^\flat, \N)$ is an isomorphism in $\CY$ as follows.
By the Morphism Lemma~\ref{lem:PhiMorphism}, $\Phi_\omega^\A$ is a $\CY$-homomorphism, and by the Existence Lemma~\ref{lem:everything4}(D)(1)(ii)
and the Joint Surjectivity Lemma~\ref{lem:everything2}(D)(2) (in which~(ii)
is satisfied), $\Phi_\omega^\A$ is one-to-one and onto. It remains to show that, for each ($n$-ary) relation $r\in R^\nu$,
\[
(x_1, \dots, x_n)\notin r \text{ in } \CA(\A, \M)^\flat\implies(\Phi_\omega^\A(x_1), \dots, \Phi_\omega^\A(x_n))\notin r \text{ in } \CY(\A^\flat, \N).
\]
Let $x_1, \dots, x_n\in \CA(\A, \M)$ with $(x_1, \dots, x_n)\notin r$ in $\CA(\A, \M)^\flat$. Thus there exists $a\in A$ with $(x_1(a), \dots, x_n(a))\notin r$ in $\MT^\flat$. As $\omega\circ \Clo_1(\M)$ separates the structure~$\MT^\flat$, there exists $t\in \Clo_1(\M)$ with $\bigl(\omega(t(x_1(a))), \dots, \omega(t(x_n(a)))\bigr)\notin r$ in $\NT$.
Since $x_1, \dots, x_n$ preserve $t$ it follows that
\begin{multline*}
\bigl(\Phi_\omega^\A(x_1)(t(a)), \dots, \Phi_\omega^\A(x_n)(t(a))\bigr) = \bigl((\omega\circ x_1)(t(a)), \dots, (\omega\circ x_n)(t(a))\bigr)\\
= \bigl(\omega(x_1(t(a))), \dots, \omega(x_n(t(a)))\bigr)
= \bigl(\omega(t(x_1(a))), \dots, \omega(t(x_n(a)))\bigr)\notin r,
\end{multline*}
and so $(\Phi_\omega^\A(x_1), \dots, \Phi_\omega^\A(x_n))\notin r$ in $\CY(\A^\flat, \N)$. Hence (c) holds.
\end{proof}

\subsection*{Acknowledgement} The authors would like to thank
Leonardo  Cabrer for his careful reading of several versions of this paper. His well-targetted comments pinpointed some initial obscurities and have
also helped us  make  the end product much more user-friendly.

%%%%%%%%%%%%%%%%%%%%%%%%%%%%%%%%%%%%%%%%%%%%%%%%%%%%%%%%%%%%%%%%%%%
%%  BIBLIOGRAPHY  %%
%%%%%%%%%%%%%%%%%%%%%%%%%%%%%%%%%%%%%%%%%%%%%%%%%%%%%%%%%%%%%%%%%%%

\end{document}